\newcolumntype{C}{>{\centering\arraybackslash}X}
\definecolor{purple}{rgb}{0.59, 0.44, 0.84}
\newcommand{\ft}[1]{{\color{cyan}  #1}}
\newcommand{\psa}[1]{{\color{brown}  #1}}
\theoremstyle{plain}
\newtheorem{theorem}{Theorem}[section]
\newtheorem*{theorem*}{Theorem}
\newtheorem{corollary}[theorem]{Corollary}
\newtheorem*{corollary*}{Corollary}
\newtheorem{lemma}[theorem]{Lemma}
\newtheorem*{lemma*}{Lemma}
\newtheorem*{sublemma*}{}
\newtheorem{proposition}[theorem]{Proposition}
\newtheorem*{proposition*}{Proposition}
\newtheorem*{conjecture*}{Conjecture}
\newtheorem*{noname*}{}
\theoremstyle{definition}
\newtheorem{definition}[theorem]{Definition}
\newtheorem*{definition*}{Definition}
\newtheorem*{observation*}{Observation}
\newtheorem{example}[theorem]{Example}
\newtheorem*{example*}{Example}
\newtheorem*{exercise*}{Exercise}
\newtheorem*{notation*}{Notation}
\newtheorem{remark}[theorem]{Remark}
\newtheorem*{remark*}{Remark}
\numberwithin{equation}{section}
\def \l {\lambda}
\def\C{\mathbb C}
\def\G{\mathbf G}
\def\wp{\mathfrak{p}}
\def \Ind {\textnormal{Ind}}
\def\Tr{{\rm Tr}}
\def\Gal{{\rm Gal}}
\def\Frob{{\rm Frob}}
\def\F{{\mathbb F}}
\def\Z{{\mathbb Z}}
\def\O{\mathcal O}
\def\G{\Gamma}
\def\Im{\mathrm{Im}\,}
\def\Re{\mathrm{Re}\,}
\def\SL{\mathrm{SL}}
\def\N{\mathbb N}
\def\R{\mathbb{R}}
\def\Q{\mathbb{Q}}
\def\H{\mathbb{H}}
\def\O{\mathcal{O}}
\def\H{\mathbb H}
\def \ol{\overline}
\newcommand*\HYPERskip{&}
\newcommand*\pFq{
\begingroup
\catcode`\,\active
\def ,{\HYPERskip}%
\doHyper
}
\def\doHyper#1#2#3#4#5{%
\, _{#1}F_{#2}\left[\begin{matrix}#3 \smallskip \\  #4\end{matrix} \; ; \; #5\right]%
\endgroup
}
\def \pfq{\pFq}
\def\gen#1{\langle #1\rangle}
\def\Im{\mathrm{Im}\,}
\def\SL{\mathrm{SL}}
\def\N{\mathbb N}
\def \HD{\mathit{HD}}
\begin{document}

\title{Special $L$-values of certain CM weight three Hecke eigenforms}

\author{Paresh Arora, Koustav Mondal, Akio Nakagawa, Fang-Ting Tu}
\begin{abstract}
 
Ramanujan’s theory of elliptic functions to alternative bases  connects modular forms with hypergeometric series and has led to applications such as the modularity of certain hypergeometric Galois representations.  In this paper, we relate special values of $L$-functions of certain CM  Hecke eigenforms to Ramanujan’s alternative bases via the modularity of hypergeometric Galois representations associated with hypergeometric series $\pFq32{\frac12&\frac 1d&\frac{d-1}d}{&1&1}t$, $d=2$, $3$, $4$, and $6$,  arising from tensor products of CM elliptic curves over real quadratic fields. We also give a complete classification of these type of hypergeometric Galois representations.
 
\end{abstract}
\keywords{Modular forms, $L$-values, Galois representations, hypergeometric functions, complex multiplication}
\subjclass[2020]{11F11, 11F67, 11F80, 11G15, 11M38, 33C05, 33C20}
\maketitle
\tableofcontents 

\section{Introduction} \label{sec:intro}

Special values of $L$-functions play a central role in modern number theory. They encode arithmetic information and serve as a bridge between analysis, algebra, and geometry, as illustrated by the Birch and Swinnerton–Dyer conjecture and its generalizations, such as the Bloch–Kato and Be\u{\i}linson–Bloch conjectures \cite[et al.]{Beilinson, Beilinson-Ktheory, Bloch, BlochKato, BSD1, BSD2}. These conjectures relate the analytic behavior of $L$-functions to arithmetic invariants, including ranks of elliptic curves and Selmer groups. Deligne’s conjecture further predicts precise algebraic properties of  special $L$-values attached to modular forms \cite{Deligne1979}. Such values are also closely connected to Bernoulli numbers, quadratic forms, and zeta functions of real quadratic fields, beginning with the work of Eichler and Shimura and further developed in the theory of Hilbert modular forms \cite[et al.]{choie2021, eichler,  kohnen_zagier_halfintegral, kohnen, Shimura-introduction}.

Let $f$ be a cusp form of weight $k$  with Fourier expansion $f=\displaystyle \sum_{n\geq 1} a_nq^n$. Its $L$-series 
$$
   L(f,s)\coloneqq\sum_{n\geq 1} \frac{a_n}{n^s}
$$
extends analytically to the complex plane via the Mellin transform and satisfies a functional equation relating $s$ to $k-s$. The critical values occur at integers   $1\leq m\leq k-1$.  By the Mellin transform, the special values admit explicit integral expressions,  
\begin{equation}
    L(f,m)=\frac{(2\pi)^{m}}{(m-1)!}\int_0^{\infty}f(it)t^{m-1}dt,
\end{equation}
and are often referred to as periods of the form $f$. Deligne's conjecture  asserts that there exist complex periods $c_f^{\pm}$ such that
$\frac{L(f,m)}{(2\pi i)^mc_f^{\pm}}\in \Q(a_n)$. The numbers $c_f^{\pm}$ arise from the comparison between Betti and de Rham realizations of the weight $k-1$ motive attached to $f$.  From the perspective of the Bloch–Kato conjecture, the critical values $L(f,k-1)$ encodes arithmetic information about the associated motive and Galois representation. 
When $f$ has complex multiplication (CM) by an imaginary quadratic field $K=\Q(\sqrt{d_K})$ with $d_K$ a fundamental discriminant, results of Shimura and Blasius \cite{shimuraperiod, blasius} show that one may choose canonical periods $c_f^{\pm}$ that can be written in terms of  Gauss sums and the Chowla-Selberg period $\Omega_{d_K}$  \cite{ChowlaSelberg, Zagier-modularform}.  The period $\Omega_{d_K}$ is given by 
$$
\Omega_{d_K}=\sqrt{\pi}\prod_{j=1}^{|d_K|-1}\G\left(\frac j{|d_K|}\right)^{\chi_{K}(j)w_K/4h_K},
$$
where $w_K$ is the order of the unit group in $K$, $\chi_{K}$ is the Kronecker character associated with $K$,  $h_K$ is the class number of $K$, and $\G(\cdot)$ is the gamma function.  

As a classical  example, the modularity theorem associates to every elliptic curve $E/\Q$ a weight~two Hecke eigenform $f_E$ such that 
\(
L(E/\Q, s) = L(f_E, s).
\)
Moreover, the value $L(f_E,1)=L(E,1)$ is  a rational multiple of the real period of $E$. When $E$ has CM  by an imaginary quadratic field~$K$, its $L$-function can be described more explicitly in terms of a Hecke character $\psi_E $ of $K$, which defines a one-dimensional Galois representation of $\Gal(\overline{K}/K)$. The Hecke eigenform $f_E$  arises via the induction of this representation to $G_\Q:=\Gal(\overline{\Q}/\Q)$\cite{DeligneSerre1974}. This perspective naturally leads to higher-weight Hecke eigenforms obtained from powers of Hecke characters.  More generally, one expects that $L$-functions arising from the  \'etale cohomology of higher-dimensional varieties defined over $\Q$  are modular.
In particular, certain $K3$ surfaces defined over~$\Q$ are known to be modular and correspond to weight~$3$ modular forms, see  \cite{AOP, voight2025s, elkies2013modularformsk3surfaces}  for example.

\subsection{Main results}
In this paper, we compute special $L$-values attached to a class of weight $3$ CM  Hecke eigenforms arising from hypergeometric motives. Our approach combines Ramanujan’s theory of elliptic functions to alternative bases with the modularity of CM elliptic curves over real quadratic fields.  Ramanujan’s alternative bases relate modular forms to hypergeometric series, have led to  many applications, including Ramanujan $1/\pi$-series, and have more recently played a role in the study of certain \emph{hypergeometric Galois representations} \cite[et al.]{BCM, DPVZ,  HLLT, Katz, Katz96,  LLL-low}, which are expected to be modular or automorphic 
according to the Langlands philosophy \cite{LaurentLangland, Langlands, NewtonLanglandsModularity}.   Our focus is on the representations of $G_\Q$ arising from hypergeometric type $K3$ surfaces \cite{AOP, HGK3}, and  tensor products $ \widetilde{E}_d(z)\otimes\widetilde{E}_d(1-z)$ from the following  families of elliptic curves
 \begin{align}\label{eq:hgellipticcurves}
      \widetilde{E}_2(z)&\colon y^2=x(1-x)(x-z), \qquad      
      \widetilde{E}_3(z)\colon y^2+xy+\frac{z}{27}y=x^3,\\
      \widetilde{E}_4(z)&\colon y^2=x\left(x^2+x+\frac{z}{4}\right), \qquad   
      \widetilde{E}_6(z)\colon y^2+xy=x^3-\frac{z}{432}.
  \end{align}
  
By combining Ramanujan’s alternative bases, moduli interpretations for modular curves, the modularity of  CM elliptic curves over real quadratic fields, and the relation between $ \widetilde{E}_d(z)$ and $ \widetilde{E}_d(1-z)$, we obtain explicitly special  $L$-values of the associated weight $3$ Hecke eigenforms in terms of hypergeometric evaluations.  The geometric source of these forms suggests that their periods factor as norms of CM periods, while the hypergeometric realization predicts expressions in terms of special hypergeometric values.  Conceptually, the periods of the weight $3$ form  coincides with a hypergeometric period, and  Clausen formulas identify the  tensor square with a rigid hypergeometric motive. Our result makes this relationship explicit and builds on earlier work \cite{SRiM, damerell1, damerell2,  LLL-low}.  It is explicitly stated below.  See \cite[et al.]{Beukers, Beukers-MF, Ito-Jacobi, mccarthy-period, Otsubo-HeckeL,  ENRosenK1, Zudilin-arithmetic, Zudilin-periods} for relevant discussions.

\begin{theorem}\label{thm:modularity} 
Let $t \in \Q$ be such that $z=\frac{1-\sqrt{1-t}}2\in \R$. Fix $d\in\{2,3,4,6\}$.  If the elliptic curve $\widetilde E_d\left(z\right)$ given in  \zcref{eq:hgellipticcurves} has complex multiplication,
then  there exists a  unique  normalized Hecke eigenform $f_{d,t}$ of weight $3$, 
for which we have an explicit number $C_{d,t} \in \Q$  such that 
\[
C_{d,t}\cdot L\!\left(f_{d,t},2\right)
= \,\pi^{2}\;
{}_3F_2\!\left[\begin{matrix}\tfrac{1}{2}&\tfrac{1}{d}&1-\tfrac{1}{d}\\[4pt]&1&1\end{matrix};\,t\right] \coloneqq  \,\pi^{2}\;\sum_{n\geq 0}\frac{\left(\frac 12\right)_n\left(\frac 1d\right)_n\left(\frac {d-1}d\right)_n}{n!^3} t^n,
\]
where $(a)_0=1$ and $(a)_n=a(a+1)\cdots(a+n-1)$. A complete list of such pairs $(d,t)$ 
is provided in \zcref{tab:main} and \zcref{append:all}, as well as the values $C_{d,t}$ and $ L\!\left(f_{d,t},2\right)$. The associated modular forms $f_{d,t}$  are identified by their LMFDB  labels \cite{LMFDB}. 
\end{theorem}

For each $(d,t)$ satisfying the assumptions of \zcref{thm:modularity}, it corresponds to an $1/\pi$-series 
    \begin{equation}
        \sum_{n\geq 0}\frac{\left(\frac 12\right)_n\left(\frac 1d\right)_n\left(\frac {d-1}d\right)_n}{n!^3} (A_{d,t}n+1)t^n=\frac{\delta_{d,t}}{\pi},
    \end{equation}
    for suitable algebraic numbers $A_{d,t},\delta_{d,t}$ (see \cite[et al.]{J-P_Borwein,D-G_Chudnovsky, Ramanujan1/pi}), which reflects the Legendre relation from the curve $\widetilde{E}_d(z)$.
For $(d,t)=(2,1/64)$,   the corresponding Ramanujan $1/\pi$-series is 
$$
\sum_{n\geq 0}\left(\frac{42}5n+1\right)\frac{\left(\frac 12\right)_n^3}{n!^3} \left(\frac 1{64}\right)^n = \frac{16}5\cdot \frac 1\pi. 
$$
The associated modular form is $f_{112.3.c.a}$  and we obtain the special value
$$
8L(f_{112.3.c.a},2) = \pi^{2}\;
\pFq32{\frac 12&\frac 12&\frac12}{&1&1}{\frac 1{64}} = \frac 27 \Omega_{-7}^2. 
$$
See also the  work of McCarthy \cite{mccarthy-period}. 

\begin{remark}\label{remark:main}
The numbers $t$ and $C_{d,t}$ appearing in \zcref{thm:modularity} 
arise as values of suitable modular functions evaluated at CM points. 
For $d=2,3,4,$ and $6$, the values $t$ correspond to the pairs of elliptic curves  $(\widetilde E_d(z), \widetilde E_d(1-z))$ and can be interpreted as CM-values  of modular curves arising from  $\Gamma_0(2)$, $\Gamma_0(3)^+$, $\Gamma_0(2)^+$, and $\Gamma_0(1)\coloneqq\SL_2(\Z)$, respectively (see \zcref{sec:modofhgr} or \zcref{tab:haupymodul}). These groups are arithmetic triangle groups commensurable with $\SL_2(\Z)$ \cite{Takeuchi-triangle, Takeuchi-classify}. 
For a fixed $d$, the number $t$ is uniquely determined by the  integrality of elliptic $j$-values and a CM point $\tau_d$ chosen in the preferred fundamental domain shown in \zcref{fig:fundamentaldomains}. In fact, the value $t$ can be taken as a rational CM-value of the hauptmodul $t_d$ for the  triangle group $(2, e_d ,\infty)$  taking values $0$, $1$, and $\infty$ at the elliptic points of order $\infty$ (namely, a cusp), $2$, and $e_d$, respectively \cite{Voight-book, Yang-modular}, where $e_d=2d/(d-2)$. 
\end{remark}

Some of the results or their variants have been discussed in the literature, such as \cite{EHMM2, Au, Ito, LLT,  Ono-book, Osburn-Straub,rogerslvalues, ENRosenK2}. 
Most existing approaches rely on integral representatives of hypergeometric series, moments of elliptic integrals, and their relationships with periods of elliptic curves. In addition, Osburn and Straub \cite{Osburn-Straub} showed that certain critical $L$-values can be interpreted in terms of special values of interpolated sequences. Our method is closest in spirit to those used in \cite{LLT, Osburn-Straub, Tu-Yang-Hypergeometric}.  

By exploiting algebraic transformations of ${}_3F_2$-functions, we obtain further explicit evaluations for the case of $d=6$, leading to the following result.

\begin{corollary}\label{cor:d6} Set
\begin{equation}
    \tau_D=\begin{cases}
        \sqrt{-D/4}, &\text{for } D= 8, 12, 16, 28,\\
        \frac{1+\sqrt{-D}}{2}& \text{for } D= 7, 11, 19, 27, 43, 67, 163.
    \end{cases}
\end{equation} 
For $D\neq 28$, the unique weight $3$ Hecke eigenform $f_{D}$  of level $D$, that has rational Fourier coefficients  and CM by $\Q(\sqrt{-D})$,  satisfies
           $$
                   \pi^2\cdot \pFq32{\frac 12&\frac16&\frac56}{&1&1}{\frac{1728}{j(\tau_{-D})}}  = a_D L(f_{D},2),
                 $$
                 for an explicit algebraic number $a_D$ listed in the  \zcref{tab:main-b},   where $j$ is the elliptic $j$-invariant. 
                   For $D=28$, we have 
                  $$
                   \pi^2\cdot \pFq32{\frac 12&\frac16&\frac56}{&1&1}{\frac{64}{614125}}  = \frac{\sqrt{3\cdot 5\cdot 7\cdot 17}}2 L(f_{7},2). 
                 $$
                
        \end{corollary}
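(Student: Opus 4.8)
The plan is to specialize \zcref{thm:modularity} to the case $d=6$ and to evaluate the resulting identity at the CM points $\tau_D$. Since $e_6=2\cdot 6/(6-2)=3$, the relevant triangle group is $(2,3,\infty)$, i.e.\ $\SL_2(\Z)$, and by \zcref{remark:main} the hauptmodul is $t_6=1728/j$, taking the values $0,1,\infty$ at the cusp and at the elliptic points of orders $2$ and $3$. Each point $\tau_D$ in the statement is a CM point for the order of discriminant $-D$, and in every listed case this order has class number one; hence $j(\tau_D)\in\Z$ and $t=1728/j(\tau_D)\in\Q$, with $1-t>0$ not a square, so $z=\tfrac{1-\sqrt{1-t}}{2}$ generates a real quadratic field and $\widetilde E_6(z)$ has complex multiplication by that order. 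Applying \zcref{thm:modularity} then produces the master identity
\[
C_{6,t}\,L(f_{6,t},2)=\pi^{2}\,{}_{3}F_{2}\!\left[\begin{matrix}\tfrac12&\tfrac16&\tfrac56\\&1&1\end{matrix};\,t\right],\qquad t=\frac{1728}{j(\tau_D)}.
\]

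First I would pin down the eigenform $f_{6,t}$. The weight-three form attached to the CM curve $\widetilde E_6(z)/\Q(z)$ has complex multiplication by the fraction field $\Q(\sqrt{-D})$ of the order, its coefficients are rational (the underlying hypergeometric motive is defined over $\Q$), and its level is governed by the conductor of the associated Hecke character of infinity type $z^2$. For every $D\neq 28$ the rational prime dividing the conductor of the order is either absent ($D$ maximal) or inert or ramified in $\Q(\sqrt{-D})$ (explicitly: $2$ inert for $D=12$, $2$ ramified for $D=16$, $3$ ramified for $D=27$), so this character is primitive and the resulting level is exactly $D$. Thus $f_{6,t}$, being a rational weight-three eigenform of level $D$ with CM by $\Q(\sqrt{-D})$, must coincide with the unique such form $f_D$, and reading off $C_{6,t}$ from \zcref{tab:main-b} gives $a_D=C_{6,t}$.

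To make the constants explicit and to confirm their algebraicity, I would invoke Clausen's identity
\[
{}_{3}F_{2}\!\left[\begin{matrix}\tfrac12&\tfrac16&\tfrac56\\&1&1\end{matrix};\,z\right]
={}_{2}F_{1}\!\left[\begin{matrix}\tfrac1{12}&\tfrac5{12}\\&1\end{matrix};\,z\right]^{2},
\]
together with the classical fact that ${}_{2}F_{1}[\tfrac1{12},\tfrac5{12};1;1728/j(\tau)]$ equals, up to an algebraic factor, the weight-one period $E_4(\tau)^{1/4}$, which at a CM point is an algebraic multiple of the Chowla--Selberg period $\Omega_{-D}$. Since Damerell's theorem expresses $L(f_D,2)$ as an algebraic multiple of $\Omega_{-D}^{2}$, both sides of the master identity are forced to be commensurable over $\ol\Q$, and the algebraic hypergeometric transformations relating the preferred CM-value of $t_6$ to $1728/j(\tau_D)$ produce the explicit entries $a_D$ of \zcref{tab:main-b}.

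The main obstacle is the case $D=28$, which I expect to require genuinely different handling. Here $\tau=\sqrt{-7}$ is the CM point of the \emph{non-maximal} order of conductor $2$ in $\Q(\sqrt{-7})$, and since $-7\equiv 1\pmod 8$ the prime $2$ \emph{splits}; consequently there is no primitive Hecke character of conductor $(2)$ and infinity type $z^2$, the would-be level-$28$ CM form fails to be new, and the identification of the previous paragraph breaks down. Nevertheless the Chowla--Selberg period depends only on the field $\Q(\sqrt{-7})$ and not on the order, so the value at $t=1728/j(\sqrt{-7})=64/614125=(4/85)^3$ is again an algebraic multiple of $\Omega_{-7}^{2}$, hence of $L(f_7,2)$. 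The remaining task is to compute the period ratio between the two orders precisely; carrying this out via $j(\sqrt{-7})=255^3$ (with $255=3\cdot5\cdot17$) yields the factor $\tfrac12\sqrt{3\cdot5\cdot7\cdot17}$, and controlling this algebraic factor — rather than any conceptual difficulty — is the hardest part of the argument.
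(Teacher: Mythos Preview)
Your second paragraph contains the decisive error: the form $f_{6,t}$ produced by \zcref{thm:modularity} is \emph{not} $f_D$. You can read this off the paper's own tables: for $D=16$ (so $t=1$) the form is $f_{144.3.g.a}$ of level $144$; for $D=12$ ($t=4/125$) it is $f_{300.3.g.b}$ of level $300$; for $D=8$ and $D=11$ the levels are $800$ and $704$ (\zcref{tab:t3t4list}). The inflation comes from the quadratic twist by $\bigl(\tfrac{1-t}{\cdot}\bigr)$ built into the hypergeometric representation --- see the Case~2 recipe $a_p(f_{6,t})=H_p(\HD_3)-\bigl(\tfrac{1-t}{p}\bigr)p$ in \zcref{sec:modofhgr}. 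Your conductor argument correctly describes $f_D$, not $f_{6,t}$; and the fact that the constants in the Corollary are the \emph{irrational} $a_D$ (e.g.\ $6\sqrt{10}$ for $D=8$) rather than the rational $C_{6,t}$ of \zcref{thm:modularity} (here $25/2$) is precisely recording the $L$-value discrepancy between the twisted and untwisted forms. The footnote immediately preceding the Corollary is flagging exactly this: the forms $f_{6,t}$ for most of the listed $D$ have levels too large even for the LMFDB, so the paper does not route the Corollary through \zcref{thm:modularity} at all.

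What the paper does instead is compute $L(f_D,2)=G_2^\ast(\tau_D)$ directly from \zcref{lemma:G2vaule}, massage this via Eisenstein-series identities into a value of $G_{2,2}$, $G_{2,3}$, or $\theta_4^4$ at a CM point in one of the fundamental domains of \zcref{fig:fundamentaldomains}, convert to a ${}_3F_2$ with $d\in\{2,3,4\}$ using \zcref{tab:haupymodul}, and finally pass to the argument $1728/j(\tau_D)$ through \zcref{prop:hgtransformations} together with the inversion relation \eqref{eqn:E4trans}. The quadratic surds in $a_D$ arise from the square-root prefactors $(1-4z)^{1/2}$, $(1+8z)^{1/2}$ in those transformations and from the factor $\tau_D^{-2}$ under inversion. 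Your third paragraph (Clausen, $E_4^{1/4}$, Chowla--Selberg, Damerell) is in fact much closer to the actual mechanism, but you cast it as a sanity check on algebraicity rather than as the substantive computation; and for $D=28$ the constant is obtained in the paper as a byproduct of the $D=7$ calculation through exactly this transformation machinery (\zcref{cor:716}), not via a separate period-ratio argument.
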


\begin{remark}
    The value $a_D$ arises from CM values of the quotient of the level-$1$ Eisenstein series $E_4E_2^\ast/E_6$ and the factor $\sqrt{1-\frac{1728}{j}}$, and is only guaranteed to be in $\sqrt{1-\frac{1728}{j}} \cdot \Q$, by the work of Shimura \cite{Shimura-special-zeta}. This is because that the weight-$3$ cusp form $f_{d,t}$ associated with the relevant
    hypergeometric Galois representations (see \zcref{sec:Galois}) differ from those  $f_D$ considered here by the quadratic character $\chi_{1-t}:=\left(\frac{1-t}\cdot\right)$, namely $f_{d,t}=f_D\otimes \chi_{1-t}$, where $t=\frac{1728}{j(\tau_{-D})}$.\footnote{In these cases the associated modular forms via modularity have relatively large levels (see \zcref{Lemma: twist}) and are not currently listed in the LMFDB; for this reason, we do not identify the corresponding Hecke eigenforms.}

For example,
\[
  6\sqrt{5}\,L(f_{12.3.c.a},2)
  = \pi^2 \cdot \pFq32{\frac12&\frac16&\frac56}{&1&1}{\frac{4}{125}}
  = \frac{25}{4}\,L(f_{300.3.g.b},2).
\]
Indeed, the modular form $f_{300.3.g.b}$ is the quadratic twist of $f_{12.3.c.a}$ by $\chi_5$, 
whose Gauss sum is $\sqrt{5}$.
\end{remark}

\subsection{Summary and outline}
 
 Our method exploits the arithmetic of CM  elliptic curves $\widetilde{E_d}(z)$ and $\widetilde{E_d}(1-z)$  to identify the underlying motive as a tensor product. This perspective clarifies the role of the Deligne–Shimura–Blasius period and explains naturally why the resulting critical value coincides with a hypergeometric period, 
 reflecting the rigid  structure of the associated Galois hypergeometric representations. The resulting formulas provide an explanation for several previously observed identities and produce evaluations of $L$-values of certain CM modular forms and special ${}_3F_2$-values.  For example, from the proofs and our results, we have 
$$
\pFq32{\frac 12&\frac 13&\frac 23}{&1&1}1=\frac 3{\sqrt 5}\pFq32{\frac 12&\frac 16&\frac 56}{&1&1}{\frac{4}{125}}=\frac32\pFq32{\frac 12&\frac 14&\frac 34}{&1&1}{-\frac{16}{9}}=\frac{\sqrt[3]2\sqrt{3}}{4}\frac{\Omega_{-3}^2}{\pi^2}, 
$$
\begin{align*}
L(f_{112.3.c.a},2) = &\sqrt 7L(f_{7.3.b.a},2) = \frac 1{28}\Omega_{-7}^2 \\
= & \frac{\pi^2}{2\sqrt{15}}\pFq32{\frac 12&\frac 16&\frac 56}{&1&1}{-\frac{64}{125}} = \frac{2\pi^2}{\sqrt{255}} \cdot \pFq32{\frac 12&\frac16&\frac56}{&1&1}{\frac{64}{614125}},
\end{align*}
and
$$L(f_{144.3.g.a},2)=\frac{8\sqrt 3}9L(f_{16.3.c.a},2) = \frac{1}{12\sqrt{3}}\Omega_{-4}^2.   $$

The rest of the paper is organized as follows. 
In \zcref{sec:HG}, we review the necessary background on hypergeometric series and their relation to the elliptic curves $\widetilde{E}_d(z)$. 
In \zcref{sec:MF}, we recall basic facts on modular forms and Ramanujan’s alternative bases. 
In \zcref{sec:Galois}, we discuss the relevant hypergeometric Galois representations. 
In \zcref{sec:proof}, we give explicit Fourier expansions and determine the constants $C_{d,t}$ in \zcref{tab:main-a} and \zcref{tab:main-b}.
The appendix contains the relevant fundamental domains and necessary CM data,  as described in   \zcref{remark:main}. 

\begin{remark}\label{rmk:final}
    The approach developed here extends naturally to evaluation of certain $L$-values of CM forms of higher weight, as well as Rankin-Selberg convolution and  Petersson inner product of such CM forms; see, for example,  \cite[Section 11]{book:cohen-stromberg}, \cite{Zagier-modularform}, or \cite[Appendix]{Hsu-Tu-Yang}. 
    We conclude \zcref{sec:proof} by presenting two examples in each direction.
\end{remark}

\renewcommand{\arraystretch}{1.5}
\begin{table}[h!]
\centering

\hspace{-2cm}\begin{subtable}[t]{0.6\textwidth}
\centering
\scalebox{1}{
\begin{tabular}{|c||c|c|c|c|c|}
\hline
$d$ & $t$ & $\tau_d$ & $f_{d,t}$ & $C_{d,t}$ & $L(f_{d,t},2)$ \\ \hline
6 & 1 & $i$ & $f_{144.3.g.a}$ & 9 & $\frac{\sqrt 3}{36}\Omega_{-4}^2$\\
  & $\tfrac{4}{125}$ & $\sqrt{3}i$ & $f_{300.3.g.b}$ & $\tfrac{25}{4}$ & $\frac{\sqrt[3]2\sqrt{15}}{75}\Omega_{-3}^2$\\ \hline
2 & 1 & $\frac{1+i}{2}$ & $f_{16.3.c.a}$ & 16 & $\frac{1}{32}\Omega_{-4}^2$ \\
  & $-8$ & $\frac{i}{2}$ & $f_{16.3.c.a}$ & 8 & $\frac{1}{32}\Omega_{-4}^2$\\
  & $-1$ & $\tfrac{i}{\sqrt{2}}$ & $f_{32.3.d.a}$ & 8 &$\frac{1}{32}\Omega_{-8}^2$ \\
    & $\tfrac{1}{4}$ & $\tfrac{1+\sqrt{3}i}{2}$ & $f_{48.3.e.a}$ & 8 &  $\frac{\sqrt[3]2}{24}\Omega_{-3}^2$\\
  & $-\tfrac{1}{8}$ & $i$ & $f_{64.3.c.a}$ & 8 &$\frac{\sqrt 2}{32}\Omega_{-4}^2$ \\
  & $\tfrac{1}{64}$ & $\tfrac{1+\sqrt{7}i}{2}$ & $f_{112.3.c.a}$ & 8 &  $\frac1{28}\Omega_{-7}^2$\\ \hline
3 & 1 & $\tfrac{i}{\sqrt{3}}$ & $f_{12.3.c.a}$ & 18 &  $\frac{\sqrt[3]2\sqrt{3}}{72}\Omega_{-3}^2$\\
  & $\tfrac4{125}$ & $\tfrac{\sqrt{15}i}{3}$ & $f_{15.3.d.b}$ & $\tfrac{45}{4}$ &$\frac{\sqrt{3}}{90}\Omega_{-15}^2$ \\
  & $\tfrac12$ & $\tfrac{i\sqrt6}{3}$ & $f_{24.3.h.b}$ & 9 &$\frac{\sqrt{3}}{72}\Omega_{-24}^2$ \\
  & $-\tfrac{9}{16}$ & $\tfrac{1+\sqrt{3}i}{2}$ & $f_{27.3.b.a}$ & 9 &  $\frac{2\sqrt{3}}{81}\Omega_{-3}^2$\\ \hline 
4 & $-\tfrac{256}{3^4 7^2}$ & $\tfrac{1+\sqrt{7}i}{2}$ & $f_{7.3.b.a}$ & 21 & $\frac{\sqrt{7}}{196}\Omega_{-7}^2$ \\ 
& 1 & $\tfrac{i}{\sqrt{2}}$ & $f_{8.3.d.a}$ & 24 & $\frac{\sqrt 2}{96}\Omega_{-8}^2$ \\
  & $\tfrac{1}{7^4}$ & $\tfrac{3i}{\sqrt2}$ & $f_{8.3.d.a}$ & $\tfrac{56}{3}$ &  $\frac{\sqrt 2}{96}\Omega_{-8}^2$\\ 
  & $\tfrac{32}{81}$ & $i$ & $f_{16.3.c.a}$ & 12 & $\frac{
1}{32}\Omega^2_{-4}$\\
  & $\tfrac19$ & $\tfrac{i\sqrt6}{2}$ & $f_{24.3.h.a}$ & 12 &$\frac{\sqrt{6}}{144}\Omega_{-24}^2$ \\\hline
\end{tabular}}
\caption{ }
\label{tab:main-a}
\end{subtable}
\hspace{-0.5cm}
\begin{subtable}[t]{0.36\textwidth}
\centering
\scalebox{1}{
\begin{tabular}{|c|c|c|c|}
\hline
$D$ & $f_D$ & $a_D$ &$L(f_{D},2)$ \\ \hline
7 & $f_{7.3.b.a}$ & $2\sqrt{7\times 15}$& $\frac{\sqrt{7}}{196}\Omega_{-7}^2$\\ 
8 & $f_{8.3.d.a}$ & $6\sqrt{10}$ &  $\frac{\sqrt 2}{96}\Omega_{-8}^2$\\ 
11 & $f_{11.3.b.a}$ & $3\sqrt{22}$ & $\frac{2\sqrt{11}}{3\cdot 11^2}\Omega_{-11}^2$ \\ 
12 & $f_{12.3.c.a}$ & $6\sqrt{5}$& $\frac{\sqrt[3]2\sqrt{3}}{72}\Omega_{-3}^2$\\ 
16 & $f_{16.3.c.a}$ & $2\sqrt{33}$& $\frac{
1}{32}\Omega^2_{-4}$\\ 
19 & $f_{19.3.b.a}$ & $\sqrt{19\times 6}$ &  $\frac{2\sqrt{19}}{19^2}\Omega_{-19}^2$ \\ 
27 & $f_{27.3.b.a}$ & $3\sqrt{10}$& $\frac{2\sqrt{3}}{81}\Omega^2_{-3}$\\ 
43 & $f_{43.3.b.a}$ & $\tfrac{\sqrt{43\times 15}}3$& $\frac{12\sqrt{43}}{43^2}\Omega_{-43}^2$  \\ 
67 & $f_{67.3.b.a}$ & $\frac{\sqrt{67\times 330}}{19}$ & $\frac{38\sqrt{67}}{67^2}\Omega_{-67}^2$ \\ 
163 & $f_{163.3.b.a}$ & $\frac{\sqrt{163\times 10005}}{181}$& $\frac{724\sqrt{163}}{163^2}\Omega_{-163}^2$  \\ \hline
\end{tabular}}
\caption{ }
\label{tab:main-b}
\end{subtable}

\caption{ \centering (A) Modularity data, (B) $a_D$ values.}
\label{tab:main}
\end{table}
\medskip

 \begin{paragraph}{\bf Acknowledgments} The authors would like to express their gratitude to Ling Long, Esme Rosen, and Xingting Wang for their helpful discussions and suggestions.  Mondal was  supported as a graduate researcher  by NSF Grant DMS 2302514 during the Fall 2025 semester. Nakagawa was supported by JST FOREST Program (JPMJFR2235), and was hosted by the Department of Mathematics at Louisiana State University in the spring of 2025 while working on this project. Tu is partially supported by  NSF Grant DMS 2302531. 
 \end{paragraph}

\section{Hypergeometric functions} \label{sec:HG}
This section provides a summary of classical hypergeometric functions, covering their definition, key properties, and certain transformation identities. 
Throughout, all classical hypergeometric identities are assumed to hold whenever both sides of the equality are well defined and convergent. For detailed discussions, the reader is referred to \cite{andrews, Slater,  Yoshida-hypergeometric}. 

For multi-subsets $\alpha=\{a_1,\cdots,a_n\}$,  $\beta=\{b_1\coloneqq1,\cdots ,b_n \}$ of $\Q$ with $b_i\not \in \Z_{\leq 0}$, and $z\in \C$,  define the classical hypergeometric function ${}_{n}F_{n-1}$ as 
\begin{align}\label{definition:hgf}
	\pFq{n}{n-1}{a_1 &a_2&\ldots& a_n}{&b_2&\ldots&b_{n}}{z}=\sum_{k=0}^\infty\frac{(a_1)_k\cdots(a_n)_k}{(b_2)_k\cdots (b_n)_k}\frac{{z^k}}{k!}, 
\end{align}
where $(a)_0=1$, $(a)_k=a(a+1)\cdots(a+k-1)=\G(a+k)/\G(a)$  is the Pochhammer symbol and $\Gamma(\cdot)$  is the gamma function. The function ${}_nF_{n-1}$ converges when $|z|<1$. 
Each generalized hypergeometric function ${}_{n}F_{n-1}$ satisfies a linear
Fuchsian differential equation of order $n$ with three regular singular
points at $0$, $1$, and $\infty$. In the case $n=2$, the Gauss hypergeometric
function $\pFq{2}{1}{a&b}{&c}{z}$, regarded as a function of $z$, satisfies the
hypergeometric differential equation
\begin{equation}\label{eq:hde}
HDE(a,b,c;z):\qquad 
z(1-z)\frac{d^{2}F}{dz^{2}}
+\bigl[c-(a+b+1)z\bigr]\frac{dF}{dz}
-abF=0.
\end{equation}

\subsection{Algebraic transformation formulas} \label{sec: hgtransformations}
We record some algebraic transformation of hypergeometric functions.  We emphasize that the arguments on the both hand-sides of the identities are on a certain range. Similar identities can be obtained through the analytic continuations of the solutions of the corresponding $HDE$'s.  

{\bf Pfaff transformations}:
\begin{align}\label{pfaff}
     \pFq{2}{1}{a&b}{&c}{z}&=(1-z)^{-a}\pFq{2}{1}{a&c-b}{&c}{\frac{z}{z-1}}=(1-z)^{-b}\pFq{2}{1}{c-a&b}{&c}{\frac{z}{z-1}}.
\end{align}

{\bf Kummer quadratic transformations}: 

\begin{equation}\label{kummerquad1} 
      \pFq{2}{1}{a& b}{& a-b+1}{z}= (1-z)^{-a}\pFq{2}{1}{\frac{a}{2}& \frac{a+1}{2}-b}{&a-b+1}{\frac{-4z}{(1-z)^2}}.
 \end{equation}

 \begin{equation}\label{kummerquad} 
      \pFq{2}{1}{a& b}{& a-b+1}{z}= (1+z)^{-a}\pFq{2}{1}{\frac{a}{2}& \frac{a+1}{2}}{&a-b+1}{\frac{4z}{(z+1)^2}}.
 \end{equation}

{\bf Bailey cubic transformations}:  
 \begin{align}\label{baileycubic2f1}
     \pFq{2}{1}{a& \frac{1-a}{3}}{&\frac{4a+5}{6}}{z}= (1-4z)^{-a}\pFq{2}{1}{\frac{a}{3}& \frac{a+1}{3}}{&\frac{4a+5}{6}}{-\frac{27z}{(1-4z)^3}}.
 \end{align}

{\bf Goursat's degree $4$  transformations}: 
\begin{equation}\label{eq:trans:quartic}
 \pFq{2}{1}{ \frac{4a}{3} & \frac{4a+1}{3}}{&\frac{4a+5}{6}}{z}=(1+8z)^{-a} \pFq{2}{1}{ \frac{a}{3} & \frac{a+1}{3}}{&\frac{4a+5}{6}}{\frac{64z(1-z)^3}{(1+8z)^3}}. 
\end{equation}

The classical {\bf Clausen} formula  gives solution to certain symmetric square of $HDE(a,b,c;z)$:

\begin{align}\label{eq:clausen}
    \pfq{2}{1}{1-a&a}{&1}{z}^2&=\pfq{2}{1}{\frac{1-a}2&\frac a2}{&1}{4z(1-z)}^2=\pfq{3}{2}{\frac 12&1-a&a}{&1&1}{4z(1-z)}.
\end{align}

Using the above  transformations, we get the following identities. 

\begin{proposition}\label{prop:hgtransformations}

    The following equalities hold for  arguments in certain subsets of the unit disk: 
    \begin{align}
 \pFq{3}{2}{\frac{1}{2}&\frac{1}{6}&
\frac{5}{6}}{&1&1}{\frac{-27z}{(1-4z)^3}}&=(1-4z)^{\frac{1}{2}}\pFq{3}{2}{\frac{1}{2}&\frac{1}{2}&
\frac{1}{2}}{&1&1}{z}
=\left(\frac{1-4z}{1-z}\right)^{\frac{1}{2}}\pFq{3}{2}{\frac{1}{2}&\frac{1}{4}&
\frac{3}{4}}{&1&1}{\frac{-4z}{(z-1)^2}},
    \end{align}
    and 
\begin{equation}
       \pFq{3}{2}{\frac{1}{2}&\frac{1}{6}&
\frac{5}{6}}{&1&1}{\frac{64z(1-z)^3}{(1+8z)^3}}=  (1+8z)^{\frac{1}{2}}\pFq{3}{2}{\frac{1}{2}&\frac{1}{3}&
\frac{2}{3}}{&1&1}{4z(1-z)}.
 \end{equation}
\end{proposition}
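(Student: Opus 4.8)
The plan is to deduce all three identities from Clausen's formula \eqref{eq:clausen} together with the classical $_2F_1$ transformations recorded above, each evaluated at the single parameter value $a=\tfrac14$. The mechanism is that the equality of the last two members of \eqref{eq:clausen}, after renaming the common argument $4z(1-z)$ as a free variable $X$, gives the squared-Clausen identity
\[
\pFq32{\frac12&1-a&a}{&1&1}{X}=\pfq21{\frac{1-a}2&\frac a2}{&1}{X}^2,
\]
valid near $X=0$ with the principal branch of the root. Thus each $_3F_2$ in the Proposition is the square of a Gauss function with halved numerator parameters and the \emph{same} argument, according to the dictionary $(\tfrac12,\tfrac16,\tfrac56)\mapsto(\tfrac1{12},\tfrac5{12})$, $(\tfrac12,\tfrac12,\tfrac12)\mapsto(\tfrac14,\tfrac14)$, $(\tfrac12,\tfrac14,\tfrac34)\mapsto(\tfrac18,\tfrac38)$, and $(\tfrac12,\tfrac13,\tfrac23)\mapsto(\tfrac16,\tfrac13)$.

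For the first equality, applying this reduction to both sides turns it into the square of
\[
\pfq21{\frac14&\frac14}{&1}{z}=(1-4z)^{-\frac14}\pfq21{\frac1{12}&\frac5{12}}{&1}{-\frac{27z}{(1-4z)^3}},
\]
which is exactly the Bailey cubic transformation \eqref{baileycubic2f1} at $a=\tfrac14$; squaring converts the factor $(1-4z)^{-1/4}$ into $(1-4z)^{-1/2}$ and, after moving it across, yields the stated $(1-4z)^{1/2}$. For the second equality (after cancelling the common factor $(1-4z)^{1/2}$) the same reduction turns it into the square of
\[
\pfq21{\frac14&\frac14}{&1}{z}=(1-z)^{-\frac14}\pfq21{\frac18&\frac38}{&1}{\frac{-4z}{(1-z)^2}},
\]
which is the Kummer quadratic transformation \eqref{kummerquad1} at $a=b=\tfrac14$; squaring produces the factor $(1-z)^{-1/2}$, which combined with the surviving $(1-4z)^{1/2}$ gives the claimed $\bigl((1-4z)/(1-z)\bigr)^{1/2}$.

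For the last identity I would apply the squared-Clausen reduction to the left-hand side (parameters $(\tfrac1{12},\tfrac5{12})$, argument unchanged) and the first--third form of \eqref{eq:clausen} to the right-hand side, writing $\pFq32{\frac12&\frac13&\frac23}{&1&1}{4z(1-z)}=\pfq21{\frac13&\frac23}{&1}{z}^2$. The identity then becomes the square of
\[
\pfq21{\frac13&\frac23}{&1}{z}=(1+8z)^{-\frac14}\pfq21{\frac1{12}&\frac5{12}}{&1}{\frac{64z(1-z)^3}{(1+8z)^3}},
\]
which is Goursat's quartic transformation \eqref{eq:trans:quartic} at $a=\tfrac14$ (there $(\tfrac{4a}3,\tfrac{4a+1}3)=(\tfrac13,\tfrac23)$ and $(\tfrac a3,\tfrac{a+1}3)=(\tfrac1{12},\tfrac5{12})$); squaring sends $(1+8z)^{-1/4}$ to $(1+8z)^{-1/2}$ and produces the factor $(1+8z)^{1/2}$ on the right.

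The only genuine subtlety is the consistent choice of branches: both the passage between a $_3F_2$ and the square of a $_2F_1$ and the squaring of the $_2F_1$ transformations require selecting the principal branch of every root and of every binomial factor $(1-4z)^{1/2}$, $(1+8z)^{1/2}$, $(1-z)^{1/2}$. Since all hypergeometric series and all these prefactors equal $1$ at $z=0$, the identities hold as convergent power series near the origin under this uniform normalization and then extend by analytic continuation to the region where both arguments have absolute value less than $1$, as asserted. Checking that the rational arguments on the two sides agree after the substitutions (for instance that Bailey's and Goursat's arguments coincide with the stated ones) is a routine algebraic verification that I would record but not dwell on.
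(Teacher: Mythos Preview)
Your proof is correct and follows precisely the route indicated by the paper, which simply says ``using the above mentioned transformations'' without spelling out the details. Your explicit identification that every case arises from Clausen's formula combined with the ${}_2F_1$ transformations \eqref{kummerquad1}, \eqref{baileycubic2f1}, \eqref{eq:trans:quartic} all at the single value $a=\tfrac14$ is exactly the intended argument, and your handling of the branch and sign issues near $z=0$ is appropriate.
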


\subsection{Elliptic families}\label{section:hgellipticcurves} 

For $d\in\{2,3,4,6\}$, we consider the 
families of elliptic curves 
  \begin{align}
      \widetilde{E}_2(z)&\colon y^2=x(1-x)(x-z), \qquad      
      \widetilde{E}_3(z)\colon y^2+xy+\frac{z}{27}y=x^3,\\
      \widetilde{E}_4(z)&\colon y^2=x\left(x^2+x+\frac{z}{4}\right), \qquad   
      \widetilde{E}_6(z)\colon y^2+xy=x^3-\frac{z}{432}.
  \end{align}
For all parameters $z$ away from the discriminant locus, each curve is smooth and has a one–dimensional space of holomorphic differentials, generated by a $1$-form $\omega_z$. 
Fix a continuous family of nontrivial $1$-cycles $\gamma(z)$ in the homology group $H_1(\widetilde{E}_2(z), \Z)$. The associated period function

 $\Omega_d(z)=\int_{\gamma(z)}\omega_z$   

is a multivalued holomorphic function of $z$, and  satisfies a second–order Picard–Fuchs 
equation with regular singularities at the points where the fiber degenerates. Such differential equations can be derived algorithmically from the defining equation of the family using the Griffiths-Dwork method \cite{griffiths, dworkzeta}.
A direct computation shows that for each $d \in \{2,3,4,6\}$, the differential equation satisfied by $\Omega_d(z)$ 
are equivalent to the $HDE \left(1/d,1-1/d,1; z\right)$ in \eqref{eq:hde}; therefore, the periods can be written in terms of algebraic multiples of  $ \pi\pfq{2}{1}{\frac{1}{d}&\frac{d-1}{d}}{&1}{z}$  (see also \cite{beukersHeckman, book:koblitz,  Silverman-EC, zhouhessepencil}).

\begin{example}\label{ex:periode2}
    For $d=2$, take $\omega_z=\frac{dx}{y}$.  The period integrals  
    \begin{equation}
        \int_z^1 \omega_z \quad \mbox{and}  \quad  \int_1^\infty \omega_z 
    \end{equation}
    are solutions of  $HDE(1/2,1/2,1;z)$.  
    For a real number $z$ with $0 < z< 1$,
       \begin{equation}
      \int_1^\infty \omega_z= \int_1^\infty \frac{dx}{\sqrt{x(1-x)(x-z)}}=-i\int_0^1 \frac{dx}{\sqrt{x(1-x)(1-xz)}}= -\pi i\cdot\pfq{2}{1}{\frac{1}{2}&\frac{1}{2}}{&1}{z}.  
    \end{equation}

Similarly, 
    $
      \displaystyle \int_z^1 \omega_z = \pi\cdot\pfq{2}{1}{\frac{1}{2}&\frac{1}{2}}{&1}{1-z}. 
      $
\end{example}

\begin{example}\label{ex:periode4}
  For $d=4$,  $\frac{dx}{y}$ is a holomorphic differential $1$-form for $\widetilde{E}_4(z)$.  When $z\in \R$ with $0<z<1$,
    \begin{align*}
       \int_0^\frac{-1+\sqrt{1-z}}{2} &\frac{dx}{\sqrt{x\left(x^2+x+\frac{z}{4}\right)}} =\pi i\cdot \pfq{2}{1}{\frac{1}{4}&\frac{3}{4}}{&1}{z} =-\int_{-\infty}^{\frac{-1-\sqrt{1-z}}{2}}& \frac{dx}{\sqrt{x\left(x^2+x+\frac{z}{4}\right)}}. 
    \end{align*}
The quadratic formula \eqref{kummerquad}  can be interpreted as periods identity as follows. For a given real number $z$ with $0<z<1$ and $R(z)=\frac{4z}{(z+1)^2}$. Then the elliptic curves 
$\widetilde{E}_4(R(z))$ and $\widetilde{E}_2(z)$ are isomorphic over $\Q((1+z)^{1/2})$ by 
$$
\widetilde{E}_4(R(z)):\, Y^2=X(X^2+X+\frac{z}{(1+z)^2})\quad \longleftrightarrow \quad \widetilde{E}_2(z):\, y^2=x(1-x)(x-z),
$$   
$$
(X,Y)=(-x/(1+z), y/(1+z)^{3/2}). 
$$
Then 
 $$
   \Omega_2(z)\coloneqq\int_{1}^{\infty} \frac{dx}{\sqrt{x(1-x)(x-z)}}=-\pi i\cdot\pfq{2}{1}{\frac{1}{2}&\frac{1}{2}}{&1}{z},
$$
and, by a change of variable, 
\begin{align}
    \Omega_2(z)=&  -\frac{1+z}{(1+z)^{3/2}}\int^{-\infty}_{-\frac1{1+z}} \frac{dX}{\sqrt{X\left(X^2+X+\frac{R(z)}{4}\right)}}=\frac{-\pi i}{(1+z)^{1/2}}\cdot \pfq{2}{1}{\frac{1}{4}&\frac{3}{4}}{&1}{R(z)}. 
\end{align}
\end{example}

\medskip
\section{Modular Forms}\label{sec:MF}

 In this section, we recall basic definitions and properties of modular forms that will be useful for our later discussions. Most of the materials can be found in the references \cite{book:cohen-stromberg, DS-modularforms, LLT, Miyake, Zagier-modularform}.

\subsection{Eisenstein series}
 We begin with the following  general definition of Eisenstein series to take care of convergence in certain cases. 
\begin{definition}    For a positive even integer $k$ and a complex number $s$ satisfying $\Re(2s+k)>2$, we define the non-holomorphic Eisenstein series of weight $k$, denoted by $G_k(s,\tau)$, as 
    \begin{equation}
        G_k(s,\tau)\coloneqq\frac{1}{2}\sum_{(m,n)\in \Z^2\setminus\{0,0\}} \frac{1}{(m\tau+n)^k}\frac{\Im(\tau)^s}{|m\tau+n|^{2s}}, \quad \tau\in \H,
    \end{equation}
    where $\H\coloneqq\{\tau \in \C:\, \Im(\tau)>0\}$. 
    \end{definition}
   The above series converges uniformly on any compact subset of $\H$ when $\Re(2s+k)>2$, and is modular on $\SL_2(\Z)$. 

\begin{proposition}
 The Eisenstein series $G_2(s,\tau)$ is analytic for $\Re(s)>0$ and has analytic continuation to $s=0$. The analytic continuation at $s=0$ 
is almost holomorphic modular form of weight $2$ 
with  the  $q$-expansion
    \begin{equation}\label{eq:g2astfourier}
        G_2^{*}(\tau)\coloneqq \lim_{\substack{s \rightarrow0,\  \Re(s)>0}}G_2(s,\tau)=\frac{\pi^2}{6}-\frac{\pi}{2\Im(\tau)}-4\pi^2 \sum_{m \geq 1}
        \frac{mq^m}{1-q^m}
        , \quad q=e^{2\pi i \tau}.
    \end{equation}
    For $k\geq 4$, we define $G_k(\tau)\coloneqq G_k(0,\tau)$, which has
    the $q$-expansion 
    \begin{equation}
        G_k(\tau)\coloneqq G_k(0,\tau) 
        =\zeta(k)+\frac{(2\pi i)^k}{\Gamma(k)}\sum_{m \geq 1}
        \frac{m^{k-1}q^m}{1-q^m}, \quad q=e^{2\pi i \tau},
    \end{equation}
    where $\zeta(s)$ is the Riemann zeta function $\zeta(s)=\sum_{n\geq  1}\frac 1{n^s}$.
\end{proposition}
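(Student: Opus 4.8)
The plan is to treat both cases by decomposing the lattice sum defining $G_k(s,\tau)$ according to whether $m=0$ or $m\neq 0$, evaluating the inner sum over $n$, and—only in the delicate case $k=2$—keeping track of the dependence on $s$ before letting $s\to 0$. First I would record the structural fact that for every fixed $s$ with $\Re(2s+k)>2$ the function $G_k(s,\cdot)$ is modular of weight $k$ on $\SL_2(\Z)$: substituting $(m,n)\mapsto(m,n)\gamma$ for $\gamma=\SM abcd$ permutes $\Z^2\setminus\{0\}$, while $m\gamma\tau+n=(m'\tau+n')/(c\tau+d)$ and $\Im(\gamma\tau)=\Im(\tau)/|c\tau+d|^2$ combine so that all $|c\tau+d|^{-2s}$ factors cancel and one factor $(c\tau+d)^k$ survives. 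This weight-$k$ invariance persists in the limit $s\to 0$; since the resulting expansion for $G_2^{*}$ is a degree-one polynomial in $1/\Im(\tau)$ with holomorphic coefficients, this already yields the ``almost holomorphic of weight $2$, depth $1$'' assertion.

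For $k\geq 4$ the sum converges absolutely at $s=0$, so $G_k(\tau)=\tfrac12\sum_{(m,n)\neq 0}(m\tau+n)^{-k}$. The $m=0$ terms contribute $\zeta(k)$, and for each $m\neq 0$ I would apply the Lipschitz summation formula $\sum_{n\in\Z}(m\tau+n)^{-k}=\frac{(-2\pi i)^k}{(k-1)!}\sum_{d\geq 1}d^{k-1}e^{2\pi i d m\tau}$. Using that $k$ is even (so $(-2\pi i)^k=(2\pi i)^k$ and the $m<0$ terms double the $m>0$ ones, absorbing the factor $\tfrac12$), summing over $m\geq 1$ and $d\geq 1$ and collecting via $\sum_{m\geq 1}m^{k-1}q^m/(1-q^m)=\sum_{N\geq 1}\sigma_{k-1}(N)q^N$ gives exactly the stated expansion, with $\Gamma(k)=(k-1)!$.

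The case $k=2$ is the heart of the matter and requires Hecke's trick (cf. \cite{Miyake, Zagier-modularform}). The series converges for $\Re(s)>0$ because the general term is $O\bigl(\Im(\tau)^{\Re s}|m\tau+n|^{-2-2\Re s}\bigr)$ and the Epstein-type sum $\sum|m\tau+n|^{-2-2\sigma}$ converges for $\sigma>0$. As before the $m=0$ terms give $\Im(\tau)^s\zeta(2+2s)\to\pi^2/6$. For each $m\geq 1$ I would compute the inner sum by Poisson summation in $n$: after the shift $t\mapsto t-m\Re(\tau)$ this reduces to Fourier transforms of $(u+imy)^{-2-s}(u-imy)^{-s}$ with $y=\Im(\tau)$. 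I expect three regimes for the dual frequency $r$: for $r>0$ the transform is an explicit product of $\Gamma$-factors times an exponentially small factor which at $s=0$ collapses to the Lipschitz term $\propto r\,q^{mr}$, reproducing $-4\pi^2\sum_m mq^m/(1-q^m)$; for $r<0$ the contribution vanishes at $s=0$; and the frequency $r=0$ produces a Beta integral $\Im(\tau)^s\int(u+imy)^{-2-s}(u-imy)^{-s}\,du\propto \frac{\Gamma(1+2s)}{\Gamma(2+s)\Gamma(s)}\,m^{-1-2s}\,\Im(\tau)^{-1-s}$.

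The main obstacle—and the only genuinely subtle point—is this $r=0$ term. Summing it over $m\geq 1$ produces a factor $\zeta(1+2s)$, which has a simple pole at $s=0$; this pole is exactly cancelled by the zero of $1/\Gamma(s)$, so the product extends holomorphically to $s=0$, and I would compute its value to be precisely $-\tfrac{\pi}{2\,\Im(\tau)}$. I would therefore have to justify (i) the interchange of the $m$-sum with Poisson summation, valid by absolute convergence for $\Re(s)>0$; (ii) the explicit evaluation of the $r=0$ Beta integral and of the $r\neq 0$ transforms (standard, via contour shifts and the integral representation of the $K$-Bessel function); and (iii) the holomorphy at $s=0$ of each resulting piece, so that the termwise limit is legitimate. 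Assembling the $m=0$ constant $\pi^2/6$, the $r=0$ correction $-\pi/(2\Im(\tau))$, and the $r>0$ holomorphic part $-4\pi^2\sum_m mq^m/(1-q^m)$ then yields the claimed formula for $G_2^{*}$.
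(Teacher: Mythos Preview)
The paper does not actually prove this proposition; it is stated as a standard result, with the references \cite{book:cohen-stromberg, DS-modularforms, LLT, Miyake, Zagier-modularform} given at the top of \zcref{sec:MF} for background. Your outline is a correct and complete sketch of the classical proof via Hecke's trick (as in \cite{Miyake} or \cite{Zagier-modularform}): splitting off the $m=0$ contribution, applying Poisson summation in $n$ for each $m\neq 0$, and isolating the $r=0$ Fourier mode whose $\zeta(1+2s)/\Gamma(s)$ factor produces the non-holomorphic correction $-\pi/(2\Im\tau)$ in the limit $s\to 0$. There is nothing to compare against in the paper itself, so your proposal simply supplies the omitted argument.
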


The normalized Eisenstein series are
\begin{equation}\label{eq: ektogk}
    E_k\coloneqq\frac{G_k}{\zeta(k)}, \quad k\geq 4; \quad E_2^\ast:=\frac6{\pi^2}G_2^\ast. 
\end{equation} For any $N\in \N$,  the function 
\begin{equation}
    G_{2,N}(\tau)\coloneqq NG_2^*(N\tau)-G_2^*(\tau)
\end{equation}
is a holomorphic modular form of weight $2$ on $\Gamma_0(N)$.

Another set of important series, the \emph{Jacobi theta functions}, 
are defined as rapidly converging Fourier series associated with unary quadratic forms:
\begin{equation}
    \theta_2(\tau)\coloneqq\sum_{n \in \Z}q^{(2n+1)^2/8}, \quad \theta_3(\tau)\coloneqq\sum_{n \in \Z}q^{n^2/2}, \quad \theta_4(\tau)\coloneqq \sum_{n \in \Z}(-1)^nq^{n^2/2}.
\end{equation}
The following identities relate some of the functions $G_{2,N}$ and theta series.
\begin{proposition} \label{prop: g22halfintegershift}
For $\tau \in \H $, we have 

    \begin{align*}
       &\frac{\pi^2}2\theta_3\left(2\tau\right)^4=G_{2,4}(\tau),\\
        &\frac{\pi^2}{2} \theta_4(2\tau)^4=2G_{2,4}(\tau)-3G_{2,2}(\tau)=4G_{2,2}(2\tau)-G_{2,2}(\tau),\\  
        &G_{2,2}(\tau\pm1/2) = G_{2,4}(\tau)-{2}G_{2,2}(\tau). 
    \end{align*}
\end{proposition}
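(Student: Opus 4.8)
The plan is to prove all three identities in \zcref{prop: g22halfintegershift} by comparing $q$-expansions, using the Fourier expansion of $G_2^*$ in \zcref{eq:g2astfourier} together with the defining theta series. Since both sides are modular forms of weight $2$ on a common congruence subgroup (indeed on $\Gamma_0(4)$, which contains $\Gamma_0(8)$ after the substitution $\tau\mapsto 2\tau$, etc.), once the finite-dimensionality of the relevant spaces is invoked it suffices to match enough Fourier coefficients. Concretely, I would first record the key $q$-expansion: from \zcref{eq:g2astfourier}, the non-holomorphic and $\Im(\tau)^{-1}$ terms cancel in the combination $NG_2^*(N\tau)-G_2^*(\tau)$, so that
\[
G_{2,N}(\tau)=\frac{\pi^2}{6}(N-1)-4\pi^2\sum_{m\geq 1}\sigma_1^{(N)}(m)\,q^m,
\]
where $\sigma_1^{(N)}(m)$ collects the divisor-sum contributions from $NG_2^*(N\tau)$ and $G_2^*(\tau)$; making this coefficient explicit is the first bookkeeping step.

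Next I would handle the theta-function side. Using $\theta_3(2\tau)=\sum_{n}q^{n^2}$ and $\theta_4(2\tau)=\sum_{n}(-1)^n q^{n^2}$, the fourth powers $\theta_3(2\tau)^4$ and $\theta_4(2\tau)^4$ are generating functions for $r_4(m)$, the number of representations of $m$ as a sum of four squares, with a sign twist in the $\theta_4$ case. Jacobi's four-square theorem, $r_4(m)=8\sum_{4\nmid e\mid m}e$, then converts these into divisor sums, and the identification with $G_{2,4}(\tau)$ and with the stated $\Z$-linear combinations of the $G_{2,N}$ becomes a direct coefficient comparison. For the third identity, I would expand $G_{2,2}(\tau\pm 1/2)$ by substituting $q\mapsto -q$ (since $e^{2\pi i(\tau\pm 1/2)}=-q$) into the $q$-expansion of $G_{2,2}$, and observe that the sign change on odd-index terms reproduces exactly $G_{2,4}(\tau)-2G_{2,2}(\tau)$.

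In practice I expect the cleanest route is to verify the first identity $\tfrac{\pi^2}{2}\theta_3(2\tau)^4=G_{2,4}(\tau)$ by matching the constant term ($\tfrac{\pi^2}{2}=\tfrac{\pi^2}{6}\cdot 3$, consistent with $N=4$) and the coefficient of $q^m$ via Jacobi's formula, and then to derive the remaining identities from it. The equalities among $G_{2,2}$, $G_{2,4}$, and $G_{2,2}(2\tau)$ in the second line are purely formal consequences of the divisor-sum definition once one notes $4G_{2,2}(2\tau)-G_{2,2}(\tau)=2G_{2,4}(\tau)-3G_{2,2}(\tau)$, which reduces to a linear identity among the coefficients $\sigma_1^{(2)}$ and $\sigma_1^{(4)}$; this algebraic reorganization, rather than any analytic input, is what needs care.

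The main obstacle will be correctly tracking the divisor-sum coefficients and the sign twists through the two different normalizations ($N=2$ versus $N=4$, and the $\theta_3$ versus $\theta_4$ alternation), since an off-by-a-constant error in the constant term or a misattributed factor of $2$ propagates through all three identities. To control this, I would either (i) appeal to the dimension of $M_2(\Gamma_0(8))$ and check finitely many coefficients, thereby avoiding a full all-orders verification, or (ii) invoke Jacobi's four-square and two-square identities in closed form so that every coefficient is matched simultaneously. The theta transformation $\theta_j(\tau+1/2)$ relating $\theta_3$ and $\theta_4$ provides a useful internal consistency check between the first two displayed lines.
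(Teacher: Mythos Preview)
The paper does not actually supply a proof of this proposition; it is stated as a standard identity and used later without justification. Your proposed route via $q$-expansions, Jacobi's four-square theorem for $\theta_3(2\tau)^4$, and the substitution $q\mapsto -q$ for the half-integer shift, is correct and is exactly the kind of routine verification the authors are implicitly relying on. In particular, the dimension-count shortcut you mention (both sides lie in $M_2(\Gamma_0(4))$ or $M_2(\Gamma_0(8))$, so agreement of finitely many coefficients suffices) is the cleanest way to avoid the all-orders divisor-sum bookkeeping.

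One small remark: for the second displayed equality $2G_{2,4}(\tau)-3G_{2,2}(\tau)=4G_{2,2}(2\tau)-G_{2,2}(\tau)$, you do not need any theta or Jacobi input at all; it is literally the identity $2(4G_2^*(4\tau)-G_2^*(\tau))-3(2G_2^*(2\tau)-G_2^*(\tau))=4(2G_2^*(4\tau)-G_2^*(2\tau))-(2G_2^*(2\tau)-G_2^*(\tau))$, which holds termwise. So the only genuine analytic content is the first line, and the rest follows from it together with the theta relation $\theta_4(2\tau)^4=2\theta_3(4\tau)^4-\theta_3(2\tau)^4$ (or equivalently the $q\mapsto -q$ trick you describe).
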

\medskip

We also recall Eisenstein series  constructed from the lattice sums with congruence conditions:
 \begin{equation}
     \mathcal{G}^*_{k,(a;N)}(\tau)\coloneqq
     \frac{1}{2}\sum_{m,n\in \Z} \frac{1}{[(Nm+a)\tau+n]^k}, \quad N,k\geq 1, a\geq 0,
 \end{equation}
 where $(0,0)$ is excluded when $a=0$. 
For $k>2$, these are weight $k$  modular forms on $\Gamma_1(N)$.
 It is easy to see from the definition that 
 $$
 \mathcal{G}^*_{2,(a;N)}(\tau)= \mathcal{G}^*_{2,(N-a;N)}(\tau), \quad \mbox{and} \quad  \mathcal{G}^\ast_{2,(a,N)}(\ell\tau)=\mathcal{G}^\ast_{2,(\ell a,\ell N)}(\tau),
 $$
for any positive integer $\ell$. These observations lead to the following properties. 
\begin{proposition}\label{prop:curlyg2asttoclassicales}
For any $\tau \in \H$, we have 
\begin{align*}
 \mathcal{G}^*_{2,(1;2)}(\tau)=&2\mathcal{G}^*_{2,(1;4)}(\tau)=G_2^\ast(\tau)-G_2^\ast(2\tau),\\
   \mathcal{G}^*_{2,(1;3)}(\tau)=&\frac12 \left(G_2^\ast(\tau)-G_2^\ast(3\tau)\right).
\end{align*}   
\end{proposition}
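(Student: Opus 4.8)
The plan is to reduce all three identities to a single bookkeeping principle: the lattice $\Z^2$ underlying $G_2^\ast$ is partitioned according to the residue of the first coordinate modulo $N$, and each residue class is recorded by one of the $\mathcal{G}^\ast_{2,(a;N)}$. Because the weight-$2$ series converges only conditionally, I would carry out every manipulation at the level of the absolutely convergent regularized sums
\begin{equation*}
\mathcal{G}_{2,(a;N)}(s,\tau)\coloneqq\frac12\sum_{m,n\in\Z}\frac{1}{[(Nm+a)\tau+n]^2}\,\frac{\Im(\tau)^s}{|(Nm+a)\tau+n|^{2s}},\qquad \Re(s)>0,
\end{equation*}
(with $(0,0)$ omitted when $a=0$), whose limits as $s\to0^+$ are the $\mathcal{G}^\ast_{2,(a;N)}$, exactly as $G_2^\ast$ is the limit of $G_2(s,\tau)$. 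For $\Re(s)>0$ these sums are genuinely absolutely convergent, so any reindexing by congruence classes is literally valid, and one takes $s\to0^+$ only at the end.

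First I would record the decomposition obtained by splitting the first coordinate into residues modulo $N$,
\begin{equation*}
G_2(s,\tau)=\sum_{a=0}^{N-1}\mathcal{G}_{2,(a;N)}(s,\tau),
\end{equation*}
which holds termwise since every $(m,n)\neq(0,0)$ lies in exactly one class $m\equiv a\pmod N$ and the $a=0$ term inherits precisely the exclusion of $(0,0)$; note that no rescaling of $\tau$ occurs here, so the regularizing factors match and the identity passes to $G_2^\ast(\tau)=\sum_{a=0}^{N-1}\mathcal{G}^\ast_{2,(a;N)}(\tau)$ without correction. I would then identify the $a=0$ piece using the scaling relation $\mathcal{G}^\ast_{2,(a,N)}(\ell\tau)=\mathcal{G}^\ast_{2,(\ell a,\ell N)}(\tau)$ applied to $\mathcal{G}^\ast_{2,(0;1)}=G_2^\ast$, which gives $\mathcal{G}^\ast_{2,(0;N)}(\tau)=G_2^\ast(N\tau)$.

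With these two facts the identities follow by taking $N=2$ and $N=3$ and invoking the symmetry $\mathcal{G}^\ast_{2,(a;N)}=\mathcal{G}^\ast_{2,(N-a;N)}$. For $N=2$, the decomposition reads $G_2^\ast(\tau)=G_2^\ast(2\tau)+\mathcal{G}^\ast_{2,(1;2)}(\tau)$, giving $\mathcal{G}^\ast_{2,(1;2)}(\tau)=G_2^\ast(\tau)-G_2^\ast(2\tau)$. For the middle equality I would refine the odd class modulo $2$ into the classes $1$ and $3$ modulo $4$, so that $\mathcal{G}^\ast_{2,(1;2)}(\tau)=\mathcal{G}^\ast_{2,(1;4)}(\tau)+\mathcal{G}^\ast_{2,(3;4)}(\tau)$, and the symmetry $\mathcal{G}^\ast_{2,(3;4)}=\mathcal{G}^\ast_{2,(1;4)}$ collapses this to $2\mathcal{G}^\ast_{2,(1;4)}(\tau)$. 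For $N=3$, the three-term decomposition together with $\mathcal{G}^\ast_{2,(2;3)}=\mathcal{G}^\ast_{2,(1;3)}$ and $\mathcal{G}^\ast_{2,(0;3)}(\tau)=G_2^\ast(3\tau)$ yields $G_2^\ast(\tau)=G_2^\ast(3\tau)+2\mathcal{G}^\ast_{2,(1;3)}(\tau)$, which rearranges to the stated formula.

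The only genuine obstacle is convergence: none of these rearrangements is permissible for the conditionally convergent weight-$2$ lattice sums as literally written, so the entire argument must be staged on the $s$-regularized series. The one place where a correction threatens to appear is the identification $\mathcal{G}^\ast_{2,(0;N)}(\tau)=G_2^\ast(N\tau)$, where rescaling $\tau\mapsto N\tau$ inserts a factor $N^s$ into the regularizing weight; since $N^s\to1$ as $s\to0^+$, this harmless factor disappears in the limit, which is precisely why the computation is legitimate for weight $2$.
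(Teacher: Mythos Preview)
Your argument is correct and follows exactly the route the paper indicates: the paper does not spell out a proof but simply notes the symmetry $\mathcal{G}^*_{2,(a;N)}=\mathcal{G}^*_{2,(N-a;N)}$ and the scaling relation $\mathcal{G}^\ast_{2,(a,N)}(\ell\tau)=\mathcal{G}^\ast_{2,(\ell a,\ell N)}(\tau)$ and then states that ``these observations lead to the following properties.'' Your decomposition of $G_2^\ast$ into residue classes modulo $N$ together with these two facts is precisely what is intended, and your explicit handling of the convergence issue via the $s$-regularized series is a welcome bit of extra care that the paper leaves implicit.
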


 Using the Lipschitz summation formula, we can get the  $q$-expansion for $ \mathcal{G}^*_{k,(a;N)}$. See \cite[Theorem 2.6]{LLT}
 for details. We state the case of $k=2$ below. 
\begin{proposition}
\label{prop: G2astfourier}
For fixed $a$ and $N$, the function $ \mathcal{G}^*_{2,(a;N)}(\tau)$ has the $q$-expansion 
\begin{equation}
     \mathcal{G}^*_{2,(a;N)}(\tau)=-\frac{\pi}{2N\Im(\tau)}-2\pi^2\sum_{n>0}n\frac{q^{na}+q^{n(N-a)}}{1-q^{Nn}}.
\end{equation}

\end{proposition}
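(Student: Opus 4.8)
The plan is to interpret the conditionally convergent weight-two sum through Hecke's trick, exactly as the almost holomorphic series $G_2^*$ was obtained in \zcref{eq:g2astfourier}, and then read off the Fourier expansion one column at a time. Concretely, for $\Re(s)>0$ I would insert the convergence factor $\Im(\tau)^s\,|(Nm+a)\tau+n|^{-2s}$ into the defining sum, obtaining an absolutely convergent function that is real-analytic in $\tau$, and \emph{define} $\mathcal{G}^*_{2,(a;N)}(\tau)$ as its analytic continuation to $s=0$. This is the same regularization that produced the nonholomorphic term $-\pi/(2\Im(\tau))$ in $G_2^*$, and it is precisely what makes the claimed formula, rather than a naive rearrangement, correct.

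Writing $c\coloneqq Nm+a$, the inner sum over $n$ already converges absolutely at $s=0$ for each $c\neq 0$, so the holomorphic part is routine. By the Lipschitz summation formula $\sum_{n\in\Z}(c\tau+n)^{-2}=-4\pi^2\sum_{r\geq 1}r\,q^{r|c|}$, where for $c<0$ one first sends $n\mapsto -n$ to reduce to $\Im(|c|\tau)>0$. Summing over the column index $m$ and separating the columns $c>0$ (for which $|c|=a+jN$, $j\geq 0$) from $c<0$ (for which $|c|=(N-a)+jN$, $j\geq 0$), the two geometric series in $j$ collapse via $\sum_{j\geq 0}q^{rjN}=(1-q^{rN})^{-1}$. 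Together with the overall factor $\tfrac12$ this yields exactly the holomorphic part $-2\pi^2\sum_{n>0}n\,(q^{na}+q^{n(N-a)})/(1-q^{Nn})$; note that a \emph{single} residue class $c\equiv a\pmod N$ already contributes both the $q^{na}$ and the $q^{n(N-a)}$ terms, arising from its positive and negative columns respectively.

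The delicate point is the nonholomorphic term, which comes not from any single $n$-sum but from the $s\to 0$ limit of the outer summation over $c$, where termwise evaluation at $s=0$ is illegitimate. Here I would carry the regularizing factor through the inner sum and isolate the zero-frequency Fourier mode of $I_c(s)=\sum_n (c\tau+n)^{-2}\,\Im(\tau)^s|c\tau+n|^{-2s}$, which is a Gamma-factor times $\Im(\tau)^{s}\,(|c|\,\Im(\tau))^{-1-2s}$. Summing this mode over $c\equiv a\pmod N$ converts $\sum|c|^{-1-2s}$ into a Hurwitz zeta function whose finite part at $s=0$ supplies the entire nonholomorphic constant. Since restricting the columns to one class modulo $N$ scales the relevant density by $1/N$, the outcome is exactly $1/N$ of the $G_2^*$ value $-\pi/(2\Im(\tau))$, namely $-\pi/(2N\Im(\tau))$, independent of $a$; this matches the consistency requirement that $\sum_{a=0}^{N-1}\mathcal{G}^*_{2,(a;N)}$ recover the nonholomorphic term of $G_2^*$.

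I expect this last extraction to be the main obstacle: justifying the interchange of the $c$-summation with the limit $s\to 0$ and computing the residual finite part uniformly in $a$. One may localize the congruence $c\equiv a\pmod N$ through the additive characters $\frac1N\sum_{\ell=0}^{N-1}e^{2\pi i\ell(c-a)/N}$, but this produces \emph{twisted} Eisenstein sums whose zero-mode analysis is essentially the same as above, so it does not genuinely circumvent the Hecke bookkeeping. A useful sanity check is afforded by \zcref{prop:curlyg2asttoclassicales}: when the nonzero classes fill out ``all columns minus the multiples of a divisor'' (as for $N=2,3$), the result reduces to differences such as $\mathcal{G}^*_{2,(1;2)}(\tau)=G_2^*(\tau)-G_2^*(2\tau)$, against which both the holomorphic part and the factor $-\pi/(2N\Im(\tau))$ can be verified directly from \zcref{eq:g2astfourier}.
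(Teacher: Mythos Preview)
Your proposal is correct and follows essentially the same approach as the paper, which simply invokes the Lipschitz summation formula and defers the details to \cite[Theorem~2.6]{LLT}. Your write-up is in fact more detailed than what the paper provides: the separation into positive and negative columns $c=Nm+a$, the geometric-series collapse, and the extraction of the nonholomorphic term via Hecke's trick are exactly the steps behind the cited result.
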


\bigskip

We now consider the action of the Fricke involution $ W_\ell $.  
For a positive integer $ \ell $, we take 
\begin{equation}
    W_\ell \;=\;
\begin{pmatrix}
0 & -1 \\
\ell & 0
\end{pmatrix} : \, \tau \mapsto -1/\ell \tau. 
\end{equation}
The operator $ W_\ell $ gives an involution on the space of modular forms on $ \Gamma_0(\ell)$.
Recall that if $f$ is a modular form of weight $k$ then  
\begin{equation}\label{eq:frickedefinition}
 (f \mid W_\ell)(\tau)
    \;=\;
    \ell^{k/2} (\ell \tau)^{-k}
    f(-1/\ell \tau),
    \qquad \tau \in \mathbb{H}.
\end{equation}

Using  the modularity of  $G_2^*$, one has the following identities.
\begin{proposition}\label{prop:G2astfricke}
     For $N$, $\ell \in \N$, we have 
     $$
      G_2^*(N\tau)\mid W_{\ell}=\frac{\ell}{N^2} G^*_2\left(\frac{\ell \tau}{N}\right), \quad \mbox{and} \quad  G_{2,N}(\tau)\mid W_{\ell}=-\frac{\ell}{N} G_{2,N}\left(\frac{\ell \tau}{N}\right).
      $$
\end{proposition}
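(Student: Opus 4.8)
The engine of the whole computation is that $G_2^*$, despite being only almost holomorphic, is a \emph{genuine} weight-$2$ modular form on $\SL_2(\Z)$: the non-holomorphic term $-\pi/2\Im(\tau)$ in \zcref{eq:g2astfourier} exactly cancels the quasimodular anomaly of the holomorphic Eisenstein series. This is visible from the Hecke-trick definition $G_2^*(\tau)=\lim_{s\to 0^+}G_2(s,\tau)$, since each $G_2(s,\tau)$ is a lattice sum that is manifestly $\SL_2(\Z)$-invariant of weight $2$ for $\Re(2s+2)>2$, and modularity is preserved in the limit. In particular I will only need the special case of the $S$-transformation,
\[
G_2^*(-1/w)=w^2\,G_2^*(w),\qquad w\in\H.
\]

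First I would prove the identity for $G_2^*(N\tau)$. Setting $g(\tau)=G_2^*(N\tau)$ (a weight-$2$ object) and applying the definition \zcref{eq:frickedefinition} with $k=2$ gives
\[
(g\mid W_\ell)(\tau)=\ell\,(\ell\tau)^{-2}\,g\!\left(-\tfrac{1}{\ell\tau}\right)=\frac{1}{\ell\tau^2}\,G_2^*\!\left(-\frac{N}{\ell\tau}\right).
\]
Now put $w=\ell\tau/N$, so that $-1/w=-N/(\ell\tau)$; the $S$-transformation yields $G_2^*(-N/(\ell\tau))=(\ell\tau/N)^2\,G_2^*(\ell\tau/N)$. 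Substituting and simplifying the powers of $\tau$ collapses the expression to $\tfrac{\ell}{N^2}\,G_2^*(\ell\tau/N)$, which is the first claimed identity.

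The second identity then follows purely formally. Since $\mid W_\ell$ is linear and $G_{2,N}(\tau)=N\,G_2^*(N\tau)-G_2^*(\tau)$, I would apply the first identity to each summand --- once with the given $N$ and once with $N=1$ for the term $G_2^*(\tau)$ (giving $G_2^*(\tau)\mid W_\ell=\ell\,G_2^*(\ell\tau)$). Collecting terms produces $\tfrac{\ell}{N}G_2^*(\ell\tau/N)-\ell\,G_2^*(\ell\tau)$. Finally, expanding $G_{2,N}(\ell\tau/N)=N\,G_2^*(\ell\tau)-G_2^*(\ell\tau/N)$ directly from the definition shows this equals $-\tfrac{\ell}{N}\,G_{2,N}(\ell\tau/N)$, completing the proof.

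There is no serious obstacle here; the argument is a short substitution once the right transformation law is in hand. The only point requiring genuine care is the first paragraph: one must be certain that the Hecke-trick limit defining $G_2^*$ transforms with weight $2$ \emph{without} the usual quasimodular anomaly (proportional to $c(c\tau+d)$), i.e.\ that the real-analytic correction term $-\pi/2\Im(\tau)$ is exactly what is needed. Everything after that is bookkeeping with the powers of $\tau$ and the factors $\ell/N$.
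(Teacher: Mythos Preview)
Your proof is correct and follows essentially the same approach as the paper: both rely on the genuine weight-$2$ modularity of $G_2^*$ under $\SL_2(\Z)$ (specifically $G_2^*(-1/w)=w^2G_2^*(w)$), apply the definition of $W_\ell$, and simplify. The only cosmetic difference is that the paper goes straight to $G_{2,N}\mid W_\ell$ and performs both applications of the $S$-transformation in a single line, whereas you isolate the first identity and then feed it into the second; the underlying computation is identical.
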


\begin{proof}
    By applying $W_{\ell}$, we get
    \begin{align}
        G_{2,N}\mid W_{\ell}(\tau)&=\frac{1}{\ell\tau^2}\left[NG_2^*\left(-\frac{N}{\ell \tau}\right)-G_2^*\left(-\frac{1}{\ell \tau}\right)\right]=\ell \left[\frac{1}{N}G_2^*\left(\frac{\ell \tau}{N}\right)-G_2^*(\ell \tau)\right]\\
        &=-\frac{\ell}{N}G_{2,N}\left(\frac{\ell\tau}{N}\right).
    \end{align}
    This completes the proof. 
\end{proof}

\begin{proposition}\label{prop:g2ast13fricke}
 For $\ell \in \N$, 
we have 
\begin{equation*}
     (\mathcal{G}^*_{2,(1;2)}\mid W_{\ell}) (\tau)=-\frac{\ell}4\mathcal{G}^*_{2,(1;2)}\left(\frac{\ell\tau}{2}\right)+\frac{3\ell}{4}G^*_2\left(\ell\tau\right)=-\ell\mathcal{G}^*_{2,(1;2)}\left(\frac{\ell\tau}{2}\right)+\frac{3\ell}{4}G^*_2\left(\frac{\ell\tau}{2}\right),
\end{equation*}
\begin{equation*}
         (\mathcal{G}^*_{2,(1;3)}\mid W_{\ell}) (\tau)=-\frac \ell9\mathcal{G}^*_{2,(1;3)}\left(\frac{\ell\tau}{3}\right)+\frac{4\ell}{9}G^*_2\left(\ell\tau\right) =-\ell\mathcal{G}^*_{2,(1;3)}\left(\frac{\ell\tau}{3}\right)+\frac{4\ell}{9}G^*_2\left(\frac{\ell\tau}{3}\right),
\end{equation*}
and 

\begin{equation*}
     (\mathcal{G}^*_{2,(1;4)}\mid W_{\ell}) (\tau)= -\frac{\ell}4\mathcal{G}^*_{2,(1;4)}\left(\frac{\ell\tau}{2}\right)+\frac{3\ell}{8}G^*_2\left(\ell\tau\right)=-\ell \mathcal{G}^*_{2,(1;4)}\left(\frac{\ell \tau}{2}\right)+\frac{3\ell}{8}G_2^*\left(\frac{\ell \tau}{2}\right). 
\end{equation*}

\end{proposition}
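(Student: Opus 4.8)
The plan is to reduce all three identities to the Fricke action on the classical series $G_2^*(N\tau)$ recorded in \zcref{prop:G2astfricke}, combined with the expressions for the lattice Eisenstein series in terms of $G_2^*$ supplied by \zcref{prop:curlyg2asttoclassicales}. Since the slash operator $\,\cdot\mid W_\ell$ is $\C$-linear on weight-$2$ objects, it may be applied termwise to any finite linear combination of the $G_2^*(m\tau)$.

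First I would treat $\mathcal{G}^*_{2,(1;2)}$. By \zcref{prop:curlyg2asttoclassicales} we have $\mathcal{G}^*_{2,(1;2)}(\tau)=G_2^*(\tau)-G_2^*(2\tau)$. Applying $\,\cdot\mid W_\ell$ termwise and invoking $G_2^*(N\tau)\mid W_\ell=\tfrac{\ell}{N^2}G_2^*\!\left(\tfrac{\ell\tau}{N}\right)$ for $N=1$ and $N=2$ gives
\[
(\mathcal{G}^*_{2,(1;2)}\mid W_\ell)(\tau)=\ell\,G_2^*(\ell\tau)-\frac{\ell}{4}\,G_2^*\!\left(\frac{\ell\tau}{2}\right).
\]
To recover the two displayed forms I would substitute $\tau\mapsto\tfrac{\ell\tau}{2}$ into the defining relation to obtain $\mathcal{G}^*_{2,(1;2)}\!\left(\tfrac{\ell\tau}{2}\right)=G_2^*\!\left(\tfrac{\ell\tau}{2}\right)-G_2^*(\ell\tau)$; eliminating $G_2^*\!\left(\tfrac{\ell\tau}{2}\right)$ from the right-hand side yields the first form, while eliminating $G_2^*(\ell\tau)$ yields the second. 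The remaining cases are identical in structure: for $\mathcal{G}^*_{2,(1;3)}(\tau)=\tfrac12\bigl(G_2^*(\tau)-G_2^*(3\tau)\bigr)$ the factor $\tfrac{\ell}{N^2}$ at $N=3$ produces the coefficients $\tfrac19$ and $\tfrac49$, and for $\mathcal{G}^*_{2,(1;4)}(\tau)=\tfrac12\mathcal{G}^*_{2,(1;2)}(\tau)$ the factor at $N=2$ produces $\tfrac14$ and $\tfrac38$, with the same regrouping used to pass between the two equivalent presentations.

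Because every step is linear and all ingredients are already in hand, there is no genuine analytic difficulty; the only point deserving care is the non-holomorphic correction term carried by $G_2^*$. This correction is already built into the Fricke identity of \zcref{prop:G2astfricke}, so treating $G_2^*(N\tau)\mid W_\ell$ as a black box keeps it consistent throughout, and the proof collapses to bookkeeping of the two regroupings described above.
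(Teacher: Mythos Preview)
Your proposal is correct and follows essentially the same approach as the paper: both arguments express the lattice Eisenstein series via \zcref{prop:curlyg2asttoclassicales}, apply the Fricke action termwise using \zcref{prop:G2astfricke}, and then regroup using the defining relation at the shifted argument to obtain the two equivalent presentations. Your explicit remark that the non-holomorphic correction is already absorbed into the identity of \zcref{prop:G2astfricke} makes the justification slightly more transparent than the paper's writeup.
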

\begin{proof} 
     Using 
     \zcref{prop:curlyg2asttoclassicales} and \zcref{{prop:G2astfricke}},
     we obtain
     \begin{align*}
    (\mathcal{G}^*_{2,(1;2)}\mid W_{\ell}) (\tau)&=
    G_2^\ast(\tau)\mid W_{\ell}-G_2^\ast(2\tau)\mid W_{\ell}={\ell} G^*_2\left(\ell \tau\right)-\frac{\ell}{4} G^*_2\left(\frac{\ell \tau}{2}\right)\\
    &=-\frac{\ell}4\mathcal{G}^*_{2,(1;2)}\left(\frac{\ell\tau}{2}\right)+\frac{3\ell}{4}G^*_2\left(\ell\tau\right),
\end{align*}
which can also be written as $-\ell\mathcal{G}^*_{2,(1;2)}\left(\frac{\ell\tau}{2}\right)+\frac{3\ell}{4}G^*_2\left(\frac{\ell\tau}{2}\right)$. 
Thus
$$
    (\mathcal{G}^*_{2,(1;4)}\mid W_{\ell}) (\tau)= -\frac{\ell}4\mathcal{G}^*_{2,(1;4)}\left(\frac{\ell\tau}{2}\right)+\frac{3\ell}{8}G^*_2\left(\ell\tau\right)=-\ell \mathcal{G}^*_{2,(1;4)}\left(\frac{\ell \tau}{2}\right)+\frac{3\ell}{8}G_2^*\left(\frac{\ell \tau}{2}\right). 
$$
Similarly,  for $\mathcal{G}^*_{2,(1;3)}$, we have 
\begin{align}\label{eq:curlygw3}
    2\mathcal{G}^*_{2,(1;3)}(\tau)\mid W_{\ell} &=G_2^\ast\mid W_{\ell}(\tau)-G_2^\ast\mid W_{\ell}(3\tau)={\ell} G^*_2\left(\ell \tau\right)-\frac{\ell}{9} G^*_2\left(\frac{\ell \tau}{3}\right)\\
    &=-2\ell\mathcal{G}^*_{2,(1;3)}\left(\frac{\ell\tau}{3}\right)+\frac{8\ell}{9}G^*_2\left(\frac{\ell\tau}{3}\right),
\end{align}
which gives the desired identity.
\end{proof}

\subsection{Twisting modular forms} 
We now introduce the twist of a modular form.  
\begin{definition} For a modular form $f(\tau)$ with Fourier expansion $\sum_{n \ge 0} a_n q^n$ and a Dirichlet character $\chi$, we define 
the twist $f\otimes \chi$ as 
\[
(f \otimes \chi)(\tau) \coloneqq \sum_{n \ge 0} a_n \chi(n) q^n.
\]  
\end{definition}
\begin{lemma}\label{Lemma: twist}
    Let $f$ be a modular form for $\Gamma_0(N)$ with Dirichlet character $\chi'$, and $\chi$ be another Dirichlet character. Then 
    \begin{enumerate}
    \item if $f$ is a cusp form, then so is $f\otimes \chi$; 
    \item the form $f \otimes \chi$ is of level $M$ with Dirichlet character $\chi'\cdot \chi^2$, where 
    $$
      M=\mbox{lcm} (N, \mbox{cond}{\chi'}\mbox{cond}{\chi}, \mbox{cond}{\chi}^2), 
    $$
    and $\mbox{cond}{\psi}$ is the conductor of the character $\psi$.
    \end{enumerate}
   
\end{lemma}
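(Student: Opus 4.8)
The plan is to realize the twist as an explicit finite combination of horizontal translates of $f$ and then transport the modularity of $f$ through the weight-$k$ slash action $f\mid_k\gamma\coloneqq(\det\gamma)^{k/2}(c\tau+d)^{-k}f(\gamma\tau)$, where $\gamma=\SM{\alpha}{\beta}{c}{\delta}$ and $k$ is the weight of $f$. Throughout I may assume $\chi$ is primitive modulo $m\coloneqq\mathrm{cond}(\chi)$, since only the primitive character attached to $\chi$ enters the Gauss-sum identity below and the target level is phrased in terms of conductors. Writing $g(\chi)=\sum_{a\bmod m}\chi(a)e^{2\pi i a/m}$ for the (nonzero) Gauss sum and $B_a\coloneqq\SM{1}{a/m}{0}{1}$, so that $(f\mid_k B_a)(\tau)=f(\tau+a/m)$, the primitive separation-of-variables identity $\chi(n)\,g(\overline\chi)=\sum_{a\bmod m}\overline\chi(a)\,e^{2\pi i an/m}$ (valid for all $n\in\Z$) rearranges the $q$-expansion of the twist into
\[
(f\otimes\chi)(\tau)=\frac{1}{g(\overline\chi)}\sum_{a\bmod m}\overline\chi(a)\,f\!\left(\tau+\tfrac{a}{m}\right).
\]

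The core of the argument is a matrix factorization. Fixing $\gamma=\SM{\alpha}{\beta}{c}{\delta}\in\Gamma_0(M)$, I would write $B_a\gamma=\gamma_a B_{a'}$ for some $\gamma_a\in\Gamma_0(N)$ and a residue $a'\equiv a\delta^2\pmod m$, by reading off the four entries of the product and solving for $\gamma_a$ and $a'$. Verifying that $\gamma_a$ is a genuine integral matrix lying in $\Gamma_0(N)$ forces exactly the three divisibilities packaged into $M$: the condition $N\mid M$ places the lower-left entry of $\gamma_a$ in $N\Z$; the condition $\mathrm{cond}(\chi)^2\mid M$ makes the upper-left entry satisfy $\alpha_a\equiv\alpha\equiv\delta^{-1}\pmod m$, which is precisely what renders the choice $a'\equiv a\delta^2$ consistent and integral; and the condition $\mathrm{cond}(\chi')\,\mathrm{cond}(\chi)\mid M$ forces the lower-right entry to satisfy $\delta_a\equiv\delta\pmod{\mathrm{cond}(\chi')}$, hence $\chi'(\delta_a)=\chi'(\delta)$.

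With the factorization in hand the transformation law is formal. Since $f\mid_k\gamma_a=\chi'(\delta_a)f=\chi'(\delta)f$, we obtain $f\mid_k(B_a\gamma)=\chi'(\delta)\,(f\mid_k B_{a'})$, and reindexing by the bijection $a\mapsto a'=a\delta^2\bmod m$ (a bijection because $\gcd(\delta,m)=1$) produces the factor $\overline\chi(a'\delta^{-2})=\chi(\delta)^2\,\overline\chi(a')$. Collecting everything gives
\[
(f\otimes\chi)\mid_k\gamma=\chi'(\delta)\,\chi(\delta)^2\,(f\otimes\chi)=(\chi'\chi^2)(\delta)\,(f\otimes\chi),
\]
which is at once the modularity under $\Gamma_0(M)$ and the claim that the nebentypus is $\chi'\chi^2$, settling part (2); holomorphy on $\H$ and at the cusps is inherited since each $f\mid_k B_a$ is the slash of a holomorphic modular form by an element of $\mathrm{GL}_2^+(\Q)$, and a finite sum of forms holomorphic at every cusp is holomorphic there.

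For part (1) I would use the criterion that a weight-$k$ holomorphic form $h$ on a congruence subgroup is a cusp form if and only if $\Im(\tau)^{k/2}\lvert h(\tau)\rvert$ is bounded on $\H$. Because $\Im(\tau)^{k/2}\lvert(f\mid_k B_a)(\tau)\rvert=\Im(\tau+\tfrac am)^{k/2}\lvert f(\tau+\tfrac am)\rvert$, each translate meets this bound whenever $f$ does, and therefore so does the finite combination $f\otimes\chi$; thus $f\otimes\chi$ is cuspidal. The main obstacle I anticipate is the bookkeeping in the matrix factorization—matching each factor of the lcm to the exact congruence it guarantees and tracking $\chi'(\delta_a)$ and $\overline\chi(a')$ without inverse or sign slips—together with justifying cleanly the reduction to primitive $\chi$, so that the conductor rather than the modulus governs $M$.
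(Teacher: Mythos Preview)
The paper does not prove this lemma; it is stated as a standard fact, with the references at the start of \zcref{sec:MF} (Cohen--Str\"omberg, Diamond--Shurman, Miyake, etc.) implicitly covering it. Your argument is the classical textbook proof: express the twist via the Gauss-sum identity as a finite combination of translates $f\mid_k B_a$, factor $B_a\gamma=\gamma_a B_{a'}$ with $a'\equiv a\delta^2\pmod m$, and read off that the three divisibilities packaged into $M$ are exactly those making $\gamma_a\in\Gamma_0(N)$ integral with $\chi'(\delta_a)=\chi'(\delta)$. Your bookkeeping is correct, and the boundedness criterion is the standard way to handle cuspidality.

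One caveat on your reduction to primitive $\chi$: if $\chi$ has modulus $m'>\mathrm{cond}(\chi)$, then $f\otimes\chi$ and $f\otimes\chi^*$ are genuinely different forms (they disagree at coefficients $a_n$ with a prime $p\mid n$, $p\mid m'$, $p\nmid\mathrm{cond}\chi$), so passing to $\chi^*$ changes the object, not merely the argument. The level formula in the lemma is phrased via conductors and is the correct level for the \emph{primitive} twist; your proof establishes exactly that case. Since the paper only twists by primitive quadratic characters $\chi_\ell=\left(\frac{\ell}{\cdot}\right)$, this is all that is needed.
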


For convenience, throughout, we will use $\chi_{_\ell}$ to denote the Dirichlet character  
\( \displaystyle
\chi_{_\ell} = \left( \frac{\ell}{\cdot} \right). 
\)

\subsection{Modular equations and special values} 

In this subsection, we provide  special values of a given hauptmodul on $\G_0(6)$ and relations between Eisenstein series.

\begin{proposition}\label{prop: gamma06}
Let 
$$
 u_6(\tau)=\frac{6\eta(2\tau)\eta(6\tau)^5+\eta(\tau)^5\eta(3\tau)}{12\eta(2\tau)\eta(6\tau)^5+\eta(\tau)^5\eta(3\tau)}.
$$
Then 
we have the identities
\begin{align*}
\frac{G_{2,2}(3\tau)}{G_{2,2}(\tau)}&=\frac{-2u_6(\tau)^2-2u_6(\tau)+1}{6u_6(\tau)^2-6u_6(\tau)-3},\\
\frac{G_{2,3}(2\tau)}{G_{2,3}(\tau)}&=u_6(\tau)^2. 
\end{align*}
The functions $x=u_6(\tau)$ and $y=u_6(3\tau)$ satisfy the modular equation
\begin{equation}\label{eq: modular equation}
    16x^3y^3 - 12(x^3y+xy^3) + 12x^2y^2  - 5(x^3-y^3) - 3(x^2y - xy^2) - 6(x^2+y^2) + 6xy+ 2=0.
\end{equation}

Moreover, we have the following special values:
\begin{align*}
&u_6\left(\frac{3+\sqrt{-15}}6\right)=\frac{\sqrt 5}2, \quad u_6(i/\sqrt 2)=\frac15+\frac{3\sqrt 6}{10}, \\
&u_6(i)=\frac{3^{3/4}\sqrt 2-1+\sqrt3}4, \quad u_6(i/3)=\frac{3^{3/4}\sqrt 2+1-\sqrt3}4. 
\end{align*}
\end{proposition}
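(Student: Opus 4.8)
Throughout I use that $X_0(6)$ has genus $0$ and that $u_6$ is a hauptmodul, so the field of modular functions on $\Gamma_0(6)$ is exactly $\C(u_6)$; the two Eisenstein identities and the modular equation then assert that certain modular functions coincide, and are provable by comparing $q$-expansions. For the first part, $G_{2,2}(\tau)$ and $G_{2,3}(\tau)$ are holomorphic weight-$2$ forms on $\Gamma_0(2)$ and $\Gamma_0(3)$, so $G_{2,2}(\tau),G_{2,2}(3\tau),G_{2,3}(\tau),G_{2,3}(2\tau)\in M_2(\Gamma_0(6))$, and each displayed ratio is a weight-$0$ modular function on $\Gamma_0(6)$, hence rational in $u_6$. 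To identify that rational function I would compute, from the Fourier expansions in \zcref{prop: G2astfourier} and \zcref{prop:curlyg2asttoclassicales} together with the eta-quotient $q$-expansion of $u_6$, the divisors of zeros and poles of numerator and denominator over the four cusps $\infty,0,\tfrac12,\tfrac13$ of $X_0(6)$; this bounds the degree of the rational function, after which matching finitely many leading $q$-coefficients forces equality, the pole/zero pattern of $G_{2,3}(2\tau)/G_{2,3}(\tau)$ being exactly that of $u_6^2$.

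\textbf{The modular equation.} Setting $\alpha=\mathrm{diag}(3,1)$, the function $y=u_6(3\tau)=u_6(\alpha\tau)$ is invariant under $\alpha^{-1}\Gamma_0(6)\alpha$, and $\C(u_6(\tau),u_6(3\tau))$ is the function field of the correspondence attached to $\Gamma_0(6)\cap\alpha^{-1}\Gamma_0(6)\alpha$. Hence $y$ is integral over $\C[x]$, with $x=u_6(\tau)$, of degree equal to the covering degree of this $3$-isogeny correspondence; because $3\mid 6$ this degree is $3$ (rather than $4$), consistent with the degree in each of $x,y$ visible in \eqref{eq: modular equation}. I would produce the polynomial by expanding $x$ and $y$ as $q$-series and eliminating, then confirm \eqref{eq: modular equation} vanishes to an order safely exceeding the number of monomials, which certifies the identity since both sides have controlled pole order on $X_0(6)$.

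\textbf{The special values.} For the Heegner point $\tfrac{3+\sqrt{-15}}{6}$ of discriminant $-15$, which lies on the fixed locus of an Atkin--Lehner involution, the ratio $G_{2,3}(2\tau)/G_{2,3}(\tau)$ is pinned to a rational number by that involution; via $u_6^2=G_{2,3}(2\tau)/G_{2,3}(\tau)$ this gives $u_6^2=\tfrac54$, whence $u_6=\tfrac{\sqrt5}{2}$, the positive branch being selected by the real $q$-expansion. For $\tau=i/\sqrt2$, the fixed point of $W_2$ on $X_0(6)$, I would determine the induced M\"obius action $M\colon u_6\mapsto u_6\mid W_2$ on the hauptmodul (from the Fricke computations in \zcref{prop:G2astfricke} and \zcref{prop:g2ast13fricke} applied to the Eisenstein presentation of $u_6$, or directly from the eta-quotient) and solve the fixed-point equation $M(u_6)=u_6$ there, whose relevant quadratic root is $\tfrac15+\tfrac{3\sqrt6}{10}$. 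The points $\tau=i$ and $\tau=i/3$ lie on no single Atkin--Lehner fixed locus, so here I would evaluate $u_6(\tau)^2=G_{2,3}(2\tau)/G_{2,3}(\tau)=\dfrac{3G_2^*(6\tau)-G_2^*(2\tau)}{3G_2^*(3\tau)-G_2^*(\tau)}$ at $\tau=i$ and $\tau=i/3$, reducing all arguments to $G_2^*(i),G_2^*(2i),G_2^*(3i),G_2^*(6i)$ by the Fricke relations of \zcref{prop:G2astfricke} and inserting the classical Chowla--Selberg singular values of the quasimodular $G_2^*$ at these four points. This determines $u_6(i)^2$ and $u_6(i/3)^2$, and \eqref{eq: modular equation} with $(x,y)=(u_6(i/3),u_6(i))$ together with the numerical $q$-expansion values fixes the signs, yielding the stated radical expressions.

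\textbf{The main obstacle.} The crux is the pair $i,i/3$: their values lie in the quartic field $\Q(\sqrt2,3^{1/4})$ (note the factor $3^{3/4}\sqrt2$), so neither a single Atkin--Lehner quadratic nor the cubic modular equation determines them in isolation. One must control the four CM values of $G_2^*$ precisely and then correctly single out the algebraic root among its conjugates; proving the exact radical form---rather than merely matching it numerically---is the delicate step, and it is here that the Fricke relations of \zcref{prop:G2astfricke} and the modular equation \eqref{eq: modular equation} must be used in tandem to rigidify the system.
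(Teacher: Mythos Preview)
Your approach to the Eisenstein ratios and to the modular equation is the same as the paper's: both reduce to comparing divisors and $q$-expansions of modular functions on $\Gamma_0(6)$ (the paper simply cites the standard method and omits the computation).

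Where you diverge is in the special values. The paper does not use Atkin--Lehner fixed loci or Chowla--Selberg values of $G_2^*$ at all. Instead it introduces the intermediate hauptmodul $t(\tau)=\eta(3\tau)^{12}/\eta(\tau)^{12}$ on $\Gamma_0(3)$ and records two polynomial relations:
\[
t=-\frac{4u_6^3-3u_6-1}{27(4u_6^3-3u_6+1)},\qquad
j(3\tau)=\frac{729t^4+756t^3+270t^2+36t+1}{t^3}.
\]
For each of $\tau=\tfrac{3+\sqrt{-15}}{6}$, $\tau=i/\sqrt2$, and $\tau=i$, the value $j(3\tau)$ is a known singular modulus (class number $\le 2$); one solves the quartic for $t(\tau)$, picks the correct root via the $q$-expansion, then solves the cubic for $u_6(\tau)$ the same way. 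The value $u_6(i/3)$ is then read off from the modular equation \eqref{eq: modular equation} with $x=u_6(i/3)$, $y=u_6(i)$. This is uniform across all four points and completely sidesteps what you flag as the ``main obstacle'': no quasimodular CM values, no period ratios, and no delicate sign/branch analysis beyond matching a numerical approximation to a root of an explicit polynomial over $\Q$.

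Your route is plausible in outline, but several of the steps you sketch would need real work to make rigorous. The assertion that $G_{2,3}(2\tau)/G_{2,3}(\tau)$ is ``pinned to a rational number'' at $\tfrac{3+\sqrt{-15}}{6}$ by an Atkin--Lehner involution is not obviously true as stated (you would need to exhibit the involution, compute its action on $u_6$, and argue why the resulting fixed-point equation has a rational solution there). Likewise, extracting the exact radicals for $u_6(i)$ and $u_6(i/3)$ from Chowla--Selberg values of $G_2^*$ at $i,2i,3i,6i$ is doable but circuitous compared to solving two explicit polynomials. The paper's $j$-to-$t$-to-$u_6$ chain is the simpler and more robust argument here.
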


\begin{remark}
    Note that $u_6 = \frac{6\widetilde{u}+1}{12\widetilde{u}+1}$ is a hauptmodul for $\G_0(6)$, where 
    $$\widetilde{u}(\tau)=\eta(2\tau)\eta(6\tau)^5/\eta(\tau)^5\eta(3\tau)$$  is a known hauptmodul for $\G_0(6)$ (see \cite[Table $3$]{sebbarhaupt} for example) and takes values at cusps as follows.
    $$
       \begin{array}{c|cccc} \hline
         \mbox{cusp}& 0&1/3&1/2&i\infty\\\hline
         u_6(\tau)& 1/2&-1/2&-1&1\\\hline
       \end{array}
    $$The purpose we make this choice is to have simpler expressions for later use.  
    We also remark that the special value $u_6(i)$ can be obtained by the special values of $\eta$-functions (cf. \cite{ramanujannotebook3, ramanujannotebook5, cohen, LLT}). 
\end{remark}
\begin{proof}
A standard method to get relations between modular functions  is to compare the divisor functions and $q$-expansions of the target functions; see  \cite{book:koblitz} for example.  We omit the computations. 

To obtain the special value $u_6\left(\frac{3+\sqrt{-15}}6\right)$ and $u_6(i)$, we make use of the modular function  
$t=\frac{\eta(3\tau)^{12}}{\eta(\tau)^{12}}$ on $\G_0(3)$ and the relations
\begin{align*}
t&=-\frac{4u_6^3-3u_6-1}{27( 4u_6^3-3 u_6+1)},\\
j(3\tau)&=\frac{729t(\tau)^4+756t(\tau)^3+270t(\tau)^2+36t(\tau)+1}{t(\tau)^3}. 
\end{align*}

When $\tau=\frac{3+\sqrt{-15}}6$, the known $j$-value $j\left(\frac{3+\sqrt{-15}}2\right)=-85995\left(\frac{1+\sqrt5}2\right)-52515$ gives us that
$t\left(\frac{3+\sqrt{-15}}6\right)=\frac{\sqrt 5-3}{54}$ by identifying the exact root using the approximation through the $q$-expansion.  Further, we can determine the value  $u_6\left(\frac{3+\sqrt{-15}}6\right)=\frac{\sqrt 5}2$ using the relation between $t$ and $u_6$.

Similarly, we can use the special values $j(i/\sqrt 2)=8000$, $j(i)=1728$, and the relation between $j(\tau)$ and $j(3\tau)$ with the equation given above to have $t(i/\sqrt{2})=\frac1{729}(5+2\sqrt 6)$ and $t(i)=\frac1{243}(-3+2\sqrt 3)$. Hence, we have $u_6(i/\sqrt 2)=\frac1{10}(2+3\sqrt 6)$ and $u_6(i)=\frac{3^{3/4}\sqrt 2-1+\sqrt3}4$ as desired.   \zcref{eq: modular equation} gives the value $u_6(i/3)$. 
\end{proof}

\subsection{Relation between Hypergeometric functions and Modular forms} \label{sec:RB} In this section, we introduce certain Ramanujan alternative bases. 
 
 For a finite index subgroup $\G\subseteq \SL_2(\Z)$, let $t$ be a modular function on $\G$ and $f$ be a 
 meromorphic modular function of weight $k$ on $\G$.  Thus, we can write locally $f(\tau)=F(t(\tau))$ for some function $F$ and that $F$ satisfies a linear differential equation of order $k+1$ with algebraic coefficients (see \cite[Proposition 5.3.36]{book:cohen-stromberg} and \cite{Yang-modular}).  
 For example,   the 
 function $E_{2k}^{1/(2k)}$ with $k\geq 2$ satisfies a linear differential equation of order $2$ with three regular singularities, which is a second order hypergeometric differential equation. For instances,
 \begin{align}\label{eq:e4e6tohgf}
     E_4^{1/4}(\tau)=\pFq{2}{1}{\frac{1}{12}&\frac{5}{12}}{&1}{\frac{1728}{j(\tau)}}. 
 \end{align}    
 Applying Clausen formula \eqref{eq:clausen}, one has 
 \begin{align}
     E_4^{1/2}(\tau)=\pFq{3}{2}{\frac{1}{2}&\frac{1}{6}&\frac{5}{6}}{&1&1}{\frac{1728}{j(\tau)}}=\left(1-4t_2(\tau)\right)^{\frac{1}{2}}\pFq32{\frac12&\frac12&\frac12}{&1&1}{t_2(\tau)},
 \end{align}
 where $t_2(\tau)=-64\frac{\eta(2\tau)^{24}}{\eta(\tau)^{24}}$. The last identity is parallel to the identity \zcref{prop:hgtransformations} with $z=t_2(\tau)$.

Similar to the case of $\SL_2(\Z)$, the monodromy group  of the differential equation $HDE(1/d,1-1/d,1;t)$ in \eqref{eq:hde}
can be realized as the subgroups $\G_0(4), \G_0(3)$ and $\G_0(2)$ for $d=2,3$ and $4$ respectively. Applying Clausen formula and quadratic formula \zcref{eq:clausen}, we can express modular forms on the groups $\G_d$ listed in \zcref{tab:haupymodul} as ${}_3F_2$-series with the given hauptmodul $t_d$ \cite{EHMMI,  WIN5, Imin-1, Imin-3}, when $\tau$ in the fundamental domain provided in \zcref{fig:fundamentaldomains} and the series ${}_3F_2(t_d(\tau))$ converges.

	\begin{table}[h]
		\begin{center}
			\scalebox{1.0}{
				\begin{tabular}{c c c c }
					\toprule
					\makecell{$d$}& \makecell{$\G_d$} & \makecell{$t_d(\tau)$} & \makecell{ $\pfq{3}{2}{\frac{1}{2}&\frac{1}{d}&\frac{d-1}{d}}{&1&1}{t_d(\tau)}$}\\
					\midrule
					$2$ & $\G_0(2)$ & $-64\frac{\eta(2\tau)^{24}}{\eta(\tau)^{24}}$ & $\theta_4(2\tau)^4$\\

                    $3$ & $\G_0(3)^+$ & $\frac{108\eta(\tau)^{12}\eta(3\tau)^{12}}{(\eta(\tau)^{12}+27\eta(3\tau)^{12})^2}$ & $\frac{3}{\pi^2}G_{2,3}(\tau)$\\

                    $4$ & $\G_0(2)^+$ & $\frac{256\eta(\tau)^{24}\eta(2\tau)^{24}}{(\eta(\tau)^{24}+64\eta(2\tau)^{24})^2}$ & $\frac{6}{\pi^2}G_{2,2}(\tau)$\\

                    $6$ & $\G_0(1)$& $\frac{1728}{j(\tau)}$ & $E_4(\tau)^{1/2}$
					
                    \\ \bottomrule
					
			\end{tabular}}
			\caption{\centering Hypergeometric functions as modular forms 
			}
			\label{tab:haupymodul}
		\end{center}
	\end{table}

\medskip
\section{Galois Representations}\label{sec:Galois} 

In CM theory,
Hecke characters arise as fundamental building blocks.  In~\cite[et al.]{AOP, TuYang2018}, Hecke characters, together with the modularity of hypergeometric functions, are used to obtain explicit evaluations of hypergeometric functions in terms of Hecke character values.  Relevant discussions can be found in, for example, \cite{AO00, PA-KQ, SRiM,  BKS,  WIN2, LD-McCarthy,McCarthy-Splitting, Sulakashna-Barman}. 

In this section, we illustrate how the $\ell$-adic representations of $G_\Q\coloneqq\Gal(\overline{\Q}/\Q)$
attached to modular forms, CM elliptic curves, and hypergeometric data are interconnected. In particular, we see how their Frobenius traces encode the same arithmetic data, such as Hecke eigenvalues, values of Hecke characters, and finite-field hypergeometric sums—thereby providing explicit bridges between modularity, complex multiplication, and hypergeometric motives.

\subsection{Hecke Characters and Hecke $L$-functions}\label{heckecharacters} 
\sloppy Hecke characters, also identified as Gr\"ossencharaktere or id\'ele class characters for number fields,  generalize the notion of Dirichlet characters and have associated $L$-functions that also satisfy functional equations, following the treatment in \cite{Ireland-Rosen} and \cite{book:silverman}; other references include \cite{Neukirch, Raghuram-HeckeNotes}. In what follows, we provide a brief overview and present an example to demonstrate more explicit connections to the CM theory.

Let \(K\) be a number field  with ring of integers
\(\mathcal{O}_{K}\).  
A modulus \(\mathfrak{m}\) is a nonzero ideal of \(\mathcal{O}_{K}\).  Let \(I_{K}(\mathfrak{m})\) denote the group of fractional ideals of \(K\) that are
coprime to \(\mathfrak{m}\), and let \(P_{K,1}(\mathfrak{m})\) be the subgroup of
principal ideals \((\alpha)\) satisfying the congruence
$\alpha \equiv 1 \mod{\mathfrak{m}}$. 

A {Hecke character} modulo \(\mathfrak{m}\) is a group homomorphism
\[
\psi : I_{K}(\mathfrak{m}) \longrightarrow \C^{\times}
\]
that factors through the ray class group $\mathrm{Cl}_{\mathfrak{m}}\coloneqq I_{K}(\mathfrak{m}) \big/ P_{K,1}(\mathfrak{m})$, and is determined by
a pair \((\psi_{f}, \psi_{\infty})\) associated with the modulus
\(\mathfrak{m}\) and infinity type \( \{(n_{v})\}_{v \mid \infty}\), where \(n_{v} \in \Z\). 
The finite component is a finite-order character
\[
\psi_{f} : (\mathcal{O}_{K} / \mathfrak{m})^{\times} \longrightarrow \C^{\times},
\]
and for any \(\alpha \in P_{K,1}(\mathfrak{m})\), the archimedean component is defined by
\[
\psi_{\infty}(\alpha)
    \coloneqq \prod_{v \mid \infty} \sigma_{v}(\alpha)^{\,n_{v}}.
\]
For a principal ideal \((\alpha)\) with \((\alpha, \mathfrak{m}) = 1\),  the value of
\(\psi\) is given by
\[
\psi\big((\alpha)\big)
    \;=\;
      \psi_{f}(\alpha)\, \psi_{\infty}(\alpha).
\]
The {weight} of the Hecke character $\psi$ is $m=\sum_{v\mid\infty}n_v$.

Let $\psi$ be a Hecke character of conductor $\mathfrak{f}$, which is the minimum modulus such that $\psi$ factors through it. The associated Hecke $L$-function is defined 
by
\[
L(s, \psi) = \prod_{\mathfrak{p} \nmid \mathfrak{f}} \left( 1 - \frac{\psi(\mathfrak{p})}{N\mathfrak{p}^{\,s}} \right)^{-1}, \quad \Re(s) > 1,
\]
where $N\mathfrak{p}$ is the absolute norm of $\mathfrak{p}$. It admits meromorphic continuation to $\mathbb{C}$ and satisfies a functional equation relating $L(s,\psi)$ and $L(m+1-s,\psi)$. 
When the field $K$ is an imaginary quadratic field and $\psi$ is non-trivial, the converse theorem for modular forms \cite{book:winnieli, weilmodularity} implies that, there exists a cuspidal Hecke eigenform $f_{\psi}(\tau)$ 
of weight $m+1$ such that 
\begin{equation}
    L(f_{\psi},s)=L\left(s,\psi\right). 
\end{equation}
 
\medskip

We now recall the connection between CM elliptic curves and Hecke characters. For simplicity, we give the statement for curves have CM by maximal imaginary quadratic orders \cite{lang-CM, book:silverman}. 
\begin{theorem}[Deuring]\label{thm:Deuring-simple}
Let $E$ be an elliptic curve defined over a number field $F$  of conductor $\mathfrak{f}$ and having complex multiplication  by the ring of integers 
$\mathcal{O}_K$ of an imaginary quadratic field $K$.

\begin{enumerate}[(a)]
\item If $K \subset F$, then there is a Hecke character 

$$\psi_{E/F} : I_F(\mathfrak{f}) / K^\times \longrightarrow \mathbb{C}^\times
$$
such that
$$
L(E/F, s) \;=\; L(s, \psi_{E/F})L(s, \overline{\psi_{E/F}}).
$$

\item If $K \not\subset F$, let $L = F K$ and let

$$\psi_{E/L} : I_{L}(\mathfrak{f}) / K^\times \longrightarrow \mathbb{C}^\times
$$
be the Hecke character attached to $E/L$. Then
$$
L(E/F, s) \;=\; L(s, \psi_{E/L}).
$$
\end{enumerate}
Here $L(E/F, s)$ is the $L$-function of $E/F$.
\end{theorem}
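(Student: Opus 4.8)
The plan is to deduce both parts from the theory of complex multiplication via the $\ell$-adic Galois representation on the Tate module. For a prime $\ell$, write $V_\ell(E)=T_\ell(E)\otimes_{\Z_\ell}\Q_\ell$ for the rational Tate module, a two-dimensional $\Q_\ell$-vector space carrying the representation $\rho_\ell\colon G_F\to\GL(V_\ell(E))$, and recall that $L(E/F,s)$ is assembled from the reciprocal characteristic polynomials of Frobenius acting on $V_\ell(E)$ at the primes of good reduction. The whole argument is then a matter of making $\rho_\ell$ explicit in terms of a Hecke character and reading off the Euler products.

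For part (a), I would first use the hypothesis $K\subset F$ to observe that the CM embedding $\mathcal{O}_K\hookrightarrow\mathrm{End}_F(E)$ is defined over $F$, so its action commutes with $\rho_\ell$. This upgrades $V_\ell(E)$ to a free module of rank one over $K\otimes_\Q\Q_\ell$, forcing $\rho_\ell$ to be abelian with image in $(K\otimes_\Q\Q_\ell)^\times$. Passing to the abelianization of $G_F$ and invoking the main theorem of complex multiplication (in the form given in Silverman), this abelian system is precisely cut out by a Hecke character $\psi_{E/F}$ of $F$ with values in $K^\times$, whose infinity type is determined by the CM type and whose conductor is controlled by $\mathfrak{f}$. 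Deuring's reduction theory then matches the two Frobenius eigenvalues at a good prime $\mathfrak{p}$ with $\psi_{E/F}(\mathfrak{p})$ and its complex conjugate, so the local factor factors as $(1-\psi_{E/F}(\mathfrak{p})N\mathfrak{p}^{-s})^{-1}(1-\overline{\psi_{E/F}}(\mathfrak{p})N\mathfrak{p}^{-s})^{-1}$; taking the Euler product yields $L(E/F,s)=L(s,\psi_{E/F})L(s,\overline{\psi_{E/F}})$.

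For part (b), I would base change to $L=FK$, a quadratic extension of $F$, where part (a) now applies and produces the Hecke character $\psi_{E/L}$. The nontrivial element of $\mathrm{Gal}(L/F)$ acts through complex conjugation on $K$, interchanging the two characters appearing over $L$; representation-theoretically this says that $\rho_\ell$, viewed as a representation of $G_F$, is the induction $\Ind_{G_L}^{G_F}\chi$ of the one-dimensional character $\chi$ of $G_L$ attached to $\psi_{E/L}$. The Artin formalism for $L$-functions (inductivity) then gives $L(E/F,s)=L(\Ind_{G_L}^{G_F}\chi,s)=L_L(\chi,s)=L(s,\psi_{E/L})$, the final equality being the standard identification of the Artin $L$-function over $L$ with the Hecke $L$-function.

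The main obstacle is the main theorem of complex multiplication itself: everything hinges on identifying the a priori abstract abelian representation $\rho_\ell$ with a concrete Hecke character of the correct infinity type and conductor, and on the compatibility---supplied by Deuring's theory of good reduction of CM curves---between Frobenius eigenvalues and Hecke character values at the unramified primes. Once that dictionary is in place, the factorization in (a) and the inductive computation in (b) are formal.
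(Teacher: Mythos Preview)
The paper does not prove this statement at all: it is quoted as a classical result of Deuring, with a citation to Silverman's book, and is used only as background input. So there is no ``paper's own proof'' to compare against.

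That said, your outline is the standard route and is essentially what one finds in Silverman: use the $\mathcal{O}_K$-action to make $V_\ell(E)$ a rank-one $K\otimes\Q_\ell$-module, deduce that $\rho_\ell$ is abelian, identify it with a Hecke character via the main theorem of CM and Deuring's reduction theory, and then in case (b) recognize $\rho_\ell$ as an induction and invoke Artin formalism. The one point to be careful about is exactly the one you flag at the end: the real content is the main theorem of complex multiplication together with the Frobenius--Hecke compatibility at good primes, and your sketch quotes this rather than proves it. For the purposes of this paper that is entirely appropriate, since the authors themselves simply cite the result.
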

We will only require part (b) of this result, since our elliptic curves are defined over totally real fields; however, we state the full result for completeness. The generalized treatment, including the case when $E$ has CM by a non-maximal order $\mathcal{O}$, can be found in \cite{rubin-silverberg} for example. 

\begin{remark}
For a $\mathrm{CM}$ elliptic curve \(E\), the associated Hecke character
\(\psi_{E}\) has weight \(1\) and corresponds to a modular form
of weight \(2\).  More generally, a CM modular form of weight \(k\) arises from a
Hecke character of weight \(k-1\). 
\end{remark}

\begin{example}\label{ex:EC32}
Consider the elliptic curve
\(
E:\; y^2 = x^3 - x
\)
defined over $\Q$, which has CM by  \(\Z[i]\). For a  prime \(p\geq 3\), let $\#E(\F_p)$ denote the number of $\F_p$-rational points on $E$. Then 
\[
\#E(\F_p) = p+1-a_p(E), 
\]
with 
\[
a_p(E)\coloneqq -\sum_{x\in \F_p}\Big(\tfrac{x^3-x}{p}\Big) = \begin{cases}
-\chi(-1) J(\chi,\chi)-\overline{\chi(-1) J(\chi,\chi)}, & p\equiv 1\mod{4}, \\[6pt]
0, & p\equiv 3\mod{4},
\end{cases}
\]
where \(\chi\) is a fixed order $4$ character on $\F_p$ and \(J(\chi,\chi)\) is the Jacobi sum \(J(\chi,\chi):=\displaystyle\sum_{a\neq 0,1}\chi(a)\chi(1-a)\).  
For a prime $p \equiv 1 \mod{4}$, write $p=\pi \overline{\pi}$ with 
\(\pi \equiv 1 \mod{(2+2i)}\).
Take
$
\chi \;\coloneqq\; \Big(\tfrac{\cdot}{\pi}\Big)_4
$
on the residue field at the prime ideal $(\pi)$, 
where 

$
\Big(\tfrac{x}{\pi}\Big)_4 \;=\; x^{\,\tfrac{N(\pi)-1}{4}} \mod{\pi}.
$
Then we have
\[
-\chi(-1)J(\chi,\chi)=\pi.
\]
The associated Hecke character \(\psi_E\colon S\to\C^\times\), defined on the ideals
$
S=\{\mathfrak{J}\subseteq\O_K : (\mathfrak{J},2)=1\},
$
for a prime ideal $\mathfrak{p}\in S $ satisfies
\[
\psi_{E}(\mathfrak{p})=
\begin{cases}
\pi, & \mathfrak{p}=(\pi)\quad\text{ for }p\equiv 1\mod 4 \\[6pt]
-p, & \mathfrak{p}=(p)\quad\text{ for }p\equiv3\mod 4.
\end{cases}
\]
The cusp forms  associated with the Hecke characters $\psi_E$ and $\psi_E^2$ are given by 
\begin{equation}
    f_{32.2.a.a}(\tau)=\sum_{\mathfrak{I}\in S}\psi_E(\mathfrak{I})q^{N(\mathfrak{I})}=\sum_{m,n\in\Z}(-1)^n(4m+1-2ni)q^{(4m+1)^2+4n^2},
\end{equation}
\begin{align}
    f_{16.3.c.a}(\tau)&=\sum_{\mathfrak{I}\in S}\psi_E^2(\mathfrak{I})q^{N(\mathfrak{I})}=\sum_{m,n\in\Z}(4m+1-2ni)^2q^{(4m+1)^2+4n^2}= \frac 12\sum_{m,n\in \Z}(m+2ni)^2q^{m^2+4n^2}.
\end{align}

\end{example}

\medskip

\begin{example}\label{example:e27} 
Consider the $\Q$-elliptic curve 
\[
E:   y^2+xy-\frac y{216} =x^3, 
\]
which is $\widetilde{E}_3(-1/8)$, has conductor $27$, CM by $\Z[\frac{1+\sqrt{-27}}2]$, and is $\Q$-isogenous to $y^2+y=x^3$.  
For a prime $p\equiv1\mod{3}$, we  write $p=\pi\overline{\pi}$ in $\Z[\zeta_3]$, where $\pi\equiv2\mod{3}$ is a primary prime. 
The associated Hecke character \(\psi_E\) satisfies
\[
\psi_E(\mathfrak{p}) \;=\; 
\begin{cases}
\pi, & \mathfrak{p}=(\pi),\quad \text{ for  } p \equiv1\mod 3 , \;\; \\ 
-\,p, & \mathfrak{p}=(p), \quad \text{ for  } p \equiv 2 \mod 3. \;\;
\end{cases}
\]
The cusp forms  associated with the Hecke characters $\psi_E$ and $\psi_E^2$ are given by 
\begin{equation}
    f_{27.2.a.a}(\tau) =\sum_{\mathfrak{I}\in S'}\psi_E(\mathfrak{I})q^{N(\mathfrak{I})}=\sum_{m,n \in \Z}(3m+2+3n \zeta_3)q^{|3m+2+3n \zeta_3|^2},
\end{equation}
\begin{equation}
     f_{27.3.b.a}(\tau) 
     =\sum_{m,n\in \Z}(3m+2+3n \zeta_3)^2q^{|3m+2+3n \zeta_3|^2}=\frac12\sum_{m,n\in \Z}(m+3n\zeta_3)^2q^{|m+3n \zeta_3|^2},
\end{equation}
where $S'$ is the set of integral ideals of $\Z[\zeta_3]$ which are coprime to $3$. 
\end{example}

\subsection{Hypergeometric  Galois representations}\label{sec:modofhgr}

For multi-sets $\alpha=\{a_1,\cdots,a_n\}$,  $\beta=\{b_1\coloneqq1, b_2,\cdots ,b_n\}$ of $\Q$, let $\HD = \{\alpha; \beta\}$ denote a hypergeometric datum, and let  
$M = \mathrm{lcd}(\alpha \cup \beta)$ be the least positive common denominator of the entries in $\alpha$ and $\beta$.  
The datum $\HD$ is said to be \emph{primitive} if $a_i - b_j \notin \mathbb{Z}$ for all pairs $(i,j)$. Let \(c \in (\mathbb{Z}/M\mathbb{Z})^{\times}\), 
 define 
\(
cHD \coloneqq \bigl(\{\, c\alpha \bmod \mathbb{Z} \,\},\; \{\, c\beta \bmod \mathbb{Z} \,\}\bigr).
\)
We say that \(HD\) is \emph{defined over \(\mathbb{Q}\)} if, for every such \(c\), datum satisfies
\(
HD \equiv cHD \mod{\mathbb{Z}}.
\)

For a finite field $\F_q$ 
with $M \mid (q - 1)$, one can define  finite-field hypergeometric function $H_q(\HD; \lambda)$ for $\lambda \in \F_q$ 
using character sums (see \cite{BCM, Katz, Katz96}).  
This construction gives rise to $\ell$-adic Galois representations. We state here the results with the data defined over $\Q$ and $b_i=1$ for our cases. 

\begin{theorem}[\cite{BCM, Katz, Katz96, LLT2}] \label{katz BCM}
Let  $\ell$ be a prime. Given a  primitive   datum  $HD\coloneqq\{\alpha=\{a_1,\cdots,a_n\},\,  \beta=\{1,\cdots, 1\}\}$ defined over $\Q$ with  $M = \rm{lcd}(\alpha)$, for any  $t \in \Z[1/M] \backslash \{0\}$, there exists an $\ell$-adic Galois representation 
$$ 
  \rho_{HD,t}: \, G_\Q\coloneqq\Gal(\ol \Q/\Q)\longrightarrow GL(W_{t}),
$$ unramified almost everywhere, such that at each prime $p$ of $ \Z[1/(M\ell t)]$,
\begin{equation*} 
   \mbox{Tr} \rho_{HD,t}(\text{Frob}_p)=  
   H_p(\alpha,\beta; 1/t)  \quad \in \Z,  
\end{equation*} 
where  
$\text{Frob}_p$ is the  geometric  Frobenius conjugacy class of $G_\Q$ at $p$. 
\begin{enumerate}
\item When $t\neq 1$, $\dim_{\overline \Q_\ell}W_{t} = n$. All roots of the characteristic polynomial of $\rho_{HD,t}(\Frob_p)$  are algebraic with the same absolute value $p^{(n-1)/2}$. Moreover, $W_t$ admits a symmetric (resp. alternating) bilinear pairing if $n$ is odd (resp. even). 
\item  When $t=1$, $\dim_{\overline \Q_\ell}W_{1} = n-1$.  Moreover, $\rho_{HD,1}$ has a subrepresentation $\rho^{prim}_{HD,1}$ of dimension  $2\lfloor \frac {n-1}2 \rfloor$; the representation space of $\rho^{prim}_{HD,1}$  admits a symmetric (resp. alternating) bilinear pairing if $n$ is odd (resp. even).  All roots of the characteristic polynomial of $\rho_{HD,\ell}^{prim}(\Frob_p)$ have absolute value $p^{(n-1)/2}$.  
\end{enumerate}
\end{theorem}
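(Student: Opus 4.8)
The plan is to realize $\rho_{HD,t}$ as the specialization at the point $\lambda = 1/t$ of Katz's \emph{hypergeometric sheaf}, and to transport the sheaf-theoretic properties (lisse locus, purity, autoduality, and local monodromy) into the stated arithmetic conclusions. First I would attach to each upstairs parameter $a_i$ a multiplicative character $\chi_i$ of $\F_q^\times$ of order dividing $M$ (for any $q$ with $M \mid q-1$), determined by $a_i \bmod \Z$, and to the downstairs parameters $\beta = \{1,\dots,1\}$ the trivial character. Katz's construction then produces a sheaf $\mathcal{H}$ on $\mathbb{G}_m$, built as an iterated multiplicative $!$-convolution of Kummer sheaves $\mathcal{L}_{\chi_i}$ with an Artin--Schreier sheaf, which is lisse of rank $n$ on $\mathbb{G}_m \setminus \{1\}$, tame at $0,1,\infty$ in the present balanced $(n,n)$ case, and whose trace of geometric Frobenius at a point $\lambda$ equals, up to an explicit normalization, the finite-field hypergeometric sum $H_q(\alpha,\beta;\lambda)$. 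This is precisely the content of the Katz--\cite{BCM} dictionary.

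Next I would show that the hypothesis that $\HD$ is \emph{defined over} $\Q$ forces the family of sheaves over varying $q$ to descend to a single compatible system, equivalently a lisse $\ol{\Q}_\ell$-sheaf on an open subscheme of $\mathbb{G}_m/\Z[1/M\ell]$ whose Frobenius traces are Galois-stable and therefore lie in $\Z$; specializing at $\lambda = 1/t$ for $t \in \Z[1/M]\setminus\{0\}$ then yields the representation $\rho_{HD,t}$ of $G_\Q$, unramified away from $M\ell t$, with $\Tr\,\rho_{HD,t}(\Frob_p) = H_p(\alpha,\beta;1/t)$. The dimension statements come from the position of the specialization point relative to the singular locus. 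For $t \neq 1$ the point $1/t$ lies in the lisse locus $\mathbb{G}_m\setminus\{1\}$, so $W_t$ is the full rank-$n$ stalk. For $t = 1$ we are specializing at the singularity $\lambda = 1$; here I would invoke Katz's description of the local monodromy at $1$, which in the balanced case is a tame unipotent pseudoreflection (a single Jordan block of size $2$, identity on a hyperplane), so the monodromy invariants seen by the nearby fiber have dimension $n-1$, giving $\dim W_1 = n-1$. The further splitting off of the primitive piece $\rho^{prim}_{HD,1}$ of dimension $2\lfloor (n-1)/2\rfloor$ reflects that, precisely when $n$ is even, one Tate-type summand carrying a distinct Frobenius weight separates from the pure part.

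The purity and autoduality conclusions then follow from the structural properties of $\mathcal{H}$. Katz shows that $\mathcal{H}$, suitably normalized, is pure of weight $n-1$; Deligne's Weil~II gives that every Frobenius eigenvalue on the rank-$n$ stalk (resp.\ on the primitive part at $\lambda=1$) is algebraic of absolute value $p^{(n-1)/2}$ under each complex embedding. For the bilinear pairing, I would use that the condition $\HD \equiv c\,\HD$ for all admissible $c$, together with the self-inverse symmetry it imposes on $\alpha$, makes $\mathcal{H}$ isomorphic to its dual up to a Tate twist; Katz's autoduality theorem identifies this pairing as orthogonal or symplectic according to the parity governed by the rank, yielding a symmetric pairing when $n$ is odd and an alternating one when $n$ is even.

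The main obstacle is the $t = 1$ analysis: extracting the exact dimension $n-1$, isolating the primitive subrepresentation of dimension $2\lfloor(n-1)/2\rfloor$ with the correct purity, and pinning down the parity of the pairing. This requires a careful study of the local monodromy representation at $\lambda = 1$ and of the nearby and vanishing cycles there, tracking how the pseudoreflection interacts with the global autoduality and how the extra one-dimensional piece, present exactly when $n$ is even, carries its own Frobenius weight. Establishing the symmetric-versus-alternating dichotomy precisely, rather than merely the existence of a pairing, is the most delicate point, and is where I would lean most heavily on the explicit results of Katz and of Beukers--Cohen--Mellit.
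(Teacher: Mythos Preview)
The paper does not prove this theorem; it is quoted from the literature (Katz, Beukers--Cohen--Mellit, and \cite{LLT2}) and stated without proof as background for the subsequent discussion of the representations $\rho_{\HD_3(d,t)}$. There is therefore no in-paper argument to compare your proposal against.

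That said, your sketch is a faithful outline of how the result is established in those references: constructing Katz's hypergeometric sheaf on $\mathbb{G}_m$, reading off rank, purity, and autoduality from its sheaf-theoretic structure, and analyzing the degenerate fiber at $\lambda=1$ via local monodromy. Your identification of the $t=1$ case as the delicate point is accurate; the precise extraction of the primitive piece and the parity of the pairing are indeed where the real work lies, and you are right that these lean on explicit computations in Katz and in BCM rather than on general principles alone.
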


\medskip

	For each $d=2,3,4,6$, we consider  the following hypergeometric datum:
	$$
		 \mathrm{HD}_3(d,t)\coloneqq\left\{ \begin{array}{ccc} \frac12, &\frac1d, &1-\frac1d\\ 1,&1,&1\end{array}; t\right\}, \quad t \in \Q.
	$$
Then for a fixed pair $(d,t)$, by \zcref{katz BCM}, there exists a $3$-dimensional ($2$-dimensional when $t=1$) $\ell$-adic Galois
representation attached to the
datum $\HD_3(d,t)$,
\[
\rho_\ell \colon G_{\Q}\longrightarrow
GL_3(\overline{\Q}_\ell),
\]
unramified outside a finite set of primes (depending on $d$ and $t$), 
such that
for every good prime $p$, 
\[
\Tr\!\bigl(\rho_\ell(\Frob_p)\bigr)=H_p\!\bigl(\HD_3(d,t)\bigr).
\]
These representations can be further decomposed  using  $\F_q$-version of the Clausen formula \eqref{eq:clausen} (see \cite[Appendix]{SRiM} or \cite{LLL-low} for more details) and can be realized as part of the tensor product of the representations attached to the elliptic curves $\widetilde{E}_d(z)$ and $\widetilde{E}_d(1-z)$ arising from the datum 
$$
		 \mathrm{HD}_2(d,z)\coloneqq\left\{ \begin{array}{cc} \frac1d, &1-\frac1d\\ 1,&1\end{array}; z\right\},\quad  t=4z(1-z).
$$
Note that for $\l\in \F_q$, if $\widetilde{E}_d(\l)$ is an elliptic curve over $\F_q$,  we have
\[
\#\widetilde{E}_d(\l)(\F_q)=q+1-H_q\!\bigl(\HD_2(d,\l)\bigr)=q+1-\phi(k_d)H_q\!\bigl(\HD_2(d,1-\l)\bigr),
\]
where $\phi$ is the quadratic character on $\F_q$ and 
\[
k_2 = k_6 = -1,\qquad k_3=-3,\qquad k_4=-2.
\]

 We now divide the situation into two cases: For $z \in F$  with  $F=\Q$ or that $F$ is a real quadratic extension of $\Q$, to discuss the representation attached to $ \mathrm{HD}_3(d,t)$ with $t\neq 1$ and that $\widetilde{E}_d(z)$ has CM by a field $K=\Q(\sqrt{-D})$. The case $t=1$ can be interpreted similarly. 
\medskip
To ease the notation, for a fixed pair $(d,t)$, we set $z=\frac{1-\sqrt{1-t}}{2}$, and $\rho_{\mathrm{HD}_2,\ell}$  (resp.  $\rho_{\mathrm{HD}_3,\ell}$)  the $\ell$-adic representations of $G_F\coloneqq\Gal(\ol \Q/F)$  (resp. $G_\Q$) attached to $ \mathrm{HD}_2(d,z)$  (resp. $ \mathrm{HD}_3(d,t)$).  
\medskip

\paragraph{\bf Case 1: \(z \in \mathbb{Q}\) and $z\neq 1/2$ }
In this case, for a good prime $p$, the finite--field Clausen gives

\begin{equation}\label{FFClausen}
  H_p\!\left( \mathrm{HD}_3(d, t)
  \right)
  = 
  H_p\!\left(\mathrm{HD}_2(d, z)
  \right)^{\!2} 
  \;-\; p.
\end{equation}

Thus the hypergeometric trace on the left corresponds to the symmetric square  
of the \(2\)-dimensional \(GO\)-type Galois representation associated to \(\mathrm{HD}_2(d,z)\),  
\[
\rho_{\mathrm{HD}_2,\ell} \simeq \mathrm{Ind}_{G_K}^{G_\mathbb{Q}}(\psi),
\]
for a Hecke character \(\psi\) of \(K\). 
Consequently, as $G_\Q$ representations we have
\[
\rho_{\mathrm{HD}_3,\ell}
\simeq \, \mathrm{Sym}^2 (\rho_{\mathrm{HD}_2,\ell})
\simeq
\mathrm{Ind}_{G_K}^{G_\mathbb{Q}}(\psi^2)
\oplus 
\chi_{K}\cdot\epsilon_\ell,
\]
where $\epsilon_\ell$ is the cyclotomic character.   This yields
\[
\mathrm{Tr}\big(\rho_{\mathrm{HD}_3,\ell}(\mathrm{Frob}_p)\big)
=
\mathrm{Tr}\big( \mathrm{Ind}_{G_K}^{G_{\mathbb{Q}}}(\psi^2)(\mathrm{Frob}_p) \big)
\;+\;
{\left(\tfrac{-D}{p}\right)}p. 
\]

The representation $\mathrm{Ind}_{G_K}^{G_\mathbb{Q}}(\psi^2)$  is modular and corresponds to a weight--\(3\) Hecke eigenform $f=q+\sum_{n\geq 2}a_n(f) q^n$ with $a_p =\mathrm{Tr}\!\big( \mathrm{Ind}_{G_K}^{G_{\mathbb{Q}}}(\psi^2)(\mathrm{Frob}_p) \big)$.  
In other words,  if $N$ is the level of $f$, we have 
\begin{equation}\label{apCase1}
a_p(f)
=
H_p\!\left( 
\mathrm{HD}_3(d,t)
\right)
\;-\;
{\left(\tfrac{-D}{p}\right)}p, \quad p\nmid N.
\end{equation}

In  \zcref{example:e27}, the $p$th Fourier coefficient of the CM
newform $f_{27.3.b.a}$ is given by
\[
a_p\!\left(f_{27.3.b.a}\right)
=
H_p\!\left(
\left\{ \tfrac{1}{2}, \tfrac{1}{3}, \tfrac{2}{3} \right\},
\{1,1,1\};\, -\tfrac{9}{16}
\right)
\;-\;
\left(\tfrac{-3}{p}\right)p, \quad p>3.
\]
In terms of the Galois representation attached to the Hecke
character $\psi_E$ of $\Q(\sqrt{-3})$, we have
\[
a_p\!\left(f_{27.3.b.a}\right)
=
\Ind_{G_{\Q(\sqrt{-3})}}^{G_{\Q}}
\!\left(\psi_E^2\right)\!\left(\Frob_p\right)
=
\psi_E^2(\mathfrak{p}) + \psi_E^2(\overline{\mathfrak{p}}),
\]
where $\mathfrak{p}\mid p$ is a prime of $\Q(\sqrt{-3})$.

\medskip


\paragraph{\bf Case 2: \(z \in F\), a quadratic extension of $\Q$. } If $1-t$ is a square in $\F_p$ we have \zcref{FFClausen}; for $1-t$ non-square in $\F_p$, Clausen relation acquires a quadratic twist.  
A refined identity states 
\begin{equation}\label{FFClausen2}
H_p\!\left(HD_3(d,t)
\right)
\\[6pt]
=
\left(\tfrac{k_d}{p}\right)
H_{p^2}\!\left(
HD_2(d,z)
\right)
\;-\;
p, \quad \mbox{if } \left(\tfrac{1-t}{p}\right)=-1. 
\end{equation}

The expression on the left of \zcref{FFClausen2} corresponds to the symmetric square up to twist by a finite order character of a  \(2\)-dimensional Galois representation \cite{LLL-low}. 
Let $\psi_E$ be the Hecke character of \(FK\) associated to the CM elliptic curve \(E/F\). 
Similar to the first case, through the modularity, the associated weight \(3\) Hecke eigenform $f$ CM by $K$ satisfies  $a_p(f)=0$, for  $p$ inert in $K$, and 
\[
a_p(f)
=H_p(\mathrm{HD}_3(d,t))
\;-\;
\left(\tfrac{1-t}{p}\right)p   =\left(\tfrac{\kappa_d}p\right)\left(\psi_E(\mathfrak p)+\overline{\psi_E(\mathfrak p)}\right),
\]
for $p$ splits in $K$, and $\mathfrak p $ is a prime of $K$ lying above $p$, by viewing \(\psi_E\) as a character  of \(K\).
\begin{example}
Take \(d=2\). 
Consider the case CM-case of  $t_2(\sqrt{-2})=-1$ with $z=\frac{1-\sqrt{2}}{2}$.
The associated hypergeometric data are
\[
\mathrm{HD}_{2}:=\left\{ \begin{array}{cc} \frac12, &\frac 12\\ 1,&1\end{array};  \tfrac{1-\sqrt{2}}{2} \right\},
\qquad
\mathrm{HD}_{3}:=\left\{ \begin{array}{ccc} \frac12, &\frac 12, &\frac 12\\ 1,&1, &1\end{array};  1 \right\}.
\]

Let
\[
E:=\widetilde{E}_2(z):\; y^{2}=x(1-x)\Bigl(x-\tfrac{1-\sqrt{2}}{2}\Bigr).
\]
Then \(E\) is a CM elliptic curve defined over \(F=\Q(\sqrt{2})\) with CM by \(\Z[\sqrt{-2}]\), i.e., \(K=\Q(\sqrt{-2})\).  The isomorphism class labeled in LMFDB is \href{https://www.lmfdb.org/EllipticCurve/2.2.8.1/256.1/c/3}{$2.2.8.1-256.1-c3$}. 
Set
\[
L \coloneqq KF = \Q(\sqrt{2}, i)=\Q(\zeta_8).
\]
Let \(\psi_E\) denote the Hecke character of \(L\) associated to the CM elliptic curve
\(E/F\).

Suppose that $p$ is a good prime for the curve $E$. When $p\equiv \pm 1\mod 8$, we let  $p=\mathfrak{p}\mathfrak{p}^\sigma$ in \(F\), where $\sigma$ generates $\Gal(F/\Q)$. Further we let $\F_\wp$ be the residue field at $\wp$ and fix the choice of $\sqrt 2$ in $\F_\wp$ such that  $ p+1-\#E(\F_\wp)=H_p(\mathrm{HD}_2)$. If $p\equiv -1 \mod 8$, that is, $E$ is supersingular at $p$. Then  
$$
H_p(\mathrm{HD}_2)^2=0=H_p(\mathrm{HD}_3)+p. 
$$
If $p\equiv 1 \mod 8$, suppose that $\mathfrak B\ol{\mathfrak B} = \wp$ in $L$. Then 
$$
p+1-\#E(\F_\wp)=H_p(\mathrm{HD}_2) = \psi_E(\mathfrak B)+\overline{\psi_E(\mathfrak B)}
$$
and 
$$
H_p(\mathrm{HD}_3)-p= \psi_E^2(\mathfrak B)+\overline{\psi_E(\mathfrak B)}^2. 
$$
Therefore, for the weight $3$ modular form with $q$-expansion $f=q+\sum_{n\geq 2}a_n q^n$, we should have
$$
a_p(f)=H_p(HD_2)^2-\left(1+\left(\tfrac{-2}p\right)\right)p.
$$
Similarly, when $p\equiv \pm 3\mod 8$, we let $\mathfrak B\ol{\mathfrak B} = p$ in $L$.  One has
\[
a_p(f)
=
\left(\tfrac{k_d}{p}\right)H_{p^2}(\mathrm{HD}_2)
=
\left(\tfrac{-1}{p}\right)
\bigl(\psi_E(\mathfrak B)+\overline{\psi_E(\mathfrak B)}\bigr)=H_p(\mathrm{HD}_3)-\left(\tfrac{-1}p\right)p,
\]
since $\psi_E(\mathfrak B)= p$ when $\left(\tfrac{-2}p\right)=-1$. By explicit computation of Fourier coefficients, the corresponding modular form is
identified as \(f=f_{32.3.d.a}\).

\end{example}

\begin{example}
For $d=4$, consider the elliptic curve
\[
E:\ y^2=x\Bigl(x^2+x+\frac{z}{4}\Bigr),
\qquad
z=\frac12-\frac{\sqrt2}{3}. 
\]
Then $
t=4z(1-z)=\frac19.
$
The curve $E$ has CM by the maximal order of 
$
K=\Q(\sqrt{-6}),
$
and is defined over $F=\Q(\sqrt2)$. 
The isomorphism class of $E$ labeled in LMFDB is \href{https://www.lmfdb.org/EllipticCurve/2.2.8.1/81.1/b/3}{$2.2.8.1-81.1-b3$}. 

The associated hypergeometric data are
\[
\mathrm{HD}_{2}:=\left\{ \begin{array}{cc} \frac14, &\frac 34\\ 1,&1\end{array};  \frac12-\frac{\sqrt2}{3} \right\},
\qquad
\mathrm{HD}_{3}:=\left\{ \begin{array}{ccc} \frac12, &\frac 14, &\frac 34\\ 1,&1, &1\end{array};  \frac 19 \right\}.
\]
Again viewing $\psi_E$ as a Hecke character on the ideals of $K$, for primes $p$ with
$\bigl(\tfrac{-6}{p}\bigr)=1$, we conclude that the
$p$-th Fourier coefficient of the associated weight-$3$ modular form satisfies
$$
a_p=H_{p}(\mathrm{HD}_{3})
     \;-\; \left(\tfrac{2}{p}\right)p=
     \begin{cases}
&\psi_E(\mathfrak B \mathfrak B^\sigma)+ \overline{ \psi_E(\mathfrak B \mathfrak B^\sigma)}=\left(\tfrac{-2}p\right)\left(\psi_E(\mathfrak p)+\overline{\psi_E(\mathfrak p)}\right),
\quad \mbox{if }  \bigl(\tfrac{2}{p}\bigr)=1,\\
&\left(\tfrac{-2}p\right)\left(\psi_E(\mathfrak p)+\overline{\psi_E(\mathfrak p)}\right),
\quad \mbox{if }  \bigl(\tfrac{2}{p}\bigr)=-1. 
\end{cases}
$$
where $\wp$ is a prime ideal of $K$ lying above $p$,   $\wp =\mathfrak B \mathfrak B^\sigma$  in $L$ if $\bigl(\tfrac{2}{p}\bigr)=1$, and $\sigma \in \Gal(L/\Q)$ fixes $\sqrt{-6}$ and $\sqrt{2}\mapsto -\sqrt 2$. 
We then identify
\(f=f_{24.3.h.a}\).

\end{example}

\bigskip

\section{Proofs}\label{sec:proof}

We begin with Fourier expansions of the CM forms in \zcref{tab:main} and then prove the corresponding $L$-value formulas.  The remaining cases in \zcref{append:all} follow by similar arguments.  

\bigskip
Let
\begin{equation}
    \tau_D=\begin{cases}
    \sqrt{-D/4}, &\text{for } D= 8,12, 16, \\
        \frac{1+\sqrt{-D}}{2}& \text{for } D= 7, 11, 19, 27, 43, 67, 163.
    \end{cases}
\end{equation} 
For each $D$, we express the CM cusp form of weight $3$ of level $D$ as 
$$
   f_D(q) = \frac 12\sum_{m,n\in \Z}(m+n\ol{\tau_D})^2q^{|m+n\tau_D|^2}.
$$
For $D=15$ and $D=24$,  we have two weight $3$ cusp forms that have CM by $\Q(\sqrt{-D})$ with level $D$  for each $D$. Namely,
$$
f_{24, \pm}(q) = \frac 12\sum_{m,n\in \Z}(m+n\sqrt{-6})^2q^{m^2+6n^2} \pm \frac 12\sum_{m,n\in \Z}(m\sqrt{2}+n\sqrt{-3})^2q^{2m^2+3n^2},  
$$
$$
f_{15, \pm}(q)= \frac 12 \sum_{m,n\in \Z}\left(m+n\frac{1-\sqrt{-15}}2\right)^2q^{\left|m+n\frac{1+\sqrt{-15}}2\right|^2}\pm \frac12\sum_{i=1}^2\sum_{m,n\in \Z}\alpha_i(m,n)^2q^{|\alpha_i(m,n)|^2},  
$$
where 
$$
\alpha_1(m,n)=m\sqrt 5 +n\sqrt{-3}, \quad  \alpha_2(m,n)=\frac{(2m+1)(\sqrt 5 -\sqrt{-3})}2-n\sqrt{-3}.
$$

The twisted cusp forms
$f_D\otimes \chi_{-4}$, for $D= 8, 12,7$, yield the following explicit $q$-expansions respectively. 
\begin{align*}
    f_{32}(q)\coloneqq f_{8}\otimes \chi_{-4}(q)=& \frac 12\sum_{m,n\in \Z}(m+n\sqrt{-2})^2q^{m^2+2n^2} -\frac 12\sum_{m,n\in \Z}(2m+n\sqrt{-2})^2q^{4m^2+2n^2}\\
     &-\sum_{m,n\in \Z}(2m+1+(2n+1)\sqrt{-2})^2q^{(2m+1)^2+2(2n+1)^2},\\
    f_{48}(q)\coloneqq f_{12}\otimes \chi_{-4}(q)=&\frac 12\sum_{m,n\in \Z}(m+n\sqrt{-3})^2q^{m^2+3n^2} -\sum_{m,n\in \Z}(2m+n\sqrt{-3})^2q^{4m^2+3n^2}\\
       &+\sum_{m,n\in \Z}(2m+2n\sqrt{-3})^2q^{4m^2+12n^2},\\
    f_{112}(q)\coloneqq f_{7}\otimes \chi_{-4}(q)=&\frac 12\sum_{m,n\in \Z}(m+2n\sqrt{-7})^2q^{m^2+28n^2} -\frac12\sum_{m,n\in \Z}(2m+n\sqrt{-7})^2q^{4m^2+7n^2}.
\end{align*}
Similarly, the twists $f_{16}\otimes \chi_{-8}$  and $ f_{16}\otimes \chi_{-3}$ give the following $q$-expansions for the modular forms $f_{64}$ of level $64$ and $f_{144}$ of level $144$, respectively:

$$
  f_{64}(q)= -\frac 12\sum_{m,n\in \Z}(m+2ni)^2q^{m^2+4n^2} + \sum_{m,n\in \Z}(2m+1-4ni)^2q^{(2m+1)^2+16n^2},
$$
and 
\begin{align*}
    f_{144}(q)=&-\frac12\sum_{m,n\in \Z}(2m+1+2ni)^2q^{|2m+1+2ni|^2}+\sum_{m,n\in \Z}(2m+1+6ni)^2q^{|2m+1+6ni|^2}\\
     &+\sum_{m,n\in \Z}(6m+3+2ni)^2q^{|6m+3+2ni|^2} -\frac 32\sum_{m,n\in \Z}(6m+3+3ni)^2q^{|6m+3+3ni|^2}.
\end{align*}

\subsection{The \texorpdfstring{$L$}{L}-values}

We include the following lemma for the sake of completeness, even though it may already be known.

\begin{lemma} \label{lemma:G2vaule}
 Set $\tau_0 \in \H$. If $f$ is a cusp form having Fourier expansion 
    $$
   f(q) = \frac 12\sum_{m,n \in \Z}(m+n\ol{\tau_0})^2q^{|m+n\tau_0|^2}.
$$
Then 
$$
  L(f,2)= G_2^\ast(\tau_0).
$$
\end{lemma}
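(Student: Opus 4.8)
The plan is to recognize $L(f,2)$ as a (conditionally convergent) two–dimensional lattice sum and to identify that sum with the analytic continuation to $s=0$ of the non-holomorphic Eisenstein series $G_2(s,\tau_0)$, i.e.\ to run Hecke's trick. First I would record the Fourier coefficients: writing $\omega=m+n\tau_0$ for $(m,n)\in\Z^2$, so that $m+n\overline{\tau_0}=\overline\omega$ and $|m+n\tau_0|^2=\omega\overline\omega$, the coefficient of $q^N$ in $f$ is $\tfrac12\sum_{|\omega|^2=N}\overline\omega^{\,2}$. Hence, in a half-plane where the series converges absolutely and may be rearranged,
\[
 L(f,s)=\sum_{N}\frac{a_N}{N^s}=\frac12\sum_{(m,n)\in\Z^2\setminus\{0\}}\frac{\overline\omega^{\,2}}{|\omega|^{2s}}.
\]
Substituting $\overline\omega=|\omega|^2/\omega$ turns the summand into $\overline\omega^{\,2}/|\omega|^{2s}=1/\bigl(\omega^2|\omega|^{2(s-2)}\bigr)$, so that
\[
 L(f,s)=\frac12\sum_{(m,n)\neq(0,0)}\frac{1}{\omega^2\,|\omega|^{2(s-2)}}.
\]

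Next I would compare this with the defining sum of $G_2(s',\tau_0)$. Because the lattice $\{m+n\tau_0\}$ coincides with $\{m\tau_0+n\}$ after swapping the roles of $m$ and $n$, a relabelling of indices yields, on the region $\Re(s)>2$ where everything converges absolutely (the condition $\Re(2s'+2)>2$ for $G_2(s',\tau_0)$ with $s'=s-2$ is exactly $\Re(s)>2$),
\[
 L(f,s)=\Im(\tau_0)^{\,2-s}\,G_2(s-2,\tau_0).
\]
I would then pass to $s=2$ by analytic continuation. The left-hand side $L(f,s)$, defined through the Mellin transform, is entire since $f$ is a cusp form; the right-hand side is the product of the entire factor $\Im(\tau_0)^{2-s}$ with $G_2(s-2,\tau_0)$, which by the proposition on $G_2^\ast$ continues analytically to $s-2=0$. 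The two analytic functions agree on the open half-plane $\Re(s)>2$, hence agree at $s=2$ by the identity theorem, giving $L(f,2)=\Im(\tau_0)^{0}\,G_2(0,\tau_0)=G_2^\ast(\tau_0)$.

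The main obstacle is precisely the convergence issue: for a weight-three form the Dirichlet series $L(f,s)$ (equivalently, the lattice sum above) sits exactly on the boundary of absolute convergence at $s=2$, so one cannot evaluate it term by term there, and the naive interchange that formally produces $\tfrac12\sum'\omega^{-2}$ is not legitimate at that point. The whole content of the argument is therefore to establish the identity strictly inside $\Re(s)>2$ and then transport it to $s=2$ using the analytic continuation of $G_2(s,\tau)$ to $s=0$ furnished earlier; the non-holomorphic completion $G_2^\ast$ (with its correction term $-\pi/2\Im(\tau)$) is exactly the object that carries the value across the boundary. A secondary point to check carefully is the bookkeeping of the $\Im(\tau_0)$-power and the $m\leftrightarrow n$ relabelling, so that the weight-two sum $(m\tau_0+n)^{-2}$ in the definition of $G_2$ matches the $\omega^{-2}$ produced from the squared coefficients.
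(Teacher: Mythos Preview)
Your proof is correct and follows essentially the same approach as the paper: write $L(f,s)$ as the lattice sum $\tfrac12\sum'(m+n\overline{\tau_0})^2/|m+n\tau_0|^{2s}$, use $\overline\omega^2=|\omega|^4/\omega^2$ to rewrite it as $\tfrac12\sum'\omega^{-2}|\omega|^{-2(s-2)}$, and then pass to $s=2$ via Hecke's trick. In fact you are more careful than the paper, which simply writes ``taking $s\to 2$'' without spelling out the analytic continuation argument or the $\Im(\tau_0)^{2-s}$ bookkeeping.
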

\begin{proof}
By definition
\begin{align*}
    L(f,s) &=\frac{1}{2}\sum_{m,n\in \Z}' \frac{ (m+n\ol{\tau_0})^2 }{|m+n\tau_0|^{2s}}=\frac{1}{2}\sum_{m,n\in \Z}' \frac{1}{(m+n\tau_0)^{2}|m+n\tau_0|^{2s-4} },
\end{align*}
where $'$ denotes excluding the pair $(0,0)$ from the sum.
Taking $s \rightarrow 2$, we have 
$L(f,2)=G_2^*(\tau_0)$, 
which completes the proof. 
\end{proof}

Before we proceed to prove our main results, we use the following known results (see \cite{EHMM2, LLT, Osburn-Straub} for example) to illustrate our ideas through moduli interpretation, Ramanujan alternative bases, and the hypergeometric transformations. 
\begin{theorem}
For $D=12$, we have 
    \begin{align*}
          L(f_{12},2) =& \frac{\pi^2}{12}\pFq32{\frac 12&\frac 14&\frac 34}{&1&1}{-\frac{16}9}= \frac{\pi^2}{18}\pFq32{\frac 12&\frac 13&\frac 23}{&1&1}1=\frac{\pi^2}{6\sqrt 5}\pFq32{\frac 12&\frac 16&\frac 56}{&1&1}{\frac4{125}};
    \end{align*}
for $D=8$, 
    \begin{align*}
           L(f_{8},2) =& \frac{\pi^2}{24}\pFq32{\frac 12&\frac 14&\frac 34}{&1&1}{1}= \frac{3\pi^2}{56}\pFq32{\frac 12&\frac 14&\frac 34}{&1&1}{\frac 1{2401}} \\
           =&  \frac{\pi^2}{12\sqrt 2}\pFq32{\frac 12&\frac 12&\frac 12}{&1&1}{-1} =  \frac{\pi^2}{6\sqrt{10}}\pFq32{\frac 12&\frac 16&\frac 56}{&1&1}{\frac{27}{125}}. 
    \end{align*}
\end{theorem}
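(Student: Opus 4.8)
The plan is to push everything through the identity $L(f_D,2)=G_2^\ast(\tau_D)$ from \zcref{lemma:G2vaule}, with $\tau_{12}=i\sqrt3$ and $\tau_8=i\sqrt2$, and to recognize each right-hand ${}_3F_2$ as the value at a CM point of one of the weight-$2$ objects catalogued in \zcref{tab:haupymodul}. For a given displayed argument $t$ I would first locate the CM point $\tau$ with $t_d(\tau)=t$ (these are precisely the entries of \zcref{tab:main-a}), and then use \zcref{tab:haupymodul} to rewrite
\[
\pFq32{\frac12&\frac1d&\frac{d-1}{d}}{&1&1}{t}
\]
as $\theta_4(2\tau)^4$, $\frac{3}{\pi^2}G_{2,3}(\tau)$, $\frac{6}{\pi^2}G_{2,2}(\tau)$, or $E_4(\tau)^{1/2}$ according as $d=2,3,4,6$. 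The whole statement then reduces to showing that each of these Eisenstein values collapses to the claimed rational multiple of $G_2^\ast(\tau_D)$.

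For the $d=2,3,4$ lines this collapse is a finite manipulation with the quasimodular series. I would expand $G_{2,N}(\tau)=NG_2^\ast(N\tau)-G_2^\ast(\tau)$ and use that $G_2^\ast=\frac{\pi^2}{6}E_2^\ast$ is a genuine weight-$2$ (almost holomorphic) modular form for $\SL_2(\Z)$: periodicity $G_2^\ast(\tau+1)=G_2^\ast(\tau)$, the $S$-relation $G_2^\ast(-1/\tau)=\tau^2G_2^\ast(\tau)$, and the Fricke identities of \zcref{prop:G2astfricke} and \zcref{prop:g2ast13fricke}. Two special evaluations carry most of the weight: at the order-$3$ elliptic point $\rho$ the weight-$2$ automorphy factor acts by a nontrivial cube root of unity, whence $G_2^\ast(\rho)=0$, and the $S$-relation gives $G_2^\ast(i/\sqrt N)=-N\,G_2^\ast(i\sqrt N)$ at the Fricke points. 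For $D=12$ this already settles two of the three equalities: since $(1+\sqrt3 i)/2\sim\rho$, one gets $G_{2,2}((1+\sqrt3 i)/2)=2G_2^\ast(i\sqrt3)-G_2^\ast(\rho)=2G_2^\ast(i\sqrt3)$, producing the constant $\frac{\pi^2}{12}$, while $G_{2,3}(i/\sqrt3)=3G_2^\ast(i\sqrt3)-G_2^\ast(i/\sqrt3)=6G_2^\ast(i\sqrt3)$ produces $\frac{\pi^2}{18}$; the $D=8$, $d=4$ line is identical, with $G_{2,2}(i/\sqrt2)=4G_2^\ast(i\sqrt2)$ giving $\frac{\pi^2}{24}$. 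When $N\tau_D$ lands on a CM point of a non-maximal order (as $2\tau_8=2i\sqrt2$ does in the $d=2$ line for $D=8$), I would either invoke the level-$N$ modular equation together with the theta relations expressing $\theta_4(2\tau)^4$ through the $G_{2,N}$, or, more cleanly for that line, use a period relation $L(f_8,2)=\frac{\sqrt2}{3}L(f_{32},2)$ between $f_8$ and its quadratic twist $f_{32}=f_8\otimes\chi_{-4}$ and quote \zcref{thm:modularity} for $f_{32}$ at $(d,t)=(2,-1)$.

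The delicate step, and the one I expect to be the main obstacle, is the $d=6$ line, where the argument is exactly $1728/j(\tau_D)$ and the series equals $E_4(\tau_D)^{1/2}$. Unlike $\theta_4^4,G_{2,2},G_{2,3}$, the object $E_4^{1/2}$ is not a linear combination of holomorphic weight-$2$ Eisenstein series, so there is no formal reduction; one must instead establish a relation $E_4(\tau_D)^{1/2}=c\,G_2^\ast(\tau_D)$ with $c$ the stated algebraic number ($\frac{6\sqrt5}{\pi^2}$ for $D=12$, $\frac{6\sqrt{10}}{\pi^2}$ for $D=8$). I would deliberately avoid reading $c$ off the transformations of \zcref{prop:hgtransformations}: at these CM points the hypergeometric arguments sit on the singular value $t=1$ or outside the unit disc (e.g. $-\frac{16}{9}$ or $-1$), so continuing a transformation to them crosses singular points of the hypergeometric equation and picks up connection coefficients, and the naive prefactor comes out wrong (one checks it disagrees with \zcref{tab:main-a} by a small rational factor). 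The honest route is the quasi-period (Chowla--Selberg) relation at the CM point: both $E_4(\tau_D)^{1/2}$ and $G_2^\ast(\tau_D)$ are algebraic multiples of $\Omega_{-D}^2$, and $c$ is pinned down from the known CM evaluations of $E_4,E_6$ and the Weierstrass quasi-period --- equivalently, for the forms already appearing in \zcref{tab:main-a}, directly from \zcref{thm:modularity}. The remaining coincidences between two arguments with the same $d$ (such as $t=1$ and $t=\frac{1}{2401}$ on the $D=8$ line) I would derive from the modular equation relating $t_d(\tau)$ at $\tau$ and $3\tau$, equivalently from the degree-raising transformations \zcref{eq:trans:quartic} and \zcref{baileycubic2f1}.
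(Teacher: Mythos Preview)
Your skeleton for the $d=3,4$ lines is exactly the paper's: reduce $L(f_D,2)$ to $G_2^\ast(\tau_D)$ via \zcref{lemma:G2vaule}, write the ${}_3F_2$ as $\tfrac{6}{\pi^2}G_{2,2}$ or $\tfrac{3}{\pi^2}G_{2,3}$ at the CM point from \zcref{tab:haupymodul}, and collapse using $G_2^\ast(\rho)=0$ and $G_2^\ast(-1/\tau)=\tau^2G_2^\ast(\tau)$. The $t=\tfrac{1}{2401}$ coincidence is also handled as you suggest, via a level-$6$ modular relation (the paper quotes \zcref{prop: gamma06} to get $G_{2,2}(\tau_D/2)=\tfrac{9}{7}G_{2,2}(3\tau_D/2)$).

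Where you diverge from the paper is the $d=6$ line, and here you have dismissed the tool the paper actually uses. You are right that plugging $z=\tfrac12$ (resp.\ $z=-1$) into \zcref{prop:hgtransformations} naively gives the wrong prefactor by a rational factor, and you are right about why: the relevant CM point $-1/\tau_D$ sits in the $\Gamma_0(3)^+$ (resp.\ $\Gamma_0(2)^+$) fundamental domain but \emph{not} in the $\SL_2(\Z)$ one. But the paper does not abandon \zcref{prop:hgtransformations}; it reads that identity as an equality of modular forms and corrects the domain mismatch with one extra line, the weight-$4$ transformation of $E_4$:
\[
E_4\!\left(-1/\tau\right)=\tau^4E_4(\tau)\ \Longrightarrow\ E_4^{1/2}\!\left(-1/\tau_D\right)=|\tau_D|^2\,E_4^{1/2}(\tau_D).
\]
Thus $E_4^{1/2}(i/\sqrt3)=3\,E_4^{1/2}(i\sqrt3)=3\cdot{}_3F_2\bigl[\tfrac12,\tfrac16,\tfrac56;1,1;\tfrac{4}{125}\bigr]$, and equating this to the $\sqrt5\cdot{}_3F_2[\ldots;1]$ coming from \zcref{prop:hgtransformations} gives exactly the factor $3$ you flagged as missing; the $D=8$ case works the same way with $|\tau_D|^2=2$. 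So the ``connection coefficient'' you anticipated is nothing more than the automorphy factor of $E_4^{1/2}$, computable without any Chowla--Selberg input. Your proposed route through Chowla--Selberg periods would also succeed, but it is heavier and you did not pin down the constants; and your fallback of quoting \zcref{thm:modularity} is circular, since that is precisely the statement being established here. Similarly, for the $d=2$, $t=-1$ line at $D=8$ the paper simply applies \zcref{prop:hgtransformations} directly (both arguments lie in $|z|\le 1$) rather than going through the twist $f_{32}$, which would in turn require an independent proof of the period relation you wrote down.
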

\begin{proof}
We first consider the case of $D=12$. Recall that  $G_2^\ast(\frac{1+\tau_D}2)=0$ and $t_4(\frac{1+\tau_D}2)=-16/9$.  
Thus, by \zcref{lemma:G2vaule} and \zcref{tab:haupymodul}, 
 \begin{align*}
L(f_{12},2)=& G_2^\ast(\tau_D) = G_2^\ast(\tau_D+1)=  G_2^\ast(\tau_D+1)-\frac12 G_2^\ast\left(\frac{1+\tau_D}2\right) =\frac12G_{2,2}\left(\frac{1+\tau_D}2\right)  \\
=&\frac{\pi^2}{12}\pFq32{\frac 12&\frac 14&\frac 34}{&1&1}{t_4\left(\frac{1+\tau_D}2\right)}
=\frac{\pi^2}{12}\pFq32{\frac 12&\frac 14&\frac 34}{&1&1}{-\frac{16}9}.
    \end{align*}
Noting $-1/\tau_D=\tau_D/3$ and $t_3(-1/\tau_D)=1$, 
we then obtain 
\begin{align*}
2L(f_{12},2)=&2G_2^\ast(\tau_D)
=G_2^\ast(\tau_D)-\frac13 G_2^\ast(\tau_D/3) +\frac13 G_2^\ast(-1/\tau_D) +G_2^\ast(\tau_D)\\
=&\frac13G_{2,3} (-1/\tau_D) -G_2^\ast(\tau_D)+G_2^\ast(\tau_D)= \frac{\pi^2}{9}\pFq32{\frac 12&\frac 13&\frac 23}{&1&1}1.
    \end{align*}

Since the point $\frac{-1}{\tau_D}$ lives in the desired fundamental domains for $\G_0^+(3)$ but not in $\G_0(1)$ (\zcref{fig:fundamentaldomains}), to derive the third identity,  we apply the inversion $\tau\mapsto -1/\tau$ on the modular form $E_4(\tau)$ as follows. 
\begin{align}\label{eqn:E4trans}
\pFq{3}{2}{\frac{1}{2}&\frac{1}{6}&\frac{5}{6}}{&1&1}{\frac{1728}{j(-1/\tau)}}^2 = \frac1{\tau^4}\cdot \pFq{3}{2}{\frac{1}{2}&\frac{1}{6}&\frac{5}{6}}{&1&1}{\frac{1728}{j(\tau)}}^2. 
\end{align}
Thus, we have 
$$
3\cdot\pFq32{\frac 12&\frac 16&\frac{5}{6}}{&1&1}{\frac{4}{125}}=  \sqrt 5\pFq{3}{2}{\frac{1}{2}&\frac{1}{3}&
\frac{2}{3}}{&1&1}{1} 
$$ 
after applying continuation to  \zcref{prop:hgtransformations} with $z=1/2$ \footnote{Similarly, we can obtain that 
$$
\frac 32\cdot \pFq32{\frac 12&\frac 14&\frac34}{&1&1}{-\frac{16}9}=  \pFq{3}{2}{\frac{1}{2}&\frac{1}{3}&\frac{2}{3}}{&1&1}{1}.
$$}.

\medskip
For the case $D=8$, the result
$$
 L(f_{8},2) = \frac 14G_{2,2}(\tau_D/2)=\frac{\pi^2}{24}\pFq32{\frac 12&\frac 14&\frac 34}{&1&1}{1}
$$
can be obtained similarly by using $-1/\tau_D=\tau_D/2$ and  $t_4(-1/\tau_D)=1$. 
Applying the identities in \zcref{prop: gamma06} and \zcref{prop:hgtransformations}, we have that 
\begin{align*}
   G_{2,2}(\tau_D/2)=\frac 97G_{2,2}(3\tau_D/2)=\frac{3\pi^2}{56}\pFq32{\frac 12&\frac 14&\frac 34}{&1&1}{\frac1{2401}}
\end{align*}
and 
$$
\pFq{3}{2}{\frac{1}{2}&\frac{1}{2}&
\frac{1}{2}}{&1&1}{-1} =\frac1{\sqrt 2}\pFq{3}{2}{\frac{1}{2}&\frac{1}{4}&\frac{3}{4}}{&1&1}{1}. 
$$

To derive the final identity, we notice that $-1/\tau_D$ is not in the given fundamental domain of $\G_0(1)$. Combining with $z=-1$ in \zcref{prop:hgtransformations} and \eqref{eqn:E4trans}, we deduce that 
$$
\pFq{3}{2}{\frac{1}{2}&\frac{1}{6}&\frac{5}{6}}{&1&1}{\frac{1728}{j(-1/\tau_D)}}^2 =  \frac 58\pFq{3}{2}{\frac{1}{2}&\frac{1}{4}&\frac{3}{4}}{&1&1}{1}^2.
$$
Since the hypergeometric evaluation is positive, we get the desired identity.  
\end{proof}

For the case of $D=7$ and $D=16$, the identities can be derived in the same manner and also can be viewed as  corollaries of the known results in \cite[et al.]{LLT, Ono-book}:
\begin{align*}
     L(f_{7},2) =\frac{\pi^2}{8\sqrt 7}\pFq32{\frac 12&\frac 12&\frac 12}{&1&1}{\frac 1{64}}, \quad  L(f_{16},2) = \frac{\pi^2}{16}\pFq32{\frac 12&\frac 12&\frac 12}{&1&1}{1}. 
\end{align*}
\begin{corollary}\label{cor:716}
    We have  
    \begin{align*}
           L(f_{7},2) =& \frac{\pi^2}{21}\pFq32{\frac 12&\frac 14&\frac 34}{&1&1}{-\frac{256}{3969}}  = \frac{\pi^2}{8\sqrt 7}\pFq32{\frac 12&\frac 12&\frac 12}{&1&1}{\frac 1{64}} \\
           =&  \frac{\pi^2}{2\sqrt{105}}\pFq32{\frac 12&\frac 16&\frac 56}{&1&1}{-\frac{64}{125}}= \frac{2\pi^2}{\sqrt{1785}} \cdot \pFq32{\frac 12&\frac16&\frac56}{&1&1}{\frac{64}{614125}}.  \\
   L(f_{16},2) =& \frac{\pi^2}{12}\pFq32{\frac 12&\frac 14&\frac 34}{&1&1}{\frac{32}{81}} =  \frac{\pi^2}{16}\pFq32{\frac 12&\frac 12&\frac 12}{&1&1}{1}  =  \frac{\pi^2}{8}\pFq32{\frac 12&\frac 12&\frac 12}{&1&1}{-8}  \\
   =& \frac{\pi^2}{8\sqrt{3}}\pFq32{\frac 12&\frac 16&\frac 56}{&1&1}{1} = \frac{\pi^2}{2\sqrt{33}}\pFq32{\frac 12&\frac 16&\frac 56}{&1&1}{\frac 8{1331}}. 
    \end{align*}
\end{corollary}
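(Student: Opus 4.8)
The plan is to take the two baseline evaluations
$L(f_{7},2)=\frac{\pi^2}{8\sqrt7}\,\pFq32{\frac12&\frac12&\frac12}{&1&1}{\frac1{64}}$
and
$L(f_{16},2)=\frac{\pi^2}{16}\,\pFq32{\frac12&\frac12&\frac12}{&1&1}{1}$
from \cite{LLT, Ono-book} as the anchors, and to generate every remaining member by feeding a single well-chosen value of $z$ into \zcref{prop:hgtransformations}, keeping careful track of the algebraic prefactors $(1-4z)^{1/2}$, $(1-z)^{-1/2}$, and $\left(\tfrac{1-4z}{1-z}\right)^{1/2}$. For $f_{7}$ the natural choice is $z=\tfrac1{64}$ in the first identity of \zcref{prop:hgtransformations}: the $\{\tfrac16,\tfrac56\}$-argument becomes $\tfrac{-27z}{(1-4z)^3}=-\tfrac{64}{125}$ with prefactor $(1-4z)^{1/2}=\tfrac{\sqrt{15}}4$, turning the baseline into the $-\tfrac{64}{125}$ evaluation with constant $\tfrac{\pi^2}{2\sqrt{105}}$; the $\{\tfrac14,\tfrac34\}$-member gives argument $\tfrac{-4z}{(z-1)^2}=-\tfrac{256}{3969}$ with prefactor $(1-z)^{-1/2}=\tfrac{8}{3\sqrt7}$, yielding the $-\tfrac{256}{3969}$ evaluation with constant $\tfrac{\pi^2}{21}$. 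Both new arguments lie inside the unit disk, so this step is unconditional.

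For $f_{16}$ I would first relate the two $\{\tfrac12,\tfrac12,\tfrac12\}$-arguments $1$ and $-8$. Applying the Pfaff transformation \eqref{pfaff} to $\pFq21{\frac12&\frac12}{&1}{z}$ at $z=\tfrac12$ sends $z\mapsto\tfrac{z}{z-1}=-1$, and squaring via the Clausen formula \eqref{eq:clausen} (with $t=4z(1-z)$) gives $\pFq32{\frac12&\frac12&\frac12}{&1&1}{1}=2\,\pFq32{\frac12&\frac12&\frac12}{&1&1}{-8}$, hence the constant $\tfrac{\pi^2}8$. The $\{\tfrac14,\tfrac34\}$- and $\{\tfrac16,\tfrac56\}$-evaluations at $\tfrac{32}{81}$ and $1$ then come from \zcref{prop:hgtransformations} at $z=-\tfrac18$, where $\tfrac{-4z}{(z-1)^2}=\tfrac{32}{81}$ and $\tfrac{-27z}{(1-4z)^3}=1$, with prefactors $(1-4z)^{1/2}=\tfrac{\sqrt6}2$ and $\left(\tfrac{1-4z}{1-z}\right)^{1/2}=\tfrac{2}{\sqrt3}$; since the $z=-\tfrac18$ branch directly involves $\pFq32{\frac12&\frac12&\frac12}{&1&1}{-\frac18}$, which is the $f_{64}$ value, I would close the loop using the twist relation $L(f_{64},2)=\sqrt2\,L(f_{16},2)$ recorded in \zcref{tab:main-a}. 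A short check then reproduces the constants $\tfrac{\pi^2}{12}$ and $\tfrac{\pi^2}{8\sqrt3}$.

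The genuinely new input is required for the last evaluation in each block, at $\tfrac8{1331}$ for $f_{16}$ and at $\tfrac{64}{614125}$ for $f_{7}$. These are CM values of $E_4^{1/2}=\pFq32{\frac12&\frac16&\frac56}{&1&1}{\frac{1728}{j}}$ from \zcref{tab:haupymodul}, since $\tfrac{1728}{j(2i)}=\tfrac8{1331}$ (with $j(2i)=66^3$) and $\tfrac{1728}{j(\sqrt{-7})}=\tfrac{64}{614125}$ (with $j(\sqrt{-7})=255^3$). For these I would follow the modular method of the preceding $D=12,8$ theorem: write $L(f_{16},2)=G_2^\ast(2i)$ and $L(f_{7},2)=G_2^\ast\!\left(\tfrac{1+\sqrt{-7}}2\right)$ via \zcref{lemma:G2vaule}, and then connect $G_2^\ast$ at the CM point to $E_4^{1/2}$ using the Eisenstein dictionary (\zcref{prop: g22halfintegershift}, \zcref{prop:G2astfricke}, \zcref{prop:g2ast13fricke}) together with the inversion identity \eqref{eqn:E4trans}, reading off the algebraic constants $\tfrac1{2\sqrt{33}}$ and $\tfrac2{\sqrt{1785}}$.

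I expect the main obstacle to be precisely the passage to arguments on or outside the unit circle. The transformations in \zcref{prop:hgtransformations} are only asserted for arguments of absolute value $<1$, so substitutions landing on $z=1$ (where $(1-4z)^{1/2}$ turns imaginary) or producing $|z|>1$ cannot be used verbatim; they must be routed either through one-sided boundary limits combined with the twist relations among $f_{16}$ and $f_{64}$ (to keep every argument inside the disk), or through the analytic-continuation/inversion mechanism \eqref{eqn:E4trans} anchored at the CM points. The hardest single case is $\tfrac{64}{614125}$: it compares the CM point $\sqrt{-7}$ of the non-maximal order of discriminant $-28$ ($j=255^3$) with the maximal order of discriminant $-7$ ($j=-15^3$), so it needs a true modular relation between the two points — an isogeny or Atkin--Lehner identity, or the relevant modular equation — rather than any single ${}_3F_2$ transformation, which is exactly the $D=28$ subtlety singled out in the statement.
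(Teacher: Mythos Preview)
Your proposal is correct and follows the paper's strategy: the paper omits the argument entirely, remarking only that the identities ``can be derived in the same manner'' as the $D=12,8$ theorem and ``can also be viewed as corollaries'' of the baseline evaluations together with \zcref{prop:hgtransformations}. Your use of $z=\tfrac1{64}$ for $f_7$, of Pfaff\,+\,Clausen for the pair $\{1,-8\}$, and of the modular/inversion mechanism for the $\tfrac8{1331}$ and $\tfrac{64}{614125}$ values is exactly what that sentence is gesturing at.

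One small simplification: your detour through $z=-\tfrac18$ and the twist identity $L(f_{64},2)=\sqrt2\,L(f_{16},2)$ works, but it borrows from \zcref{thm: special Lvaule}, which is proved after this corollary. The route the paper has in mind (``the same manner'' as $D=12,8$) is more direct: from \zcref{lemma:G2vaule} one has $L(f_{16},2)=G_2^\ast(2i)$, and since $G_2^\ast(i)=0$ this equals $\tfrac12 G_{2,2}(i)$; reading the $d=4$ row of \zcref{tab:haupymodul} at $\tau=i$ (where $t_4(i)=\tfrac{32}{81}$) gives the $\tfrac{32}{81}$ evaluation with constant $\tfrac{\pi^2}{12}$ immediately, and the $\{1/6,5/6;1\}$ and $\tfrac8{1331}$ evaluations then follow from \zcref{prop:hgtransformations} at $z=-\tfrac18$ and from $E_4^{1/2}(2i)$ without any forward reference.
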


\medskip
Applying similar ideas, we can obtain the following $\C$-version of the modularity. 
\begin{theorem}
For $D=24$, we have 
    \begin{align*}
          L(f_{24,+},2) = \frac{\pi^2}{9}\pFq32{\frac 12&\frac 13&\frac 23}{&1&1}{\frac 12}, \quad  L(f_{24,-},2) =& \frac{\pi^2}{12}\pFq32{\frac 12&\frac 14&\frac 34}{&1&1}{\frac 19}; 
    \end{align*}
for $D=15$, 
    $$  L(f_{15,+},2) = \frac{4\pi^2}{45}\pFq32{\frac 12&\frac 13&\frac 23}{&1&1}{\frac 4{125}}.$$
\end{theorem}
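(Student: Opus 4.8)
The plan is to reduce every $L$-value to values of the almost-holomorphic weight-two form $G_2^*$ at CM points of the relevant imaginary quadratic field, exactly as in the cases $D=8,12$, and then to consolidate these into a single $G_{2,3}$ or $G_{2,2}$ evaluated at the CM point $\tau_d$ of \zcref{tab:main-a}, whose value as a ${}_3F_2$ is read off from \zcref{tab:haupymodul}. First I would apply \zcref{lemma:G2vaule} to each theta-summand separately. A summand running over a lattice $\lambda(\Z+\tau_0\Z)$ with $\tau_0\in\H$ is, after the substitution $n\mapsto -n$, of the shape treated in \zcref{lemma:G2vaule} up to the homothety $\lambda$, and therefore contributes $\tfrac{\lambda^2}{|\lambda|^4}\,G_2^*(-\overline{\tau_0})$ to the $L$-value at $s=2$. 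Carrying this out gives
\[
L(f_{24,\pm},2)=G_2^*\!\left(\sqrt{-6}\right)\pm\tfrac12\,G_2^*\!\left(\tfrac{\sqrt{-6}}{2}\right),
\qquad
L(f_{15,+},2)=G_2^*(\omega)+\tfrac{1-\sqrt{-15}}{8}\,G_2^*(\mu),
\]
where $\omega=\tfrac{1+\sqrt{-15}}{2}$ and $\mu=\tfrac{3+\sqrt{-15}}{4}$; here the two sublattices defining the $-$-part of $f_{15,+}$ combine into the single index-two superlattice $\tfrac{\sqrt5-\sqrt{-3}}{2}\Z+\sqrt{-3}\Z$, whose CM point is $\mu$ and whose homothety factor $\beta=\tfrac{\sqrt5-\sqrt{-3}}{2}$ produces $\tfrac{\beta^2}{|\beta|^4}=\tfrac{1-\sqrt{-15}}{8}$.

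For $D=24$ the consolidation is clean because all four points lie in the maximal order of $\Q(\sqrt{-6})$. Using the weight-two modularity $G_2^*(-1/\tau)=\tau^2G_2^*(\tau)$ together with the identity $\tfrac{\sqrt{-6}}{3}=-1/\tfrac{\sqrt{-6}}{2}$ yields $G_2^*(\tfrac{\sqrt{-6}}{3})=-\tfrac32G_2^*(\tfrac{\sqrt{-6}}{2})$. Combined with $G_{2,3}(\tau)=3G_2^*(3\tau)-G_2^*(\tau)$ this gives $L(f_{24,+},2)=\tfrac13G_{2,3}(\tfrac{\sqrt{-6}}{3})$, and \zcref{tab:haupymodul} with $t_3(\tfrac{\sqrt{-6}}{3})=\tfrac12$ produces the stated ${}_3F_2$. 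Symmetrically, $G_{2,2}(\tfrac{\sqrt{-6}}{2})=2G_2^*(\sqrt{-6})-G_2^*(\tfrac{\sqrt{-6}}{2})$ identifies $L(f_{24,-},2)=\tfrac12G_{2,2}(\tfrac{\sqrt{-6}}{2})$, and \zcref{tab:haupymodul} with $t_4(\tfrac{\sqrt{-6}}{2})=\tfrac19$ finishes that case.

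For $D=15$ the same scheme applies, but this is where the main obstacle appears. Reading \zcref{tab:haupymodul} backwards, the target equals $\tfrac{4}{15}G_{2,3}(\tfrac{\sqrt{-15}}{3})=\tfrac45G_2^*(\sqrt{-15})-\tfrac4{15}G_2^*(\tfrac{\sqrt{-15}}{3})$, which is built from the CM points $\sqrt{-15}$ and $\tfrac{\sqrt{-15}}{3}$ of the conductor-two order (discriminant $-60$), whereas the theta reduction above is built from the maximal-order points $\omega,\mu$ (discriminant $-15$). Unlike the $D=24$ situation, $\tfrac{\sqrt{-15}}{3}$ does not land on the maximal order, so the two families are not $\SL_2(\Z)$-equivalent and weight-two modularity alone cannot connect them; the required bridge between the maximal order and its conductor-two suborder is the heart of the argument.

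I would construct that bridge from the level-two Eisenstein relation $G_{2,2}(\omega)=2G_2^*(\sqrt{-15})-G_2^*(\omega)$ (together with its companion obtained at $\mu$, noting $2\mu\equiv\omega\bmod\Z$), which already re-expresses the discriminant $-60$ values through discriminant $-15$ values and the level-two Eisenstein series; the residual $G_{2,2}$-values are then pinned down by explicit CM evaluations of the relevant hauptmoduls and $\eta$-quotients in the style of \zcref{prop: gamma06}. Since by Chowla–Selberg every $G_2^*$-value occurring is an algebraic multiple of $\pi^2\Omega_{-15}^2$ for the common field $\Q(\sqrt{-15})$, the whole identity collapses to a single algebraic relation among these multipliers, which I would verify directly. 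It is precisely this bookkeeping across the two orders, rather than any single modular symmetry, that I expect to be the delicate step.
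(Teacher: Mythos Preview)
Your treatment of $D=24$ is essentially the paper's proof: both arrive at $L(f_{24,+},2)=\tfrac13 G_{2,3}(\sqrt{-6}/3)$ and $L(f_{24,-},2)=\tfrac12 G_{2,2}(\sqrt{-6}/2)$ via the same modularity identity $G_2^*(s/2)=-\tfrac23 G_2^*(s/3)$, and then read off the ${}_3F_2$ from \zcref{tab:haupymodul}.

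For $D=15$ your reduction $L(f_{15,+},2)=G_2^*(\omega)+\tfrac{1-\sqrt{-15}}{8}G_2^*(\mu)$ is correct, but the route you propose from there diverges from the paper and leaves the decisive step unexecuted. The paper does \emph{not} combine the two ideal-class pieces $\alpha_1,\alpha_2$ into a single lattice. Keeping them separate, the $\alpha_1$-piece contributes $-\tfrac13 G_2^*(s/3)$, while the $\alpha_2$-piece, being a sum over \emph{odd} multiples of $\beta$, is exactly $-\tfrac13\,\mathcal G^*_{2,(1;2)}\!\bigl(\tfrac{-3+s}{6}\bigr)$. The identity $\mathcal G^*_{2,(1;2)}(\tau)=G_2^*(\tau)-G_2^*(2\tau)$ from \zcref{prop:curlyg2asttoclassicales} then makes the $G_2^*(s/3)$-terms cancel, and since $3\cdot\tfrac{-3+s}{6}\equiv\omega\pmod\Z$ one lands immediately on
\[
L(f_{15,+},2)=\tfrac13\,G_{2,3}\!\bigl(\tfrac{-3+s}{6}\bigr).
\]
The bridge you worry about between the discriminant $-15$ point $\tfrac{-3+s}{6}$ and the discriminant $-60$ point $s/3$ is then supplied by a single explicit identity from \zcref{prop: gamma06}: $G_{2,3}(2\tau)/G_{2,3}(\tau)=u_6(\tau)^2$ with $u_6\!\bigl(\tfrac{3+s}{6}\bigr)=\sqrt5/2$, giving the factor $4/5$.

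So the paper resolves exactly the obstacle you flag, but by recognizing the odd-index coset as a congruence Eisenstein series rather than by merging lattices; this keeps everything inside the $G_{2,3}$-framework and replaces your unexecuted Chowla--Selberg verification with one explicit CM hauptmodul value. Your $G_{2,2}$-based plan could in principle be completed, but as written it stops at ``I would verify directly,'' which is the whole content of the identity.
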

\begin{proof}
Let $s=\sqrt{-6}$. Applying \zcref{lemma:G2vaule}, we get 
 \begin{align*}
L(f_{24,+},2)=& G_2^\ast(s) -\frac 13G_2^\ast\left(-\frac{\sqrt 2}{\sqrt{-3}}\right)= G_2^\ast(s) -\frac 13G_2^\ast(s/3) \\
=&  \frac 13G_{2,3}^\ast(s/3)=\frac{\pi^2}{9}\pFq32{\frac 12&\frac 13&\frac 23}{&1&1}{\frac 12}.
    \end{align*}
. 

Similarly, 
 \begin{align*}
L(f_{24,-},2)=& G_2^\ast(s) -\frac 12G_2^\ast\left(\frac{\sqrt{-3}}{\sqrt 2}\right)= \frac 12G_{2,2}(s/2)=  \frac 12G_{2,2}(-1/s)
= \frac{\pi^2}{12}\pFq32{\frac 12&\frac 14&\frac 34}{&1&1}{\frac 19}.
    \end{align*}
on. 

We now consider the $L$-value of $f_{15,+}$. Setting $s=\sqrt{-15}$, we have 
 \begin{align*}
L(f_{15,+},2)=& G_2^\ast\left(\frac{1+s}2\right) -\frac 13G_2^\ast\left(\frac s3\right)-\frac 13 \mathcal G_{(1,2)}^\ast\left(\frac{-3+s}6\right)\\
=& G_2^\ast\left(\frac{1+s}2\right) -\frac 13G_2^\ast\left(\frac s3\right)-\frac 13\left(G_2^\ast\left(\frac{-3+s}6\right)-G_2^\ast\left(\frac s3\right)\right)\\
=& \frac13G_{2,3}\left(\frac{-3+s}6\right)  = \frac 13 \cdot \frac 45 G_{2,3}\left(\frac s3\right)
    \end{align*}
by \zcref{prop: gamma06}. Hence, we conclude that
$$
  L(f_{15,+},2)
  = \frac 13 \cdot \frac 45 \cdot \frac{\pi^2}{3}  \pFq32{\frac 12&\frac 13&\frac 23}{&1&1}{\frac{4}{125}}.
$$
\end{proof}

\begin{remark}
Note that $$
 L(f_{24,-},2)= L(f_{24,+}\otimes \chi_8,2) = \sqrt 2L(f_{24,+},2),$$
 which can also be seen from \zcref{prop:hgtransformations}.
For the form $f_{15,-}$, one has 
  \begin{align*}
     L(f_{15,-},2)=& L(f_{15,+}\otimes \chi_5,2)= \frac{8\sqrt 5}{225}\pi^2\pFq32{\frac 12&\frac 13&\frac 23}{&1&1}{\frac 4{125}}\\
     =&\frac 15 G_{2,5}\left(\frac{5+\sqrt{-15}}{10}\right).  
      \end{align*}
\end{remark}

\medskip

In the end of this section, we relate the $L$-values listed in  \zcref{tab:main} with the CM-values of the targeted Eisenstein series, which will imply the results. 

\begin{theorem}\label{thm: special Lvaule} The $L$-values of the CM forms in our table are as follows. 
    \begin{align*}
&L(f_{27},2)=\frac13G_{2,3}(\zeta_3),\\
&L(f_{32},2)=\frac {\pi^2}8\theta_4\left(\sqrt{2}i\right)^4,\\ 
&L(f_{48},2)=\frac {\pi^2}8\theta_4\left(2\zeta_3\right)^4,\\ 
&L(f_{64},2)=\frac {\pi^2}8\theta_4\left(2i\right)^4,\\
&L(f_{112},2)=\frac 14G_{2,4}\left(\sqrt{7}i/2 \right)=\frac {\pi^2}8\theta_4\left(1+\sqrt{7}i\right)^4, \\ 
&L(f_{144},2)=-\frac23G_{2,2}\left(i\right)+G_{2,2}\left(3i\right)+\frac 19G_{2,2}\left(i/3\right)=\frac4{3\sqrt 3}G_{2,2}(i). 
\end{align*} 
\end{theorem}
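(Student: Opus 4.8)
The unifying device is \zcref{lemma:G2vaule}, which I would extend from a full rank-two lattice to arbitrary sublattices and residue cosets. In each Fourier expansion recorded above, the $q$-exponents are norms $|\omega|^{2}$ of lattice points $\omega=\omega_1 m+\omega_2 n$ and the coefficients are multiples of $\bar\omega^{2}$; as in the proof of \zcref{lemma:G2vaule}, the value at $s=2$ therefore collapses to a combination of Epstein-type sums $\sum'_{m,n}(\omega_1 m+\omega_2 n)^{-2}=\tfrac{2}{\omega_1^{2}}G_2^{\ast}(\omega_2/\omega_1)$ (valid when $\Im(\omega_2/\omega_1)>0$), with every shifted-coset sum resolved by inclusion--exclusion against the even/odd sublattices. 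This first step expresses each $L(f,2)$ as an explicit $\Q$-combination of values $G_2^{\ast}(c\,\tau_D)$ with $c\in\Q_{>0}$, where $\tau_D$ is the CM point attached to the form.

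To collapse these combinations into the stated closed forms I would use four structural facts about $G_2^{\ast}$: translation invariance $G_2^{\ast}(\tau+1)=G_2^{\ast}(\tau)$; the weight-two inversion $G_2^{\ast}(-1/\tau)=\tau^{2}G_2^{\ast}(\tau)$, in particular $G_2^{\ast}(i/n)=-n^{2}G_2^{\ast}(ni)$; the vanishing $G_2^{\ast}(i)=G_2^{\ast}(\zeta_3)=0$ at the order-two and order-three elliptic points, immediate from the inversion since the relevant automorphy factor is a nontrivial root of unity; and the definitions $G_{2,N}(\tau)=NG_2^{\ast}(N\tau)-G_2^{\ast}(\tau)$ together with the theta identities of \zcref{prop: g22halfintegershift}. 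The form $f_{27}$ is then immediate: in the notation of \zcref{example:e27}, \zcref{lemma:G2vaule} gives $L(f_{27},2)=G_2^{\ast}(3\zeta_3)$, and $G_2^{\ast}(\zeta_3)=0$ rewrites this as $G_2^{\ast}(3\zeta_3)-\tfrac13 G_2^{\ast}(\zeta_3)=\tfrac13 G_{2,3}(\zeta_3)$.

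For the four quadratic twists the inclusion--exclusion produces a combination of $G_2^{\ast}$ at $\tau_D/2,\tau_D,2\tau_D$; rewriting these through $G_{2,2}$ and $G_{2,4}$ and matching against the theta side of \zcref{prop: g22halfintegershift} finishes each case. Thus for $f_{112}$ one lands on $\tfrac14 G_{2,4}(\sqrt{-7}/2)$, and the shift $\theta_3(\tau+1)=\theta_4(\tau)$ with $2$-periodicity gives $\theta_3(\sqrt{-7})=\theta_4(1+\sqrt{-7})$, yielding the final form. The cases $f_{32},f_{48},f_{64}$ run along the same lines with base points $\sqrt{-2}$, $2\zeta_3$, $2i$ respectively, using $G_2^{\ast}(\zeta_3)=0$ for $f_{48}$ (so that $G_{2,2}(\zeta_3)=2G_2^{\ast}(\sqrt{-3})$) and $G_2^{\ast}(i)=0$ for $f_{64}$ (so that $G_{2,2}(i)=2G_2^{\ast}(2i)$).

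The only laborious case is $f_{144}$. Its four coset sums first reduce, after applying $G_2^{\ast}(i/n)=-n^{2}G_2^{\ast}(ni)$ and $G_2^{\ast}(i)=0$, to $-\tfrac23 G_{2,2}(i)+G_{2,2}(3i)+\tfrac19 G_{2,2}(i/3)$, which is the first claimed equality. For the second equality I would insert the special values $u_6(i)=\tfrac14(3^{3/4}\sqrt2-1+\sqrt3)$ and $u_6(i/3)=\tfrac14(3^{3/4}\sqrt2+1-\sqrt3)$ from \zcref{prop: gamma06} into the ratio formula $G_{2,2}(3\tau)/G_{2,2}(\tau)=(-2u_6^{2}-2u_6+1)/(6u_6^{2}-6u_6-3)$: evaluating at $\tau=i$ expresses $G_{2,2}(3i)$ as a scalar multiple of $G_{2,2}(i)$, while evaluating at $\tau=i/3$ expresses $G_{2,2}(i)$ in terms of $G_{2,2}(i/3)$, hence $G_{2,2}(i/3)$ in terms of $G_{2,2}(i)$; collecting the three terms should yield the scalar $\tfrac{4}{3\sqrt3}$. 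The main obstacle I anticipate is purely computational and concentrated here: keeping the sign and index factors in the coset inclusion--exclusion exactly right, and carrying out the surd arithmetic when $u_6(i)$ and $u_6(i/3)$ are substituted into the quadratic ratio. Verifying that the two ratios combine to $\tfrac{4}{3\sqrt3}$ is where an error is most likely to hide; everything else is a direct application of \zcref{lemma:G2vaule} and the identities already established.
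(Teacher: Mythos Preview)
Your proposal is correct and follows essentially the same route as the paper: both arguments extend \zcref{lemma:G2vaule} to the coset sums in the explicit Fourier expansions, collapse the resulting $G_2^{\ast}$-combinations using the vanishing at $i$ and $\zeta_3$ together with the theta identities of \zcref{prop: g22halfintegershift}, and handle $f_{144}$ by first reaching $-\tfrac23 G_{2,2}(i)+G_{2,2}(3i)+\tfrac19 G_{2,2}(i/3)$ and then invoking the $u_6$-ratio formula and special values from \zcref{prop: gamma06}. The only cosmetic difference is that the paper routes the $f_{144}$ reduction through the congruence Eisenstein series $\mathcal{G}^*_{2,(1;2)}$ and \zcref{prop:curlyg2asttoclassicales,prop:G2astfricke} rather than applying the inversion $G_2^{\ast}(i/n)=-n^{2}G_2^{\ast}(ni)$ directly, but this is the same manipulation in different notation.
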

In the proof, we will handle the first five cases, for which we can conclude the identity by straightforward calculation. In the case of $144$, we  apply the relations in \zcref{prop:curlyg2asttoclassicales, prop:G2astfricke} with the modularity of $G^*_2$.   
\begin{proof}
The value $L(f_{27},2)$ is obtained immediately from the facts that 
$$
  f_{27}(q)=\frac12 \sum_{m,n}(m+3n \ol\zeta_3)^2q^{|m+3n \zeta_3|^2} 
$$
and 
$G_2^\ast(\zeta_3)=0$.   From the $q$-expansion of $f_{64}$,  one has 
$$
    L(f_{64},2)=- G_2^\ast(2i)+2G_2^\ast(4i)-\frac12 G_2^\ast(2i)=2G_2^\ast(4i)-\frac32 G_2^\ast(2i)=\frac{\pi^2}{8}\theta_4^4(2i). 
$$
The last equality is due to the fact that $G_2^\ast(i)=0$ and  \zcref{prop: g22halfintegershift}. Likewise,
$$
L(f_{112},2)=G_2^\ast(2\sqrt{7}i)-\frac14G_2^\ast\left(\frac{\sqrt{7}i}2\right) = \frac14G_{2,4}\left(\frac{\sqrt{7}i}2 \right). 
$$
Then  \zcref{prop: g22halfintegershift} and  $\theta_3(2\tau)^4=\theta_4(2\tau+1)^4$
give the desired identity.  

We now consider the cases  $f_{48}$ and $f_{32}$. By straightforward computations, we observe that 
\begin{align*}
L(f_{32},2)=&-G_2^\ast(\sqrt{-2})+\frac14 G_2^\ast\left(\frac{\sqrt{-2}}2\right)+2G_2^\ast(2\sqrt{-2})-\frac12G_2^\ast(\sqrt{-2})\\ 
 =&2G_2^\ast(2\sqrt{-2})-\frac32G_2^\ast(\sqrt{-2})+\frac14 G_2^\ast\left(\frac{\sqrt{-2}}2\right)\\ 
 =&\frac14\left(4G_{2,2}(\sqrt{-2})-G_{2,2}\left(\frac{\sqrt{-2}}2\right)\right)=\frac{\pi^2}6\cdot \frac34 \theta_4(\sqrt{-2})^4,
\end{align*}
and 
$$
   L(f_{48},2)=\frac 12 G_2^\ast(\sqrt{-3})-\frac12 G_{2,2}\left(\frac{\sqrt{-3}}2\right).
$$
To proceed, we apply the fact that $G_2^\ast(\zeta_3)=0$ and the relation in \zcref{prop: g22halfintegershift}.   
This gives us 
\begin{align*}
L(f_{48},2)=&\frac 12  G_{2,2}\left(\frac{\sqrt{-3}}2\right)+\frac 14 G_{2,2}\left(\zeta_3\right)\\
  =& \frac 14\left(2G_{2,4}\left(\zeta_3\right)-4G_{2,2}\left(\zeta_3\right)+G_{2,2}\left(\zeta_3\right)\right)=\frac 14\left(2G_{2,4}\left(\zeta_3\right)-3G_{2,2}\left(\zeta_3\right)\right),
\end{align*}
and leads to the identity as stated.

To compute $ L(f_{144},2)$, we first observe that 
\begin{align*}
    L(f_{144},2)=&\frac{1}{4}\mathcal{G}^*_{2,(1;2)}\left(\frac{i}{2}\right)-\frac1{18}\mathcal{G}^*_{2,(1;2)}\left(\frac{i}{6}\right)-\frac12\mathcal{G}^*_{2,(1;2)}\left(\frac{3i}{2}\right)+\frac{1}{3}\mathcal{G}^*_{2,(1;2)}\left(i\right). 
\end{align*}
Using \zcref{prop:curlyg2asttoclassicales,  prop:G2astfricke},
\begin{align*}
    L(f_{144},2)=&-\frac14G_{2,2}\left(\frac{i}2\right)+\frac1{18}G_{2,2}\left(\frac{i}{6}\right)+\frac 12G_{2,2}\left(\frac{3i}{2}\right)-\frac 13G_{2,2}\left(i\right)\\
    &+\frac 14G_2^*\left(i\right)-\frac 1{18}G^*_2\left(\frac{i}{6}\right)-\frac 12G^*_2\left(3i\right)+\frac 13G^*_2\left(2i\right)\\
    =&-\frac12G_{2,2}\left(i\right)+G_{2,2}\left(3i\right)+\frac 19G_{2,2}\left(\frac{i}{3}\right)-\frac 13G_{2,2}\left(i\right)\\
    &+\frac 9{18}G^*_2\left(3i\right)-\frac 12G^*_2\left(3i\right)-\frac 16G_{2,2}\left(i\right)\\
    =&-\frac23G_{2,2}\left(i\right)+G_{2,2}\left(3i\right)+\frac 19G_{2,2}\left(\frac{i}{3}\right).
\end{align*}

Finally, we apply  \zcref{prop: gamma06} to connect the values $G_{2,2}\left(3i\right)$ and $G_{2,2}\left(i/3\right)$ with $G_{2,2}\left(i\right)$ by 
$$
G_{2,2}(3i)=\frac{9+6\sqrt3+2\cdot 3^{3/4}\sqrt 2}{27}G_{2,2}(i), \quad G_{2,2}(i/3)=\frac{9+6\sqrt3-2\cdot 3^{3/4}\sqrt 2}{3}G_{2,2}(i).
$$
Therefore,  we obtain
$$ 
  L(f_{144},2)=\frac4{3\sqrt 3}G_{2,2}(i),
$$
after simplification. 
\end{proof}
Combining \zcref{thm: special Lvaule}, \zcref{tab:haupymodul}, and also \zcref{prop:hgtransformations}, we have 
$$
L(f_{144},2)=\frac {2\pi^2}{9\sqrt 3}\pFq32{\frac 12&\frac14&\frac34}{&1&1}{\frac{32}{81}}=\frac {\pi^2}{9}\pFq32{\frac 12&\frac16&\frac56}{&1&1}{1},
$$
and 
$L(f_{144},2)=\frac{8\sqrt 3}9L(f_{16},2)$, by \zcref{cor:716}. 

\medskip

\subsection{\texorpdfstring{$L(f_{300.3.g.b},2)$}{L(f 300.3.g.b,2)}} In this section, 
we confirm the
observation in \cite[Observation $5.2$]{SRiM} through the moduli interpretation and modularity.  
\medskip

\begin{theorem}\label{thm: 300Lvalues}
    $$
      L(f_{300},2)= \frac 4{25} \pi^2E_4(\sqrt{-3})^{1/2}=\frac 4{25} \pi^2\pFq32{\frac12&\frac16&\frac56}{&1&1}{\frac 4{125}}.
    $$    
\end{theorem}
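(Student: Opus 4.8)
The plan is to dispatch the two displayed equalities by different means. The right-hand equality is a CM specialization of the $d=6$ row of \zcref{tab:haupymodul}: since $\tau_6=\sqrt{-3}$ is a CM point with the classical singular modulus $j(\sqrt{-3})=54000=2\cdot 30^3$, one has $t_6(\sqrt{-3})=1728/j(\sqrt{-3})=4/125$, whence
\[
{}_3F_2\!\left[\begin{matrix}\tfrac12 & \tfrac16 & \tfrac56\\ & 1 & 1\end{matrix};\,\tfrac{4}{125}\right]=E_4(\sqrt{-3})^{1/2},
\]
the positive branch being forced since both sides are positive reals. Thus the entire content lies in the left-hand equality $L(f_{300},2)=\tfrac{4}{25}\pi^2E_4(\sqrt{-3})^{1/2}$.

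For the latter I would first realize $f_{300.3.g.b}$ as a weight-$3$ CM form for $K=\Q(\sqrt{-3})$. In the notation of \zcref{sec:modofhgr} this is the $d=6$, $t=4/125$ case, which falls under Case~$2$: with $z=\tfrac{1-\sqrt{1-t}}{2}=\tfrac{25-11\sqrt5}{50}\in\Q(\sqrt5)=:F$, the curve $\widetilde E_6(z)$ has CM by $K$, and $f_{300}=\mathrm{Ind}_{G_K}^{G_\Q}(\psi^2)$ for the Hecke character $\psi$ of $L=KF$ viewed on ideals of $K$. Writing its $q$-expansion as a lattice--theta sum over ideals of $K$ and applying \zcref{lemma:G2vaule} term by term then expresses $L(f_{300},2)$ as a finite $\Q$-linear combination of values of $G_2^*$ (equivalently $\mathcal G^*_{2,(a;N)}$) at CM points commensurable with $\sqrt{-3}$.

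Next I would collapse this combination to a single Eisenstein value, exactly in the style of the $D=15$, $D=24$, and $f_{144}$ computations: use the vanishing $G_2^*(\zeta_3)=0$ at the order-$3$ elliptic point, the Fricke-involution identities \zcref{prop:G2astfricke} and \zcref{prop:g2ast13fricke}, the conversion formulas \zcref{prop:curlyg2asttoclassicales}, and, when three commensurable points occur, the $\Gamma_0(6)$ modular equation of \zcref{prop: gamma06}, so as to reduce $L(f_{300},2)$ to an algebraic multiple of a single $G_{2,3}$ value at a CM point $\tau^*$. Finally, I would convert this $d=3$ evaluation into $E_4(\sqrt{-3})^{1/2}$ via the last identity of \zcref{prop:hgtransformations}, choosing the auxiliary parameter $z$ so that $\tfrac{64z(1-z)^3}{(1+8z)^3}=4/125$; matching $4z(1-z)$ with $t_3(\tau^*)$ and tracking the factor $(1+8z)^{1/2}$ should yield the constant $\tfrac{4}{25}$.

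The main obstacle is twofold: correctly identifying the lattice-theta $q$-expansion of $f_{300.3.g.b}$ from its Hecke-character description (the large level $300=2^2\cdot 3\cdot 5^2$ makes the CM data less transparent than in the $\tau_D$ cases), and carrying out the collapse-plus-transformation bookkeeping so that the stray algebraic factors from the Fricke identities and from \zcref{prop:hgtransformations} combine to the clean rational constant $4/25$. A useful consistency check is that the resulting value must agree with the modularity prediction $C_{6,4/125}\cdot L(f_{300},2)=\pi^2\,{}_3F_2[\tfrac12,\tfrac16,\tfrac56;1,1;4/125]$ from \zcref{thm:modularity} with $C_{6,4/125}=\tfrac{25}{4}$.
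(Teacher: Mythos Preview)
Your outline correctly isolates the nontrivial equality and correctly begins with the lattice-theta realization of $L(f_{300},2)$ via \zcref{lemma:G2vaule}. The gap is in the collapse step: the tools you propose are tuned to the wrong level. Because $f_{300.3.g.b}$ carries the quadratic twist by $\chi_5$ (its nebentypus has conductor divisible by $5$, and its $q$-expansion, written out just before \zcref{lem: 300Lvalue}, is a combination of sums over residue classes mod~$5$), the CM points that actually appear are $(j+\sqrt{-3})/5$ for $j=0,\dots,4$ together with $5\sqrt{-3}$. These are related by level-$5$ Hecke/Fricke structure, not by anything in \zcref{prop:g2ast13fricke}, \zcref{prop:curlyg2asttoclassicales}, or the $\Gamma_0(6)$ modular equation of \zcref{prop: gamma06}; those identities only move you among points commensurable at levels $2,3,4,6$ and cannot disentangle the $\chi_5$-weighted sum $\sum_j\bigl(\tfrac{j}{5}\bigr)G_2^*\!\bigl(\tfrac{j+\sqrt{-3}}{5}\bigr)$.

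The paper accordingly develops level-$5$ machinery from scratch: hauptmoduln $x=\eta(5\tau)^6/\eta(\tau)^6$ on $\Gamma_0(5)$ and $y=\eta(25\tau)/\eta(\tau)$ on $\Gamma_0(25)$ (\zcref{lem: level5hauptmoduln}), and identities expressing both $G_2\otimes\chi_5$ and $G_{2,5}(5\tau)$ as rational functions of $y$ times $G_{2,5}(\tau)$, together with the key squaring relation $G_{2,5}(\tau)^2=40\,\tfrac{125x^2+22x+1}{5x^2+10x+1}\,G_4(5\tau)$ (\zcref{lem: level5forms}). With these, \zcref{lem: 300Lvalue} rewrites $L(f_{300},2)$ as a $\Q(\sqrt5)$-rational multiple of $G_{2,5}(\sqrt{-3}/5)$, and the squaring relation then converts this directly to a multiple of $E_4(\sqrt{-3})^{1/2}$, with the special value $y(\sqrt{-3}/5)$ supplying the algebraic constant that collapses to $4/25$. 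Your proposed detour through $G_{2,3}$ and the $d=3\to d=6$ transformation of \zcref{prop:hgtransformations} is not the route taken and would not by itself produce the needed level-$5$ reductions.
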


To proceed, we prove some useful lemmas. 
\begin{lemma}\label{lem: level5hauptmoduln}
Let  $x(\tau)=t_5(\tau)\coloneqq\eta(5\tau)^6/\eta(\tau)^6$ and $y(\tau)=t_{25}(\tau)\coloneqq\frac{\eta(25\tau)}{\eta(\tau)}$ be hauptmoduln for $\G_0(5)$ and $\G_0(25)$, respectively. Then
$$y(\tau)^6=x(\tau)x(5\tau),$$
\begin{align*}
x=&25y^5 + 25y^4 + 15y^3 + 5y^2 + y\\
xj=&30517578125x^6 + 7324218750x^5 + 615234375x^4 + 20312500x^3 + 196875x^2 + 750x + 1,\\
j(5\tau)=& \frac{(5y^2 + 5y + 1)^3(25y^4 + 25y^3 + 20y^2 + 5y + 1)^3(25y^4 + 5y^2 + 1)^3}{y^5(25y^4 + 25y^3 + 15y^2 + 5y + 1)^5}\left(\tau\right).
\end{align*}
Moreover,  when $s=\sqrt{-3}$, 
the value $y(s/5)$  is the real root of the polynomial $$50z^3  - (25-15\sqrt 5)z^2 -(5-5\sqrt5)z+ 4\sqrt 5- 10.$$  
\end{lemma}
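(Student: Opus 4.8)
The plan is to dispatch the four function-field identities first and then to treat the singular value, which is the only substantive point. The identity $y^6=x(\tau)x(5\tau)$ is an exact equality of eta quotients: writing $\eta(\tau)=q^{1/24}\prod_{n\ge1}(1-q^n)$ one checks directly that both sides equal $q^6\prod_{n\ge1}\bigl((1-q^{25n})/(1-q^n)\bigr)^6$, so no modularity is needed. For the three remaining relations I would use the principle already invoked in the proof of \zcref{prop: gamma06}, namely that a modular function on a genus-zero group which is rational in a hauptmodul is determined by its divisor together with finitely many $q$-coefficients. Since $\Gamma_0(25)\subset\Gamma_0(5)$ with index $5$, the function $x=t_5$ is modular on $\Gamma_0(25)$ and hence rational in $y=t_{25}$; comparing orders of vanishing and poles at the six cusps of $X_0(25)$ shows that $x$ is in fact a polynomial in $y$ of degree $5$ with no constant term, and matching the expansions $x,y=q+\cdots$ through $O(q^6)$ fixes the five coefficients. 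The same template handles $xj$ (here $j$ is $\Gamma_0(5)$-modular with a single simple pole at $\infty$, so $xj$ is holomorphic at $\infty$ and a degree-$6$ polynomial in $x$ with constant term $1$) and $j(5\tau)$ (which is $\Gamma_0(5)$-invariant and of degree $30$ over $X_0(25)$, so its cuspidal divisor forces the displayed numerator/denominator shape); in each case the only real work is the cusp bookkeeping needed to justify the degrees before one matches $q$-expansions.

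For the singular value I would use that $5\cdot(s/5)=s=\sqrt{-3}$ together with the classical evaluation $j(\sqrt{-3})=54000$ (the discriminant $-12$ singular modulus). Substituting $\tau=s/5$ into the formula for $j(5\tau)$ just established gives $54000=R(y_0)$, where $y_0=y(s/5)$ and $R$ is the displayed rational function; clearing denominators yields an explicit degree-$30$ polynomial equation for $y_0$. By the theory of complex multiplication $y_0=t_{25}(\sqrt{-3}/5)$ is algebraic, and factoring this polynomial over $\Q(\sqrt5)$ isolates the stated cubic $50z^3-(25-15\sqrt5)z^2-(5-5\sqrt5)z+4\sqrt5-10$ as the relevant irreducible factor, its conjugate under $\sqrt5\mapsto-\sqrt5$ accounting for the remaining Galois-conjugate values in the same CM orbit; the appearance of $\sqrt5$ reflects the level $5^2$ and the real cyclotomic field $\Q(\sqrt5)=\Q(\zeta_5)^+$. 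Finally, because $q=e^{2\pi i s/5}=e^{-2\pi\sqrt3/5}\in(0,1)$ is real, the eta quotient $y_0=q\prod_{n\ge1}(1-q^{25n})/(1-q^n)$ is real and positive, with numerical value $\approx 0.130$; since the cubic $50z^3+\cdots$ is strictly increasing it has a unique real root, and this numeric check singles out $y_0$ as that root.

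The main obstacle is this singular-value step, and specifically the passage from the degree-$30$ equation to the cubic. The cleanest conceptual route would be to compute the minimal polynomial of $y_0$ directly from the Galois action on the singular moduli attached to the order of discriminant $-300=-4\cdot75$ represented by the form $[25,0,3]$, so that one knows a priori the degree of $y_0$ and the field $\Q(\sqrt5)$ over which it splits into a cubic, rather than blindly factoring a degree-$30$ polynomial; in practice I would confirm the factorization symbolically and then certify the chosen root via the $q$-expansion as above. The preliminary identities, by contrast, are routine once the cusp data of $X_0(25)$ are recorded.
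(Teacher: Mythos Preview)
Your approach is essentially the one the paper takes. For the four function-field identities the paper is even sketchier than you are (it simply invokes ``the relations between the modular functions $x(5\tau)$ and $j(5\tau)$, and $x(5\tau)$ and $y(\tau)$''), so your divisor-plus-$q$-expansion outline via the principle of \zcref{prop: gamma06} is already more detailed. For the singular value the paper does exactly what you propose: plug $j(\sqrt{-3})=54000$ into the $j(5\tau)$ formula at $\tau=s/5$, factor, and select the correct root numerically via the $q$-expansion. The only cosmetic difference is that the paper records an intermediate rational sextic factor
\[
125y^{6}-125y^{5}-50y^{4}-75y^{3}-10y^{2}-5y+1
\]
of the degree-$30$ equation before splitting it over $\Q(\sqrt5)$ into the stated cubic and its Galois conjugate, whereas you propose to factor directly over $\Q(\sqrt5)$; either route lands on the same cubic. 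Your monotonicity check for uniqueness of the real root is a nice addition the paper does not spell out.
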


\begin{proof} 
By the relations between the modular functions $x(5\tau)$ and $j(5\tau)$, and $x(5\tau)$ and $y(\tau)$, we obtain that 
$$
j(5\tau)= \frac{(5y^2 + 5y + 1)^3(25y^4 + 25y^3 + 20y^2 + 5y + 1)^3(25y^4 + 5y^2 + 1)^3}{y^5(25y^4 + 25y^3 + 15y^2 + 5y + 1)^5}\left(\tau\right).
$$
As a consequence, we use the special value $j(s)=54000$ to obtain the special value $y(s/5)$ via the relationship. By approximating the value $y(s/5)$ using the $q$-expansion of $y(\tau)$, one   deduces that $y(s/5)$ is the real root with absolute less $1$ of the polynomial  
\begin{align*}
 &125y^6 - 125y^5 - 50y^4 - 75y^3 - 10y^2 - 5y + 1\\
 =&\frac1{20}(50y^3  - (25-15\sqrt 5)y^2 -(5-5\sqrt5)y+ 4\sqrt 5- 10)\\
  &\times (50y^3  - (25+15\sqrt 5)y^2 -(5+5\sqrt5)y- 4\sqrt 5- 10).
\end{align*}
Hence $y(s/5)$  is the real root of $50y^3  - (25-15\sqrt 5)y^2 -(5-5\sqrt5)y+ 4\sqrt 5- 10$. 
\end{proof}

\medskip

\begin{lemma} \label{lem: level5forms}
Using the same notations as in \zcref{lem: level5hauptmoduln},
\begin{align*}
   G_2^\ast\otimes\chi_5(\tau)=& \frac{6y(5y^2-1)}{125y^4+100y^3+45y^2+10y+1}G_{2,5}(\tau)\\
   G_{2,5}(5\tau)=&\frac{5y^4+10y^3+9y^2+4y+1}{125y^4+100y^3+45y^2+10y+1}G_{2,5}(\tau),\\
  G_{2,5}(\tau)^2 =&40\cdot \frac{125x^2 + 22x + 1}{5x^2 + 10x + 1}(\tau) \cdot G_4(5\tau).
\end{align*}
\end{lemma}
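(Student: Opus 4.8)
The plan is to treat all three identities by a single genus-zero principle. Since $X_0(5)$ and $X_0(25)$ both have genus zero, with hauptmoduln $x=t_5$ and $y=t_{25}$ respectively, the quotient of any two nonzero meromorphic modular forms of equal weight on one of these groups is a weight-zero modular function, hence a rational function of the corresponding hauptmodul. The three identities are exactly such quotients: the first two compare weight-$2$ forms on $\Gamma_0(25)$, while the third compares weight-$4$ forms on $\Gamma_0(5)$. So the overall strategy is to verify that every form occurring is a genuine holomorphic modular form of the stated weight on the stated group, and then to determine the rational function that its quotient must be.

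First I would control the quasimodularity of $G_2$. Using the $q$-expansion \zcref{eq:g2astfourier}, the combination $G_{2,5}(\tau)=5G_2^*(5\tau)-G_2^*(\tau)$ is holomorphic of weight $2$ on $\Gamma_0(5)$ because the non-holomorphic terms $-\pi/(2\Im(\tau))$ cancel; being a form on $\Gamma_0(5)$ it is also one on $\Gamma_0(25)$, and likewise $G_{2,5}(5\tau)$ is weight $2$ on $\Gamma_0(25)$. For the twist $G_2\otimes\chi_5$, the character $\chi_5$ kills the constant term and the anomalous non-holomorphic contribution, so by the twisting lemma it is a holomorphic weight-$2$ form on $\Gamma_0(25)$ with trivial character. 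Thus in the first two identities both sides are weight-$2$ forms on $\Gamma_0(25)$, and in the third both $G_{2,5}^2$ and $G_4(5\tau)$ are weight-$4$ forms on $\Gamma_0(5)$; in each case the quotient is a rational function of $y$ (resp. $x$).

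To pin down these rational functions I would bound the degrees of numerator and denominator from the divisors. On $X_0(5)$ there are only two cusps and two elliptic points of order two, so the weight-$4$ quotient in the third identity is controlled by the zeros of $G_{2,5}^2$ and of $G_4(5\tau)$ in a fundamental domain (the roots of $5x^2+10x+1$ matching the zeros of $G_4(5\tau)$); this forces a rational function of $x$ of the claimed bidegree $(2,2)$, whose coefficients are then fixed by matching the leading $q$-expansion coefficients. On $X_0(25)$ the analogous computation is carried out over the six cusps, using the explicit polynomial relation $x=25y^5+25y^4+15y^3+5y^2+y$ and $y^6=x(\tau)x(5\tau)$ from \zcref{lem: level5hauptmoduln} to track how the cusps and zeros of the level-$5$ forms behave in the degree-$5$ cover $X_0(25)\to X_0(5)$. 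This should produce the common quartic denominator $125y^4+100y^3+45y^2+10y+1$ (the zero divisor of $G_{2,5}$ on $X_0(25)$) together with the numerators $6y(5y^2-1)$ and $5y^4+10y^3+9y^2+4y+1$. In each identity the final step is to compare $q$-expansions to an order exceeding the total degree of the candidate rational function, which makes the match conclusive.

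The main obstacle I expect is the divisor bookkeeping on $X_0(25)$: with six cusps and the Eisenstein series $G_2\otimes\chi_5$ and $G_{2,5}(5\tau)$ built from the quasimodular $G_2$, one must determine the exact vanishing order of each form at every cusp (equivalently, the cusp widths and the local expansions under the relevant scaling and Atkin--Lehner matrices) before one can justify that the denominators are precisely quartic and the numerators of the stated degrees. The Fricke relations of \zcref{prop:G2astfricke} and the hauptmodul relations of \zcref{lem: level5hauptmoduln} are the natural tools here; alternatively one may avoid a complete divisor analysis by first extracting an a priori degree bound from $\dim M_k$ of the relevant spaces and then certifying the identity via sufficiently many $q$-coefficients. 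By contrast, the weight-$4$ identity on the two-cusped curve $X_0(5)$ is routine.
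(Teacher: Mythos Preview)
Your approach is essentially the same as the paper's: the paper observes that the ratios $G_2\otimes\chi_5/G_{2,5}$ and $G_{2,5}(5\tau)/G_{2,5}(\tau)$ are modular functions on $\Gamma_0(25)$ (hence rational in $y$), and that $G_{2,5}(\tau)^2/G_4(5\tau)$ is a modular function on $\Gamma_0(5)$ (hence rational in $x$), and then simply omits the remaining details. Your proposal fills in exactly those omitted details---the holomorphy/level checks, the divisor bookkeeping, and the $q$-expansion verification---so it is a correct elaboration of the paper's argument rather than a different route.
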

The main idea is that  $ g=G_2\otimes\chi_5(\tau)$ and $G_{2,5}(5\tau)$ are modular forms on $\G_0(25)$, and hence $g/G_{2,5}$ is a modular function on $\G_0(25)$ that can be written as a rational function of $y$. Similarly, both $ G_{2,5}(\tau)^2$ and $G_4(5\tau)$ are  weight 4 form on $\G_0(5)$. Therefore, their ratio is a rational function of $x(\tau)$. We will omit the details here.

\bigskip

Let $f_{300}$ be the cusp form $f_{300.3.g.b}$ in LMFDB label. Its $q$-expansion is 
\begin{align*}
f_{300}(q)=&\sum_{j=1}^4\left(\frac{j}{5}\right)\sum'_{m,n \in \Z}(5m+(5n+1)(j+\sqrt{-3}))^2q^{|5m+(5n+1)(j+\sqrt{-3})|^2}\\
&+\sum_{j=1}^4\left(\frac{j}{5}\right)\sum'_{m,n \in \Z}(5m+(5n+2)(j+\sqrt{-3}))^2q^{|5m+(5n+2)(j+\sqrt{-3})|^2}\\
&-\frac{1}{2}\sum'_{m,n \in \Z}(5m+n\sqrt{-3})^2q^{|5m+n\sqrt{-3}|^2}+\frac{1}{2}\sum'_{m,n \in \Z}(m+5n\sqrt{-3})^2q^{|m+5n\sqrt{-3}|^2}. 
\end{align*}

\begin{lemma} \label{lem: 300Lvalue} 
We have
\begin{align*}
  L(f_{300},2)=&\frac{\sqrt 5}{25}  G_2^*\otimes\chi_5\left(\frac{\sqrt{-3}}5\right)+\frac1{25}G_{2,5}\left(\frac {\sqrt{-3}}5\right) + \frac 15G_{2,5}\left(\sqrt{-3}\right). 
\end{align*}

\end{lemma}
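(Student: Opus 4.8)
The plan is to evaluate $L(f_{300},2)$ by applying the mechanism behind \zcref{lemma:G2vaule} to each of the four lattice-sum pieces of $f_{300}$ separately. Writing a generic piece as $c\cdot\tfrac12\sum'_{m,n}\lambda^2 q^{|\lambda|^2}$, where $\lambda$ is a $\Z$-linear form in $m,n$ valued in $\Q(\sqrt{-3})$ and the prime excludes the zero vector, the coefficient of $q^N$ is $c\cdot\tfrac12\sum_{|\lambda|^2=N}\lambda^2$, so its contribution to $L(f_{300},2)=\sum_N a_N N^{-2}$ is $c\cdot\tfrac12\sum'_{m,n}\lambda^2|\lambda|^{-4}=c\cdot\tfrac12\sum'_{m,n}\bar\lambda^{-2}$. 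Thus the computation reduces to identifying four Eisenstein-type lattice sums. Throughout I will use the involutions $\lambda\mapsto\bar\lambda$ (i.e. $\sqrt{-3}\mapsto-\sqrt{-3}$) and $\lambda\mapsto-\lambda$, both of which fix $\lambda^2$ and $|\lambda|^2$, together with $\chi_5(-1)=1$, to replace $\bar\lambda^{-2}$ by $\lambda^{-2}$ and to normalize congruence conditions.

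For the two untwisted pieces (the $-\tfrac12$ and $+\tfrac12$ sums), the linear forms are $\lambda=5m+n\sqrt{-3}$ and $\lambda=m+5n\sqrt{-3}$. Factoring the scalar out (a $5$ in the first case, absorbing $5$ into the period in the second) and using $G_2^\ast(\tau)=\tfrac12\sum'(m\tau+n)^{-2}$ directly gives contributions $-\tfrac1{25}G_2^\ast(\sqrt{-3}/5)$ and $G_2^\ast(5\sqrt{-3})$. I then convert both to $G_{2,5}$ via $G_{2,5}(\tau)=5G_2^\ast(5\tau)-G_2^\ast(\tau)$ evaluated at $\tau=\sqrt{-3}$ and $\tau=\sqrt{-3}/5$; the two occurrences of $G_2^\ast(\sqrt{-3})$ cancel, leaving exactly $\tfrac1{25}G_{2,5}(\sqrt{-3}/5)+\tfrac15G_{2,5}(\sqrt{-3})$, which are the last two terms of the claim.

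The essential step is the pair of twisted pieces carrying the weights $\left(\tfrac{j}{5}\right)$ and the residue conditions $5n+1,\,5n+2$. Here I will use the Gauss-sum expression for the twist, $(G_2\otimes\chi_5)(\tau)=\tfrac1{\sqrt5}\sum_{a=1}^4\chi_5(a)\,G_2^\ast(\tau+\tfrac a5)$, valid since $g(\chi_5)=\sqrt5$; expanding $G_2^\ast\bigl((a+\sqrt{-3})/5\bigr)$ as a lattice sum yields $\tfrac{\sqrt5}{25}(G_2\otimes\chi_5)(\sqrt{-3}/5)=\tfrac12\sum_{a=1}^4\chi_5(a)\sum'_{m,n}\bigl(m(a+\sqrt{-3})+5n\bigr)^{-2}$. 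The plan is to match this to the contribution of the twisted pieces by computing, for each $v=A+B\sqrt{-3}$, the total weight with which it occurs on each side: on the Gauss-sum side a vector with $A,B\not\equiv0\pmod5$ occurs with weight $\chi_5(A)\chi_5(B)$ and weight $0$ otherwise (using $\sum_a\chi_5(a)=0$), while on the $f_{300}$ side the conditions $B\equiv1,2\pmod5$ come from $5n+1,5n+2$ and $A\equiv Bj\pmod5$ fixes $j$. The two sides then agree after applying $v\mapsto-v$ to see that $\sum_{B\equiv1,2}=\sum_{B\equiv3,4}=\tfrac12\sum_{B\not\equiv0}$, so that the twisted pieces sum to $\tfrac{\sqrt5}{25}(G_2\otimes\chi_5)(\sqrt{-3}/5)$.

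I expect the bookkeeping in this last step to be the main obstacle: one must simultaneously track the conjugation $\bar\lambda^{-2}\mapsto\lambda^{-2}$, the congruence class of $B$ produced by $5n+r$, the normalization $g(\chi_5)=\sqrt5$, and the factor of $2$ arising from the $\pm$ symmetry, and check that all constants collapse to precisely $\tfrac{\sqrt5}{25}$. By contrast, the untwisted pieces and the final assembly are routine once the definitions of $G_{2,5}$ and $G_2\otimes\chi_5$ are in place.
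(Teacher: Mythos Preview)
Your proposal is correct and follows the same overall strategy as the paper: split $L(f_{300},2)$ into its four lattice pieces, identify the two untwisted ones as $-\tfrac1{25}G_2^\ast(\sqrt{-3}/5)+G_2^\ast(5\sqrt{-3})$ (which repackage as the two $G_{2,5}$ terms exactly as you say), and recognize the twisted pieces via the Gauss-sum formula for $G_2\otimes\chi_5$. The only real difference is in how the twisted pieces are processed. The paper packages the congruence conditions $5n+1,\,5n+2$ through the Eisenstein series $\mathcal{G}^\ast_{2,(a;5)}$ and invokes the identity $\mathcal{G}^\ast_{2,(1;5)}+\mathcal{G}^\ast_{2,(2;5)}=\tfrac12\bigl(G_2^\ast-G_2^\ast(5\,\cdot\,)\bigr)$ to reduce the twisted contribution to $\tfrac1{25}\sum_{j}\chi_5(j)\bigl(G_2^\ast((j+\sqrt{-3})/5)-G_2^\ast(\sqrt{-3})\bigr)$, after which $\sum_j\chi_5(j)=0$ and the Gauss sum finish the job. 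You instead match weights vector-by-vector on both sides, which amounts to rederiving that identity inline; your argument is self-contained and slightly more elementary, while the paper's reuses machinery already set up in \zcref{prop: G2astfourier} and the surrounding discussion. One small imprecision worth tightening: the involution $\lambda\mapsto\bar\lambda$ does not fix $\lambda^2$ pointwise. What you actually need (and implicitly use) is that the weighted summation set for the twisted pieces is stable under $\lambda\mapsto-\bar\lambda$, and this combined involution, together with $\chi_5(-1)=1$, is what lets you replace $\sum\bar\lambda^{-2}$ by $\sum\lambda^{-2}$.
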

\begin{proof}

According to the $q$-expansions and the fact that 
$$
  \mathcal{G}^*_{2,(1;5)}(\tau)+\mathcal{G}^*_{2,(2;5)}(\tau) = \frac12\left(G_2^*(\tau)-G_2^*(5\tau)\right),
$$
 we can conclude that  
\begin{align*}
    L(f_{300.3.g.b},2)=&\frac{1}{25}\sum_{j=1}^4\left(\frac{j}{5}\right)\left(G_2^*\left(\frac{j+\sqrt{-3}}{5}\right)-G_2^*(\sqrt{-3})\right)-\frac{1}{25}G_2^*\left(\frac{\sqrt{-3}}{5}\right)+G_2^*(5\sqrt{-3}).
\end{align*}
Moreover, the first term can be simplified as follows. 
\begin{align*}
\sum_{j=1}^4&\left(\frac{j}{5}\right)\left(G_2^*\left(\frac{j+\sqrt{-3}}{5}\right)-G_2^*(\sqrt{-3})\right)=\sum_{j=1}^4\left(\frac{j}{5}\right)G_2^*\left(\frac j 5+ \frac{\sqrt{-3}}{5}\right)
=\sqrt 5 \cdot \left(G^\ast_2\otimes\chi_5\right)(\sqrt{-3}/5). 
\end{align*}
This completes the proof. 
\end{proof}

\begin{proof}[Proof of \zcref{thm: 300Lvalues}]

Combing the relations given in Lemmas \ref{lem: 300Lvalue}, \ref{lem: level5forms}, and \ref{lem: level5hauptmoduln}, we deduce that 
\begin{align*}
     L(f_{300},2)=&\frac{ 150y^4 + (150+30\sqrt 5)y^3 + 90y^2 + (30-6\sqrt 5)y + 6}{3125y^4 + 2500y^3 + 1125y^2 + 250y + 25}G_{2,5}\left(\frac {\sqrt{-3}}5\right)\\
         =& \frac4{25}\cdot \pi^2 E_4^{1/2}(\sqrt{-3}),
\end{align*}
by taking 
$$
G_{2,5}(\sqrt{-3}/5) = 2\sqrt 5\cdot \sqrt{\frac{125x^2 + 22x + 1}{5x^2 + 10x + 1}(\sqrt{-3}/5)} \cdot G_4(\sqrt{-3})^{1/2},
$$
because the ratio $G_{2,5}(\sqrt{-3}/5)/G_4(\sqrt{-3})^{1/2}$ is positive. 
\end{proof}

\medskip

\begin{example}
    As mentioned in \zcref{rmk:final}, one can compute $L$-values of certain CM forms of higher weight in a similar way. Here we give two examples arising from powers of  Hecke characters in this paper.  For instance,
 \begin{equation}
        L(f_{8.5.d.a} ,4)= G_4(\sqrt{2}i) =\frac{\pi^4}{90} \pFq 32{\frac 12&\frac 16&\frac 56}{&1&1}{\frac{1728}{8000}}^2,
    \end{equation}
    where  $
        f_{8.5.d.a} = \frac 12\sum_{m,n\in \Z}(m-n\sqrt{-2})^4q^{m^2+2n^2}$. For \begin{align}
        f_{32.7.d.a}=& \frac 12\sum_{m,n\in \Z}(m-n\sqrt{-2})^6q^{m^2+2n^2}-\frac 12\sum_{m,n\in \Z}(2m-n\sqrt{-2})^6q^{4m^2+2n^2}\\
        &-\sum_{m,n\in \Z}(2m+1-(2n+1)\sqrt{-2})^6q^{(2m+1)^2+2(2n+1)^2},
    \end{align}
    we have
    \begin{align}
        L(f_{32.7.d.a} ,6)&= \frac{33}{32}G_6(\sqrt{2}i) -\frac1{64} (G_6(\sqrt{2}i/2)+2G_6((\sqrt{2}i+1)/2))=\frac{\pi^6}{768}\pfq{3}{2}{\frac{1}{2}&\frac{1}{2}&\frac{1}{2}}{&1&1}{-1}^3.  
    \end{align}
   When computing the Petersson inner product of a CM weight $2$ cuspform $h$  on $\Gamma_1(N)$ with real Fourier coefficients attached to a Hecke character $\psi_h$ of the CM field $K=\Q(\sqrt{d_K})$, 
    $$
      \gen{h,h}:=\int_{\G_1(N)\backslash \H}h\ol h dxdy, \quad \tau=x+iy,
    $$
    one has 
    $$
      \gen{h,h}= \frac{N}{4\pi^2}\frac{h_K}{w_K\sqrt{|d_K|}}L(\psi_h^2,2)\prod_{p\mid N}(1-\chi_K(p)p^{-1}). 
    $$
    For instance, as in \zcref{ex:EC32}, 
     $$
      \gen{f_{32.2.a.a},f_{32.2.a.a}}= L(f_{16.3.c.a},2)/\pi^2=\Omega_{-4}^2/32\pi^2.  
    $$
    Similarly, 
    $$
      \gen{f_{256.2.a.a},f_{256.2.a.a}}= \gen{f_{256.2.a.d},f_{256.2.a.d}}= 8\sqrt{2}L(f_{8.3.d.a},2)/\pi^2=\Omega_{-8}^2/6\pi^2.  
    $$
\end{example}

\medskip

\section{Appendix}\label{appendix}

\subsection{Fundamental Domains}
In \zcref{fig:fundamentaldomains}, we give the fundamental domains (the shaded regions) for the groups $\G_0(1)$, $\G_0(2)$, $\G_0(3)^+$,  and $\G_0(2)^+$, corresponding to Ramanujan's alternative bases given in \zcref{tab:haupymodul}. 

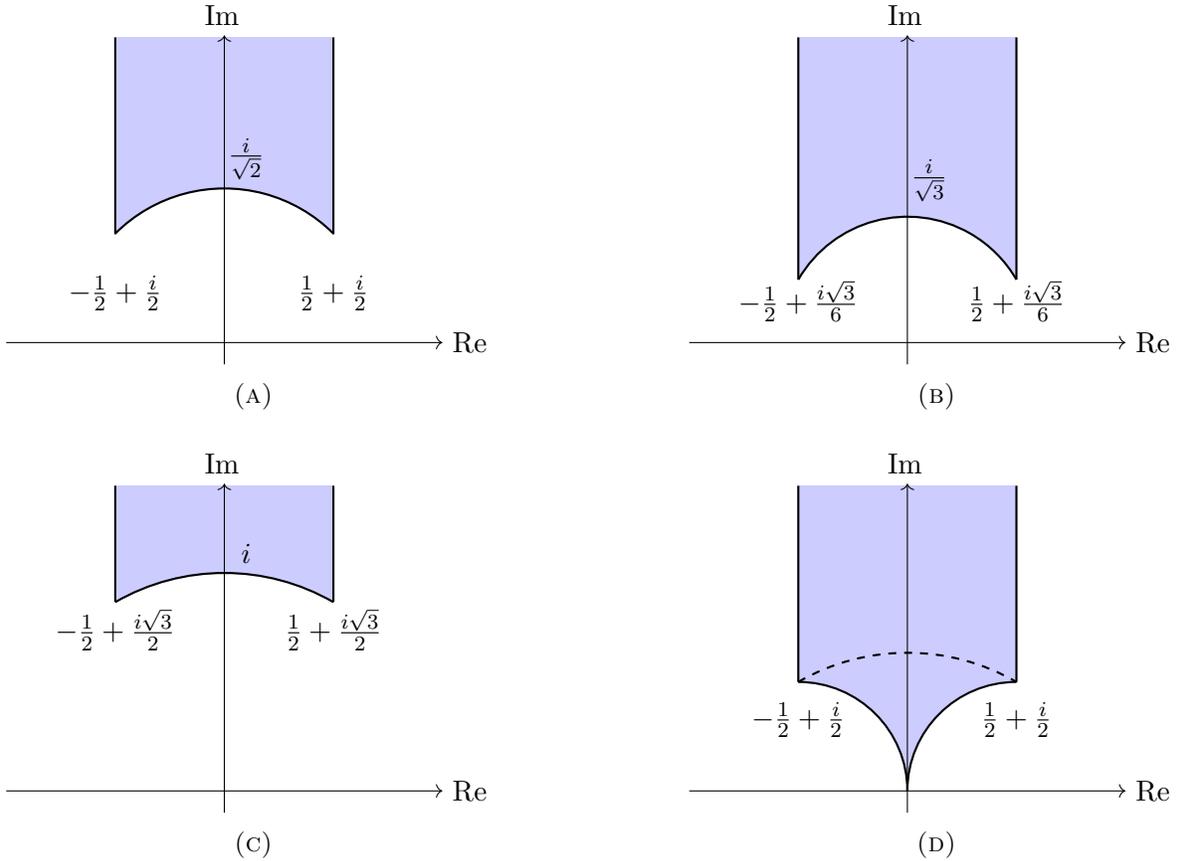
\begin{figure}[h!]
\centering

\begin{subfigure}[c]{0.45\textwidth}
\centering
\begin{tikzpicture}[scale=2.9]

\pgfmathsetmacro{\r}{1/sqrt(2)}
\pgfmathsetmacro{\h}{sqrt(\r*\r - 0.5*0.5)}
\pgfmathsetmacro{\ang}{acos(0.5/\r)}

\begin{scope}
  \clip (-0.5,0) rectangle (0.5,1.4);
  \fill[blue!20] (-1,0) rectangle (1,1.4);
  \fill[white] (0,0) circle (\r);
\end{scope}

\draw[thick] ({-0.5},{\h})
     arc[start angle={180-\ang}, end angle={\ang}, radius=\r];

\draw[thick] (-0.5,0.497) -- (-0.5,1.4);
\draw[thick] (0.5,0.497) -- (0.5,1.4);

\draw[->] (-1,0) -- (1,0) node[right]{$\Re$};
\draw[->] (0,-0.1) -- (0,1.41) node[above]{$\Im$};

\node[above] at (-0.5,0.1) {$-\frac{1}{2}+\frac{i}{2}$};

\node[above] at (0.5,0.1) {$\frac{1}{2}+\frac{i}{2}$};
\node[above] at (0.1,0.7) {$\frac{i}{\sqrt{2}}$};

\end{tikzpicture}

\caption{}
\end{subfigure}
\hfill
\begin{subfigure}[c]{0.45\textwidth}
\centering
\begin{tikzpicture}[scale=2.9]

\pgfmathsetmacro{\r}{1/sqrt(3)}
\pgfmathsetmacro{\h}{sqrt(\r*\r - 0.5*0.5)}
\pgfmathsetmacro{\ang}{acos(0.5/\r)}

\begin{scope}
  \clip (-0.5,0) rectangle (0.5,1.4);
  \fill[blue!20] (-1,0) rectangle (1,1.4);
  \fill[white] (0,0) circle (\r);
\end{scope}

\draw[thick] ({-0.5},{\h})
     arc[start angle={180-\ang}, end angle={\ang}, radius=\r];

\draw[thick] (-0.5,0.29) -- (-0.5,1.4);
\draw[thick] (0.5,0.29) -- (0.5,1.4);

\draw[->] (-1,0) -- (1,0) node[right]{$\Re$};
\draw[->] (0,-0.1) -- (0,1.41) node[above]{$\Im$};

\node[above] at (-0.5,0.05) {$-\frac{1}{2}+\frac{i\sqrt{3}}{6}$};

\node[above] at (0.5,0.05) {$\frac{1}{2}+\frac{i\sqrt{3}}{6}$};
\node[above] at (0.1,0.6) {$\frac{i}{\sqrt{3}}$};

\end{tikzpicture}
\caption{}
\end{subfigure}

\vspace{1em}

\begin{subfigure}[c]{0.45\textwidth}
\centering
\begin{tikzpicture}[scale=2.9]

\pgfmathsetmacro{\r}{1}
\pgfmathsetmacro{\h}{sqrt(\r*\r - 0.5*0.5)}
\pgfmathsetmacro{\ang}{acos(0.5/\r)}

\begin{scope}
  \clip (-0.5,0) rectangle (0.5,1.4);
  \fill[blue!20] (-1,0) rectangle (1,1.4);
  \fill[white] (0,0) circle (\r);
\end{scope}

\draw[thick] ({-0.5},{\h})
     arc[start angle={180-\ang}, end angle={\ang}, radius=\r];

\draw[thick] (-0.5,0.867) -- (-0.5,1.4);
\draw[thick] (0.5,0.867) -- (0.5,1.4);

\draw[->] (-1,0) -- (1,0) node[right]{$\Re$};
\draw[->] (0,-0.1) -- (0,1.41) node[above]{$\Im$};

\node[above] at (-0.5,0.6) {$-\frac{1}{2}+\frac{i\sqrt{3}}{2}$};
\node[above] at (0.5,0.6) {$\frac{1}{2}+\frac{i\sqrt{3}}{2}$};
\node[above] at (0.1,1) {$i$};

\end{tikzpicture}
\caption{}
\end{subfigure}
\hfill
\begin{subfigure}[c]{0.45\textwidth}
\centering
\begin{tikzpicture}[scale=2.9,baseline={(0,0)}]

\pgfmathsetmacro{\r}{0.5} 
\pgfmathsetmacro{\hL}{sqrt(\r*\r - (-0.5 + 0.5)^2)} 
\pgfmathsetmacro{\hR}{sqrt(\r*\r - (0.5 - 0.5)^2)} 

\pgfmathsetmacro{\angL}{acos((0.5-0.5)/\r)}   
\pgfmathsetmacro{\angR}{acos((-0.5+0.5)/\r)}  

\begin{scope}

    \clip (-0.5,0) rectangle (0.5,1.4);
   
    \fill[blue!20] (-1,0) rectangle (1,1.4);
   
    \fill[white] (-0.5,0) circle (\r);
  
    \fill[white] (0.5,0) circle (\r);
\end{scope}

\draw[thick] (-0.5,0.5) arc[start angle=90,end angle=0,radius=\r];
\draw[thick] (0,0) arc[start angle=180,end angle=90,radius=\r];
\draw[dashed,thick] (-0.5,0.5) arc[start angle=120,end angle=60,radius=2*\r];

\draw[thick] (-0.5,0.5) -- (-0.5,1.4);
\draw[thick] (0.5,0.5) -- (0.5,1.4);

\draw[->] (-1,0) -- (1,0) node[right]{$\Re$};
\draw[->] (0,-0.1) -- (0,1.41) node[above]{$\Im$};

\node[above] at (-0.5,0.2) {$-\frac12+\frac{i}{2}$};
\node[above] at (0.5,0.2) {${\frac12}+\frac{i}{2}$};

\end{tikzpicture}
\caption{}
\end{subfigure}

\caption{Fundamental domains for (A) $\G_0(2)^+$, (B) $\G_0(3)^+$, (C) $\G_0(1)$ and (D) $\G_0(2)$ respectively.}
\label{fig:fundamentaldomains}
\end{figure}

\medskip
\subsection{The modularity} \label{append:all}
Similar to \zcref{tab:main-a},  \zcref{tab:t3t4list} and \zcref{tab:t6list} collects all rational CM-values of the hauptmodul $t_d$ for $d=3$, $4$, and $6$,  and the modularity of the corresponding hypergeometric Galois representations. 
These $t_d$-values are determined  by the relations between the $j$-invariant and $t_d$ described below, 
together with the fact that  $\Q(j(\tau_d)) \subset \R$ has degree at most two over $\Q$; the corresponding CM points and $j$-values are listed in next subsection. 
	$$
		\begin{diagram}
			\node[1]{\G_0^+(3): t_3}    \arrow{se,l,-}{2} 
			\node[2]{ \SL_2(\Z): \frac{1728}j}   \arrow{sw,l,-}{4} \arrow{se,l,-}{3} 
			\node[2]{\G_0^+(2): t_4}   \arrow{sw,l,-}{2}  \\ 
			\node[2]  {\G_0(3): z} 
			\node[2]   {\G_0(2): t_2} 
		\end{diagram}
		$$
 $$
    t_3= 4z(1-z), \quad    \frac{1728}j=\frac{64z(1-z)^3}{(1+8z)^3}=-\frac{27t_2}{(1-4t_2)^3}, \quad  t_4=\frac{-4t_2}{(t_2-1)^2}.  
    $$

\setlength{\LTcapwidth}{\textwidth}

\begin{table}[ht]
\centering

\renewcommand{\arraystretch}{1.5}
\scalebox{1}{
\begin{tabular}{@{\hspace{1.8em}}c@{\hspace{3.2em}}c}

\begin{subtable}[t]{0.32\textwidth}
\centering
\caption{$d=3$}
\begin{tabular}{|c|c|c|c|}
\hline
$t$ & $\tau_d$ & $f_{d,t}$ & $C_{d,t}$ \\ \hline
$1$ & $\tfrac{i}{\sqrt{3}}$ & $f_{12.3.c.a}$ & $18$ \\
$\tfrac{4}{125}$ & $\tfrac{\sqrt{15}i}{3}$ & $f_{15.3.d.b}$ & $\tfrac{45}{4}$ \\
$\tfrac{1}{2}$ & $\tfrac{i\sqrt{6}}{3}$ & $f_{24.3.h.b}$ & $9$ \\
$-\tfrac{9}{16}$ & $\tfrac{1+\sqrt{3}i}{2}$ & $f_{27.3.b.a}$ & $9$ \\
$-\tfrac{1}{16}$ & $\tfrac{3+\sqrt{51}i}{6}$ & $f_{51.3.c.a}$ & $9$ \\
$-\tfrac{1}{80}$ & $\tfrac{3+\sqrt{75}i}{6}$ & $f_{75.3.c.a}$ & $9$ \\
$-\tfrac{1}{1024}$ & $\tfrac{3+\sqrt{123}i}{6}$ & $f_{123.3.b.a}$ & $9$ \\
$-\tfrac{1}{3024}$ & $\tfrac{3+\sqrt{147}i}{6}$ & $f_{147.3.b.a}$ & $9$ \\
$-\tfrac{1}{250000}$ & $\tfrac{3+\sqrt{267}i}{6}$ & $f_{267.3.b.a}$ & $9$ \\ \hline
\end{tabular}
\end{subtable}

&

\begin{subtable}[t]{0.32\textwidth}
\centering
\caption{$d=4$}
\begin{tabular}{|c|c|c|c|}
\hline
$t$ & $\tau_d$ & $f_{d,t}$ & $C_{d,t}$ \\ \hline
$-\tfrac{256}{3^47^2}$ & $\tfrac{1+\sqrt{7}i}{2}$ & $f_{7.3.b.a}$ & $21$ \\
$1$ & $\tfrac{i}{\sqrt{2}}$ & $f_{8.3.d.a}$ & $24$ \\
$\tfrac{1}{7^4}$ & $\tfrac{3i}{\sqrt{2}}$ & $f_{8.3.d.a}$ & $\tfrac{56}{3}$ \\
$-\tfrac{16}{9}$ & $\tfrac{1+\sqrt{3}i}{2}$ & $f_{12.3.c.a}$ & $12$ \\
$\tfrac{32}{81}$ & $i$ & $f_{16.3.c.a}$ & $12$ \\
$\tfrac{1}{9}$ & $\tfrac{i\sqrt{6}}{2}$ & $f_{24.3.h.a}$ & $12$ \\
$-\tfrac{1}{324}$ & $\tfrac{1+\sqrt{13}i}{2}$ & $f_{52.3.b.a}$ & $12$ \\
$\tfrac{1}{9801}$ & $\tfrac{\sqrt{22}i}{2}$ & $f_{88.3.b.a}$ & $12$ \\
$-\tfrac{1}{25920}$ & $\tfrac{1+5i}{2}$ & $f_{100.3.b.a}$ & $12$ \\
$-\tfrac{1}{777924}$ & $\tfrac{1+\sqrt{37}i}{2}$ & $f_{148.3.b.a}$ & $12$ \\
$\tfrac{1}{96059601}$ & $\tfrac{\sqrt{58}i}{2}$ & $f_{232.3.b.a}$ & $12$ \\ \hline
\end{tabular}
\end{subtable}
\end{tabular}
}
\caption{\centering The modularity for (A) $d=3$ and (B) $d=4$}
\label{tab:t3t4list}
\end{table}

\begin{table}[ht]
\centering
\renewcommand{\arraystretch}{1.4}

\begin{minipage}[t]{0.48\textwidth}
\centering
\caption*{ }

\begin{tabular}{|c|c|c|c|}
\hline
$t$ & $\tau_d$ & $f_{d,t}$ & $C_{d,t}$ \\ \hline
$1$ & $i$ & $f_{144.3.g.a}$ & $9$ \\
$\tfrac{4}{125}$ & $\sqrt{3}i$ & $f_{300.3.g.b}$ & $\tfrac{25}{4}$ \\
$\tfrac{27}{125}$ & $\sqrt{2}i$ & $f_{800.3.g.a}$ & $\tfrac{25}{2}$ \\
$-\tfrac{27}{512}$ & $\tfrac{1+\sqrt{11}i}{2}$ & $f_{704.3.h.a}$ & $16$ \\ \hline
\end{tabular}

\end{minipage}
\hfill
\begin{minipage}[t]{0.48\textwidth}
\centering
\caption*{ }

\begin{tabular}{|c|c|c|}
\hline
$t$ & $\tau_d$ & $f_{d,t}=f_D\otimes\chi_{1-t}$ \\ \hline
$-\tfrac{64}{125}$ & $\frac{1+\sqrt{7}i}{2}$ & $f_{7.3.c.a}\otimes\chi_{1-t}$\\ 
$\tfrac{64}{5^3 17^3}$ & $\sqrt{7}i$ & $f_{7.3.c.a}\otimes\chi_{1-t}$\\ 
$\frac{8}{11^3}$ & $2i$ & $f_{16.3.c.a}\otimes\chi_{1-t}$ \\ 
$-\frac{1}{512}$ & $\frac{1+\sqrt{19}i}{2}$ & $f_{19.3.b.a}\otimes\chi_{1-t}$ \\ 
$-\frac{9}{2^9 5^3}$ & $\frac{1+\sqrt{27}i}{2}$ & $f_{27.3.b.a}\otimes\chi_{1-t}$ \\ 
$-\frac{1}{2^{12}5^3}$ & $\frac{1+\sqrt{43}i}{2}$ & $f_{43.3.b.a}\otimes\chi_{1-t}$ \\ 
$-\frac{1}{2^{9}5^3 11^3}$ & $\frac{1+\sqrt{67}i}{2}$ & $f_{67.3.b.a}\otimes\chi_{1-t}$ \\ 
$-\frac{1}{2^{12}5^3 23^3 29^3}$ & $\frac{1+\sqrt{163}i}{2}$ & $f_{163.3.b.a}\otimes\chi_{1-t}$ \\ \hline
\end{tabular}

\end{minipage}
\caption{\centering The modularity for $d=6$ (see \zcref{cor:d6})}
\label{tab:t6list}
\end{table}

\newpage
\subsection{\texorpdfstring{$j$}{j}-values for certain CM points }

Let $\mathcal{O}$ be an imaginary quadratic  order of discriminant $D$, and $\tau \in \H$ be a point such that the elliptic curve $\C/ (\Z+\tau\Z)$ has CM by  $\mathcal{O}$. In \zcref{tab:jvalues}, we list discriminant of all the  imaginary quadratic orders with class number at most $2$  along with the corresponding  points $\tau$ and their $j$-values.  
\renewcommand{\arraystretch}{1.4}

\begin{longtable}{|c||c|c|}

\hline
disc. of CM order & $\tau$ & $j(\tau)$ \\
\hline
\endfirsthead

\hline
disc. of CM  order & $\tau$ & $j(\tau)$ \\
\hline
\endhead

\hline
\endfoot

\endlastfoot

$-3$  & $\frac{1+\sqrt{3}i}{2}$ & $0$ \\
\hline
$-4$  & $i$ & $12^3$ \\
\hline
$-7$  &$\frac{1+\sqrt{7}i}{2}$  & $-15^3$  \\
\hline
$-8$  & $\sqrt{2}i$ & $20^3$  \\
\hline
$-11$ & $\frac{1+\sqrt{11}i}{2}$ & $-32^3$  \\
\hline
$-12$ & $\sqrt{3}i$ & $54000$  \\
\hline
$-15$ & $\frac{1+\sqrt{15}i}{2}$  & $-\frac{135}{2}(1415 + 637 \sqrt{5})$  \\
& $\frac{1+\sqrt{15}i}{4}$  & $-\frac{135}{2}(1415 - 637 \sqrt{5})$  \\
\hline

$-16$ & $2i$ & $287496$  \\
\hline
$-19$ & $\frac{1+\sqrt{19}i}{2}$  &  $-96^3$ \\
\hline
$-20$ & $\sqrt{5}i$ & $320 (1975 + 884 \sqrt{5})$  \\
& $\frac{1+\sqrt{5}i}{2}$ & $320 (1975 - 884 \sqrt{5})$\\
\hline
$-24$ & $\sqrt{6}i$ &  $1728 (1399 + 988 \sqrt{2})$ \\ 
&$\frac{\sqrt{6}i}{2}$ & $1728 (1399 - 988 \sqrt{2})$\\
\hline
$-27$ & $\frac{1+\sqrt{27}i}{2}$ & $-12288000$  \\
\hline
$-28$ & $\sqrt{7}i$ & $16581375$  \\
\hline
$-32$ & $2\sqrt{2}i$ &  $1000 (26125 + 18473\sqrt{2})$ \\
&$\frac{1+\sqrt{2}i}{2}$ &  $1000 (26125 - 18473\sqrt{2})$ \\
\hline
$-35$ & $\frac{1+\sqrt{35}i}{2}$  & $-163840 (360 + 161 \sqrt{5})$  \\
& $ \frac{1+\sqrt{35}i}{6}$  &  $-163840 (360 - 161 \sqrt{5})$  \\
\hline
$-36$ & $3i$ & $76771008+44330496\sqrt{3}$  \\
& 
$\frac{1+i}3$& $76771008-44330496\sqrt{3}$  \\
\hline
$-40$ & $\sqrt{10}i$  & $8640 (24635 + 11016 \sqrt{5})$  \\
& $\frac{\sqrt{10}i}{2}$  & $8640 (24635 - 11016 \sqrt{5})$  \\
\hline
$-43$ & $\frac{1+\sqrt{43}i}{2}$ & $-960^3$  \\
\hline
$-48$ & $2\sqrt{3}i$ &  $40500(35010+20213\sqrt{3})$\\
 & $\frac{\sqrt{3}i}{2}$ &  $40500(35010-20213\sqrt{3})$\\
\hline
$-51$ & $\frac{1+\sqrt{51}i}{2}$  & $-442368 (6263 + 1519 \sqrt{17})$  \\
&  $\frac{3+\sqrt{51}i}{6}$ & $-442368 (6263 - 1519 \sqrt{17})$  \\
\hline
$-52$ & $\sqrt{13}i$ &  $216000 (15965 + 4428 \sqrt{13})$ \\ 
&$ \frac{1+\sqrt{13}i}{2}$ & $216000 (15965 - 4428 \sqrt{13})$ \\
\hline
$-60$ & $\sqrt{15}i$ & $\frac{135}{2}(274207975 + 122629507 \sqrt{5})$ \\
& $\frac{\sqrt{15}i}{3}$ & $\frac{135}{2}(274207975 - 122629507 \sqrt{5})$ \\
\hline
$-64$ &$4i$  & $54 (761354780 + 538359129\sqrt{2})$  \\
 & $\frac{1+2i}{2}$ & $54 (761354780 - 538359129\sqrt{2})$  \\
\hline
$-67$ & $\frac{1+\sqrt{67}i}{2}$  & $-5280^3$  \\
\hline
$-72$ & $3\sqrt{2}i$ & $8000(23604673+9636536\sqrt 6)$\\
& $\frac{\sqrt{2}i}{3}$ & $8000(23604673-9636536\sqrt 6)$\\
\hline
$-75$ & $\frac{1+5\sqrt{3}i}{2}$& $-884736 (369830 + 165393\sqrt{5})$\\
& $\frac{3+\sqrt{3}i}{10}$& $-884736 (369830 - 165393\sqrt{5})$\\
\hline
$-88$ & $\sqrt{22}i$ &  $216000 (14571395 + 10303524 \sqrt{2})$ \\
& $\frac{\sqrt{22}i}{2}$ &$216000 (14571395 - 10303524 \sqrt{2})$ \\
\hline
$-91$ & $\frac{1+\sqrt{91}i}{2}$ &  $-884736 (5854330 + 1623699\sqrt{13})$ \\ 
& $\frac{3+\sqrt{91}i}{10}$ & $-884736 (5854330 - 1623699\sqrt{13})$  \\
\hline
$-99$ & $\frac{1+3\sqrt{11}i}{2}$ & $-180224 (104359189 + 18166603 \sqrt{33})$  \\
&  $\frac{-1+3\sqrt{11}i}{10}$ & $-180224 (104359189 - 18166603 \sqrt{33})$  \\
\hline
$-100$ & $5i$ & $1728 (12740595841 + 5697769392 \sqrt{5})$  \\
 & $\frac{1+5i}{2}$ & $1728 (12740595841 - 5697769392 \sqrt{5})$  \\
\hline
$-112$ & $2\sqrt{7}i$ & $3375 (40728492440 + 15393923181 \sqrt{7})$  \\
& $\frac{\sqrt{7}i}{2}$ & $3375 (40728492440 - 15393923181 \sqrt{7})$  \\
\hline
$-115$ & $\frac{1+\sqrt{115}i}{2}$ & $-4423680 (48360710 + 21627567 \sqrt{5})$  \\
&  $\frac{5+\sqrt{115}i}{10}$ & $-4423680 (48360710 - 21627567 \sqrt{5})$  \\
\hline
$-123$ & $\frac{1+\sqrt{123}i}{2}$  & $-110592000 (6122264 + 956137\sqrt{41})$  \\
& $\frac{3+\sqrt{123}i}{6}$  & $-110592000 (6122264 - 956137\sqrt{41})$  \\
\hline

$-147$ & $\frac{1+\sqrt{147}i}{2}$ & $-331776000 (52518123 + 11460394 \sqrt{21})$  \\
&  $\frac{3+\sqrt{147}i}{6}$ & $-331776000 (52518123 - 11460394 \sqrt{21})$  \\
\hline
$-148$ & $\sqrt{37}i$ & $216000 (91805981021 + 15092810460 \sqrt{37})$  \\
 &  $\frac{1+\sqrt{37}i}{2}$ & $216000 (91805981021 - 15092810460 \sqrt{37})$  \\
\hline
$-163$  & $\frac{1+\sqrt{163}i}{2}$ & $-640320^3$  \\
\hline
$-187$ & $\frac{1+\sqrt{187}i}{2}$ & $-940032000 (2417649815 + 586366209 \sqrt{17})$ \\
 & $\frac{-3+\sqrt{187}i}{14}$  & $-940032000 (2417649815 - 586366209 \sqrt{17})$ \\
\hline
$-232$ & $\sqrt{58}i$ & $216000 (1399837865393267 + 259943365786104\sqrt{29})$ \\
& $\frac{\sqrt{58}i}{2}$ & $216000 (1399837865393267 - 259943365786104\sqrt{29})$ \\
\hline
$-235$ & $\frac{1+\sqrt{235}i}{2}$ & $-5887918080 (69903946375 + 31261995198\sqrt{5})$ \\
& $\frac{5+\sqrt{235}i}{10}$  & $-5887918080 (69903946375 - 31261995198\sqrt{5})$ \\
\hline
$-267$ & $\frac{1+\sqrt{267}i}{2}$  & $-55296000 (177979346192125 +18865772964857 \sqrt{89})$  \\
 &  $\frac{3+\sqrt{267}i}{6}$  & $-55296000 (177979346192125 - 18865772964857 \sqrt{89})$  \\
\hline
$-403$ & $\frac{1+\sqrt{403}i}{2}$ & $-110592000 (11089461214325319155\ + $\\
&&$3075663155809161078 \sqrt{13})$  \\
& $\frac{-9+\sqrt{403}i}{22}$ & $-110592000 (11089461214325319155\ - $\\
&&$3075663155809161078 \sqrt{13})$  \\

\hline
$-427$ & $\frac{1+\sqrt{427}i}{2}$  & $-147197952000 (53028779614147702 \ +$\\
&&$ 6789639488444631 \sqrt{61})$  \\
&  $\frac{7+\sqrt{427}i}{14}$ & $-147197952000 (53028779614147702 \ -$\\
&&$ 6789639488444631 \sqrt{61})$  \\
\hline
\caption{$j$-values for imaginary quadratic orders with class number at most $2$}
\label{tab:jvalues}
\end{longtable}


\bibliographystyle{abbrv}
\bibliography{Ref}

 \address{Department of Mathematics, Louisiana State University, Baton Rouge, LA 70803, USA.}

 \email{\href{mailto:parora2@lsu.edu}{parora2@lsu.edu}}, {\url{https://sites.google.com/view/paresh-singh-arora/home}}

 \email{\href{mailto:kmonda1@lsu.edu}{kmonda1@lsu.edu}}, {\url{https://sites.google.com/view/kmondal}}

 \email{\href{mailto:ftu@lsu.edu}{ftu@lsu.edu}}, {\url{https://sites.google.com/view/ft-tu/}}

 \address{Institute of Science and Engineering, Kanazawa University, Kakuma, Kanazawa, 920-1192, JAPAN. }

 \email{\href{mailto:akio.nakagawa.math@icloud.com}{akio.nakagawa.math@icloud.com}} {\url{}}

\end{document}